\documentclass[12pt]{amsart}

\frenchspacing

\textwidth=13.5cm
\textheight=23cm
\parindent=16pt
\topmargin=-0.5cm



\usepackage{amsmath}
\usepackage{amsthm}
\usepackage{amssymb}
\usepackage{amscd}          			
\usepackage{bbm}            			
\usepackage{mathrsfs}           		
\usepackage{mathalfa}




\usepackage{verbatim}           		
\usepackage{enumitem}      		

\theoremstyle{plain}
\newtheorem{Theorem}{Theorem}

\newtheorem{theorem}[Theorem]{Theorem}
\newtheorem{proposition}[Theorem]{Proposition}

\newtheorem{corollary}[Theorem]{Corollary}

\newtheorem{lemma}[Theorem]{Lemma}

\theoremstyle{definition}
\newtheorem{Definition}[Theorem]{Definition}

\newtheorem{example}[Theorem]{Example}
\newtheorem{definition}[Theorem]{Definition}
\newtheorem{remark}[Theorem]{Remark}

\theoremstyle{remark}

\newcommand{\N}{\mathbb{N}}     
\newcommand{\R}{\mathbb{R}}     
 
\def\r{\R}





\newcommand{\calA}{\mathscr{A}}

\newcommand{\calC}{\mathscr{C}}

\newcommand{\calE}{\mathscr{E}}

\newcommand{\calK}{\mathscr{K}}

\newcommand{\calO}{\mathscr{O}}
\newcommand{\calP}{\mathscr{P}}



\DeclareMathOperator{\cl}{cl}				











\newcommand{\cs}{\calC_{s}}
\newcommand{\as}{\calA_{s}}

\newcommand{\ox}{\calO(X)}
\newcommand{\cx}{\calC(X)}
\newcommand{\kx}{\calK(X)}

\newcommand{\ocx}{\calO_{c}(X)}
\newcommand{\ccx}{\calC_{c}(X)}
\newcommand{\acx}{\calA_{c}(X)}

\newcommand{\csx}{\calC_{s}(X)}

\newcommand{\ossx}{\calO_{ss}(X)}


\newcommand{\bossx}{\calO_{ss}^{*}(X)}
\newcommand{\bcssx}{\calK_{ss}(X)}
\newcommand{\bassx}{\calA_{ss}^{*}(X)}

\newcommand{\bacx}{\calA_{c}^{*}(X)}
\newcommand{\bocx}{\calO_{c}^{*}(X)}
\newcommand{\bccx}{\calK_{c}(X)}
\newcommand{\bax}{\calA^{*}(X)}
\newcommand{\bcx}{\kx}

\newcommand{\bo}{\calO^{*}}

\newcommand{\basx}{\calA_{s}^{*}(X)}
\newcommand{\bcsx}{\calK_{s}(X)}
\newcommand{\ksx}{\calK_{s}(X)}
\newcommand{\bosx}{\calO_{s}^{*}(X)}
\newcommand{\bcox}{\calK_{0}(X)}
\newcommand{\kox}{\calK_{0}(X)}
\newcommand{\px}{\calP(X)}

\newcommand{\cox}{\calC_{0}(X)}

 


\renewcommand{\O}{\emptyset}



\def\sm{\setminus}
\def\cl{\overline}

\def\se{\subseteq}
\def\sc{\sqcup}
\def\bsc{\bigsqcup}
\def\bc{\bigcup}


\def\eps{\epsilon}

\def\la1{\lambda_1}
\def\la2{\lambda_2}
\def\la0{\lambda_{0}}
\def\la{\lambda}

\def\umn{\times}

\linespread{1.3}
\usepackage{times}

\begin{document}
\title{Solid-set functions and topological measures on locally compact spaces }
\author{S. V. Butler, University of California Santa Barbara} 
\date{August 11, 2018}
\subjclass[2010]{Primary 28C15; Secondary 28C99, 54E99, 54H99}
\keywords{topological measure, solid-set function, solid set, semi-solid set}
\maketitle

\begin{abstract}
A topological measure on a locally compact space is a set function on open and closed subsets which is finitely additive 
on the collection of open and compact sets, inner regular on open sets, and outer regular on closed sets. 
Almost all works devoted to topological measures, corresponding non-linear functionals, and their applications
deal with compact spaces. The present paper is one in a series that investigates topological measures and
corresponding non-linear functionals on locally compact spaces. Here we examine solid and semi-solid sets 
on a locally compact space. We then give a method of constructing topological measures from solid-set functions
on a locally compact, connected, locally connected space. The paper gives 
examples of finite and infinite topological measures on locally compact, non-compact spaces and 
presents an easy way to generate topological measures on
spaces  whose one-point compactification has genus 0 from existing examples of topological measures on compact spaces.
\end{abstract}

\section{Introduction}

The study of topological measures (initially called quasi-measures) began with papers by 
J. F. Aarnes \cite{Aarnes:TheFirstPaper}, \cite{Aarnes:ConstructionPaper}, and \cite{Aarnes:Pure}.
There are now many papers devoted to topological measures and corresponding non-linear functionals; 
their application to symplectic topology has been studied in numerous papers (beginning with \cite{EntovPolterovich}) 
and a monograph (\cite{PoltRosenBook}). 

To date, however, almost all these works deal with  topological  measures on compact spaces. 
In \cite{Aarnes:LC} J. F. Aarnes gives a definition of a topological measure on a locally compact space, 
presents a procedure for obtaining topological measures from solid set functions
on a locally compact, connected, locally connected space,  
and constructs some examples.  
While \cite{Aarnes:LC} contains many interesting ideas,  it is not entirely satisfactory. 
It contains incomplete proofs and sometimes asks the reader to adapt 
lengthy proofs from other papers to its subject matter. 
In addition, the approach in \cite{Aarnes:LC} makes heavy use of sets that are connected 
and co-connected (i.e. have connected complements). 
We do not think this is the right approach for the non-compact setting.
For example, using these sets one may end up constructing trivial topological measures (see Example 6.2 in \cite{Aarnes:LC}). 
Finally, the paper has never been published in a refereed mainstream journal. 

The construction technique employed by Aarnes for a compact space $X$ in \cite{Aarnes:ConstructionPaper} 
was later nicely simplified by 
D. J. Grubb, who used semi-solid sets in a compact space.
Grubb presented his elegant construction in a series of lectures in 1998, but, unfortunately, never published it. 
Influenced by ideas of Aarnes and Grubb, we have developed an approach for constructing topological measures 
on locally compact spaces. 
Instead of sets that are connected and have connected complements we use sets that are connected 
and whose complement has finitely many bounded and unbounded components. 
Our approach allows us 
to extend a solid set function (see Definition \ref{DeSSFLC})
to a topological measure on $X$ when $X$ is a locally compact, connected, 
and locally connected space; 
the restriction of a topological measure to solid sets with compact closure is a solid set function that 
uniquely determines the topological measure. 
We obtain an easy way to construct topological measures on non-compact locally compact spaces 
whose one-point compactification has genus 0. 
(See \cite{Aarnes:ConstructionPaper}, and section \ref{examplesTmLC} for more information about genus.)
Thus, we are able to produce a variety of topological measures on $\R^n$, half-spaces, punctured balls, etc..  

The paper is organized as follows. In section \ref{Prelim} we give necessary topological preliminaries. 
In section \ref{SolidSemisoid} we study the structure of solid and semi-solid sets. 
In section \ref{TM} we give a definition and basic properties of 
topological measures on locally compact spaces, and in section \ref{SSF} 
we do the same for solid-set functions. In section \ref{ExtBssKc}
on a locally compact, connected, and locally connected space
we extend a solid-set function from bounded solid sets to compact connected and bounded semi-solid sets. 
In section \ref{BCOX} the extension is done to the 
finite unions of disjoint compact connected sets, and in section \ref{ExttoTM} 
the extension produces a topological measure that is uniquely defined by 
a solid set function (see Theorem \ref{extThLC} and Theorem \ref{ExtUniq}). 
In section \ref{examplesTmLC} we give examples and present 
an easy way (Theorem \ref{tmXtoXha}) to generate topological measures on 
locally compact, connected, and locally connected spaces  whose one-point compactification has genus 0 
from existing examples of topological measures on compact spaces. 
In this paper by a component of a set we always mean a connected component. 
We denote by $\cl E$ the closure of a set $E$ 
and by $\partial E$ the boundary of $E$. 
We denote by $ \bsc$ a union of disjoint sets.

\begin{definition} \label{debddset}
A set $A \subseteq X$ is called bounded if $\cl A$ is compact. 
A set $A$ is solid if $A$ is  connected, and $X \sm A$ has only unbounded 
components.
A set $A$ is semi-solid if $A$ is connected, and $X \sm A $ has only finitely many components. 
\end{definition}

Several collections of sets will be used often.   They include:
$\ox$,  the collection of open subsets of   $X $;
$\cx$,  the collection of closed subsets of   $X $; and
$\kx$,  the collection of compact subsets of   $X $.
$\px$ is the power set of $X$.

Often we will work with open, compact or closed sets with particular properties.
We use subscripts $c, s$ or $ ss$ to indicate (open, compact, closed) sets that are, respectively,  connected, solid, or semi-solid. 
For example,
$ \ocx$   is the collection of open connected subsets of  $X$, and 
$ \bcsx$  is the collection of compact solid subsets of  $X$. 

Given any collection $\calE\subseteq\px$,
we denote by $\calE^*$ the subcollection of all bounded sets belonging to $\calE$.
For example,  
$\bax = \bcx \cup \bo(X)$ is the collection of compact and bounded open sets, and
$\bassx = \bcssx \cup \bossx $ is the collection of bounded open semi-solid and 
compact semi-solid sets. By $\kox$ we denote the collection of finite unions of disjoint compact connected sets.

\begin{definition}
A non-negative set function $ \mu$ on a family of sets that includes compact sets is called compact-finite if $  \mu(K) <\infty$ for 
each compact $K$.
A non-negative set function is called simple if it only assumes values $0$ and $1$. 
\end{definition}

We consider set functions that are not identically $\infty$.

\section{Preliminaries} \label{Prelim}

This section contains necessary topological preliminaries. 
Some results in this section are known, but sometimes we give proofs for the reader's convenience.  

\begin{remark} \label{netsSETS}
An easy application of compactness (see, for example, Corollary 3.1.5 in \cite{Engelking})  shows that
\begin{itemize}
\item[(i)]
If $K_\alpha \searrow K, K \se U,$ where $U \in \ox,\  K, K_\alpha \in \cx$, and $K$ and at least one of $K_\alpha$ are compact, 
then there exists $\alpha_0$ such that
$ K_\alpha \se U$ for all $\alpha \ge \alpha_0$.
\item[(ii)]
If $U_\alpha \nearrow U, K \se U,$ where $K \in \kx, \ U, \ U_\alpha \in \ox$ then there exists $\alpha_0$ such that
$ K \se U_\alpha$ for all $\alpha \ge \alpha_0$.
\end{itemize}
\end{remark}

\begin{remark} \label{OpenComp}
\begin{itemize}
\item[(a)]
Suppose$X$ is connected,  $ U \in \ocx$ and $F \in \ccx$ are disjoint sets. 
If $\cl U \cap F \ne \O$ then $U \sc F $ is connected.
\item[(b)]
If $X$ is locally compact and locally connected,
for each $x  \in X$ and each open set $U$ containing $x$, there is
a connected open set $V$ such that $ x \in V \se \cl V \se U $ and $\cl V$ is compact.
\item[(c)]
If $V = \bsc_{s \in S} V_s$ 
where $V$ and $V_s $ are open sets, then $\cl{ V_s} \cap V_t = \O$ for $ s \ne t$. 
In particular, if $X$ is locally connected, and $V = \bsc_{s \in S} V_s$ 
is a decomposition of an open set $V$  into connected components,  then all components $V_s$ are open, and 
$\cl{ V_s} \cap V_t = \O$ for $ s \ne t$. 
\end{itemize}
\end{remark}

\begin{lemma} \label{prelLemma}
Let $U$ be an open connected subset of a locally compact  
and locally connected
set $X$. Then for any $x, y \in U$ there is $ V_{xy}  \in \bocx$ such that
$ x,y \in V_{xy} \se \cl{ V_{xy}} \se U$.
\end{lemma}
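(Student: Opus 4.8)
The plan is to fix $x \in U$ and run a connectedness (clopen set) argument inside $U$. Define
\[
A = \{\, y \in U : \text{there is } V \in \bocx \text{ with } x,y \in V \text{ and } \cl V \se U \,\}.
\]
I will show that $A$ is nonempty, relatively open in $U$, and relatively closed in $U$; since $U$ is connected this forces $A = U$, and for an arbitrary $y$ the witnessing set $V \in \bocx$ is precisely the $V_{xy}$ demanded by the lemma. Note that $X$ itself need not be connected here: the whole argument takes place inside $U$ and uses only that $U$ is connected.

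The two ingredients are Remark \ref{OpenComp}(b), which at any point $p \in U$ produces a connected open set $W$ with $p \in W \se \cl W \se U$ and $\cl W$ compact (so $W \in \bocx$), together with the elementary facts that the union of two connected sets with a common point is connected, that a finite union of open (resp. bounded) sets is open (resp. bounded), and that $\cl{V \cup W} = \cl V \cup \cl W$. For nonemptiness, applying Remark \ref{OpenComp}(b) at $x$ gives $W \in \bocx$ with $x \in W$ and $\cl W \se U$, so $x \in A$. For openness, suppose $y \in A$ with witness $V$, and use Remark \ref{OpenComp}(b) at $y$ to get $W \in \bocx$ with $y \in W \se \cl W \se U$. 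For any $z \in W$ the set $V \cup W$ is open, connected (both $V$ and $W$ contain $y$), and bounded, with $\cl{V \cup W} = \cl V \cup \cl W \se U$; since it contains $x$ and $z$ we get $z \in A$, whence $W \se A$.

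For closedness in $U$, take $y \in U$ lying in the relative closure of $A$ and apply Remark \ref{OpenComp}(b) at $y$ to obtain $W \in \bocx$ with $y \in W \se \cl W \se U$. As $W$ is an open neighborhood of $y$ it meets $A$; choosing $y' \in W \cap A$ with witness $V$, the set $V \cup W$ is again open, connected (both contain $y'$), and bounded, has closure $\cl V \cup \cl W \se U$, and contains $x$ and $y$, so $y \in A$. Hence $A$ is clopen and nonempty in the connected set $U$, so $A = U$.

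I do not expect a genuine obstacle here: the content is carried almost entirely by Remark \ref{OpenComp}(b), with local connectedness supplying the connected neighborhoods $W$ and local compactness keeping their closures compact and inside $U$. The only point needing a little care is the repeated \emph{gluing} step---checking that each $V \cup W$ stays open, connected, and bounded with closure inside $U$---but this is immediate from the finite-union facts recorded above.
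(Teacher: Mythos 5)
Your proof is correct and takes essentially the same route as the paper: fix $x$, define $A$ as the set of points joined to $x$ by some $V \in \bocx$ with $\cl V \se U$, and show $A$ is nonempty and clopen in the connected set $U$, using Remark \ref{OpenComp}(b) to produce connected bounded neighborhoods and gluing them along a common point. The only cosmetic differences are that the paper verifies openness more directly (the witness $V_{xy}$ itself lies in $A$, no gluing needed there) and phrases closedness as openness of $U \setminus A$ rather than via the relative closure.
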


\begin{proof}
Fix $x \in U$. Let 
$$A = \{ y \in U :  \exists V_{xy}  \in \bocx \mbox{  such that  } 
x,y \in V_{xy} \se \cl{ V_{xy}} \se U \}.  $$
Clearly, $A$ is open, since if $y \in A$ then $V_{xy} \se A$.
The set $ U \sm A$ is also open, since if $y \in U \sm A$ then 
by Remark \ref{OpenComp} there exists $V \in \bocx$ 
such that  $y \in V \se \cl V \se U$.
In fact, $V \se U \sm A$. (Otherwise, if $ z \in V \cap A$ then 
$ V_{xz} \cup V$ is a bounded open connected set with 
$x,y \in V_{xz} \cup V  \se \cl{ V_{xz}} \cup \cl V \se U$,  i.e. $y \in A$.)
Thus, $ U \sm A$ is also open. Since $x  \in A$, we must have $A = U$.
\end{proof}

We would like to note the following fact. (See, for example, \cite{Dugundji}, Chapter XI, 6.2)

\begin{lemma} \label{easyLeLC}
Let $K \subseteq U, \ K \in \bcx,  \ U \in \ox$ in a locally compact space $X$.
Then there exists a set  $V \in \bo(X)$ such that 
$$ K \se V \se \cl V \se U. $$ 
\end{lemma}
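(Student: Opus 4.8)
The plan is to obtain $V$ by a finite-cover argument, reducing the statement to its one-point version. The essential input is the standard neighborhood-base characterization of local compactness in a Hausdorff space: for each point $x$ and each open set containing $x$ there is a bounded open neighborhood of $x$ whose closure is still contained in that open set. I would not invoke Remark \ref{OpenComp}(b) here, since that result assumes local connectedness, whereas the present lemma assumes only local compactness.

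\smallskip

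First I would record the pointwise case. Fix $x \in K$. Since $x \in U \in \ox$ and $X$ is locally compact, there is an open set $V_x$ with $x \in V_x \se \cl{V_x} \se U$ and $\cl{V_x}$ compact, i.e. $V_x \in \bo(X)$. If one wants this spelled out rather than quoted: choose an open $W \ni x$ with $\cl W$ compact (local compactness); then $\cl W$ is compact Hausdorff, hence regular, and $U \cap W$ is relatively open in $\cl W$ and contains $x$, so regularity separates $x$ from the relatively closed set $\cl W \sm (U \cap W)$, producing a set $V_x$ that is open in $\cl W$, contained in $W$, hence open in $X$, with $\cl{V_x} \se U \cap W \se U$ and $\cl{V_x} \se \cl W$ compact.

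\smallskip

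Next I would use compactness of $K$. The family $\{V_x : x \in K\}$ is an open cover of $K$, so finitely many of them, say $V_{x_1}, \dots, V_{x_n}$, already cover $K$; set $V = \bigcup_{i=1}^n V_{x_i}$. Then $V$ is open and $K \se V$. Because the union is finite, $\cl V = \bigcup_{i=1}^n \cl{V_{x_i}}$, which is compact (a finite union of compact sets) and contained in $U$ (each $\cl{V_{x_i}} \se U$). Hence $V \in \bo(X)$ and $K \se V \se \cl V \se U$, as required.

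\smallskip

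I expect no genuine obstacle: the only content is the pointwise neighborhood property, which is precisely the standard fact about locally compact Hausdorff spaces cited just above. The remainder is a routine finite-subcover argument, and the sole point deserving care is that passing to a \emph{finite} union is what keeps $\cl V$ simultaneously compact and inside $U$; an infinite union could fail both.
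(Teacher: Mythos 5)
Your proof is correct, and it coincides with the standard argument behind the reference the paper cites in lieu of a proof (Dugundji, Chapter XI, 6.2): the paper simply quotes this as a known fact, and the pointwise neighborhood property plus a finite subcover is exactly how that cited result is established. Your care on the two delicate points — using Hausdorffness (via regularity of the compact set $\cl W$) for the pointwise case, and insisting on a \emph{finite} union so that $\cl V = \bigcup_{i=1}^n \cl{V_{x_i}}$ stays compact and inside $U$ — is exactly right, and your observation that Remark \ref{OpenComp}(b) is unavailable here (it assumes local connectedness) shows proper attention to hypotheses.
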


\noindent
In the spirit of this result we can say more, given connectedness.

\begin{lemma} \label{LeConLC}
Let $X$ be a locally compact, locally connected space, $K \subseteq U, \ K \in \bcx,  \ U \in \ox$.
If either $K$ or $U$ is connected there exist a set  $V \in \bocx$  and a set $C \in \bccx$
such that 
$$ K \se V \se C \se U. $$ 
One may take $C = \cl V$.
\end{lemma}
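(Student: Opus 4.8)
The plan is to reduce everything to the case where $U$ is connected, then to cover the compact set $K$ by finitely many connected open sets with compact closures inside $U$, and finally to link these pieces into a single connected set by means of the ``bridges'' supplied by Lemma \ref{prelLemma}.

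First I would dispose of the hypothesis that $K$ is connected. Since $X$ is locally connected, the components of the open set $U$ are open by Remark \ref{OpenComp}(c). A connected subset of $U$ lies in a single component, so if $K$ is connected it is contained in one component $U_0$ of $U$, and $U_0$ is open and connected with $K \se U_0 \se U$. Replacing $U$ by $U_0$, any $V$ and $C$ found with $K \se V \se C \se U_0$ automatically satisfy $C \se U$. Hence in every case it suffices to treat a compact $K$ sitting inside a \emph{connected} open set $U$.

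Next, for each $x \in K$, Remark \ref{OpenComp}(b) provides a connected open set $V_x$ with $x \in V_x \se \cl{V_x} \se U$ and $\cl{V_x}$ compact. As $K$ is compact, finitely many of these, say $V_{x_1}, \dots, V_{x_n}$, cover $K$; together they already form a bounded open set containing $K$ whose closure lies in $U$, but it may be disconnected. To remedy this, fix the point $x_1$, and for each $i = 2, \dots, n$ use the connectedness of $U$ together with Lemma \ref{prelLemma} to obtain a set $W_i \in \bocx$ with $x_1, x_i \in W_i \se \cl{W_i} \se U$. Then set
$$ V = \bigcup_{i=1}^{n} V_{x_i} \;\cup\; \bigcup_{i=2}^{n} W_i. $$
This is a finite union of open sets, hence open, and $K \se \bigcup_{i} V_{x_i} \se V$. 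For each $i \ge 2$ the set $V_{x_i} \cup W_i$ is connected, being the union of two connected sets sharing the point $x_i$, and it contains $x_1$ because $x_1 \in W_i$; together with $V_{x_1}$ (which also contains $x_1$) this exhibits $V$ as a union of connected sets all passing through the common point $x_1$, so $V$ is connected and $V \in \bocx$. Taking $C = \cl V$, the identity $\cl V = \bigcup_{i=1}^{n}\cl{V_{x_i}} \cup \bigcup_{i=2}^{n}\cl{W_i}$ for finite unions shows that $\cl V$ is a finite union of compact sets contained in $U$, hence compact with $\cl V \se U$; moreover $\cl V$ is connected as the closure of the connected set $V$. Thus $C = \cl V \in \bccx$ and $K \se V \se C \se U$.

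The construction involves no real estimates; the only points needing care are the connectedness of $V$, which is the common-point linking argument above, and the requirement that the bridges $W_i$ stay inside $U$. The latter is exactly what Lemma \ref{prelLemma} guarantees, so I expect the main work to be the bookkeeping of the cover-and-bridge assembly rather than any genuine obstacle.
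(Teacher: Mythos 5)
Your proof is correct, and its core construction is the same as the paper's: cover $K$ by finitely many sets of $\bocx$ with compact closures inside $U$, link them with the bridges $W_i$ supplied by Lemma \ref{prelLemma}, and take $C = \cl V$. The one structural difference is how the case of connected $K$ is handled. You fold it into the connected-$U$ case by passing to the component $U_0$ of $U$ containing $K$, which is open by local connectedness --- a legitimate and clean reduction that leaves you with a single unified argument. The paper instead treats connected $K$ as a separate case and handles it more directly: there, no bridges are needed at all, since each $K \cup V_{x_i}$ is connected (they share the point $x_i$) and $V = \bigcup_i V_{x_i} = \bigcup_i (K \cup V_{x_i})$ is therefore already connected, so Lemma \ref{prelLemma} is invoked only when it is $U$ that is connected. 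What your route buys is economy of cases; what the paper's buys is that the connected-$K$ case is self-contained and does not depend on Lemma \ref{prelLemma}. Both yield $V \in \bocx$ and $C = \cl V \in \bccx$ with $K \se V \se C \se U$ exactly as required.
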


\begin{proof}
Case 1:  $ K \in \bccx$.   For each $ x \in K$ 
by Remark \ref{OpenComp} there is $ V_x \in \bocx$ such that
$ x \in V_x \se \cl{ V_x} \se U$. By compactness of $K$, we may write 
$ K \se V_{x_1} \cup \ldots \cup  V_{x_n}$.
Since both $K$ and $V_{x_i} $ are connected and $ x_i \in K \cap  V_{x_i}$, 
$\, K \cup V_{x_i}$ is connected for each $ i =1, \ldots, n$.
Hence,  
$$ V = \bigcup\limits_{i=1}^n V_{x_i} =  \bigcup\limits_{i=1}^n (K \cup V_{x_i}) $$
is a bounded open connected set for which 
$$ K \se V \se \cl V \se \bigcup_{i=1}^n \cl{ V_{x_i}} \se U. $$
Take $C = \cl V$. \\
Case 2: $ U  \in \ocx$. 
As in Case 1 we may find   $V_1, \ldots, V_n \in \bocx$
such that
$$ K \se V_1 \cup  \ldots  \cup V_n \se \cl{ V_1} \ldots \cup \cl{V_n} \se U .$$
Pick $ x_i \in V_i$ for  $i=1, \ldots, n$.  By Lemma \ref{prelLemma} choose
$W_i \in \bocx$ with $ x_1, \ x_i \in W_i \se \cl{ W_i}  \se U$ for $ i=2, \ldots, n$.
Then the set $V_1 \cup W_i \cup V_i$ is connected for each $ i =2, \ldots, n$.
Then 
$$ V = \bc_{i=1}^n V_i \cup \bc_{i=2}^n W_j = \bc_{i=2}^n (V_1 \cup W_i \cup V_i)$$
is  open connected and 
$$ K \se \bigcup_{i=1}^n V_i \se V \se \cl V \se \bigcup_{i=1}^n \cl{ V_i} \cup 
\bigcup_{i=2}^n \cl{ W_i} \se U.$$
Again, let $C = \cl V$.
\end{proof}

\begin{lemma} \label{LeCCoU}
Let $X$ be a locally compact, locally connected space. Suppose $K \se U, \ K \in \bcx, \ U \in \ox$. 
Then there exists $C \in \bcox$ such that $ K \se C \se U$.
\end{lemma}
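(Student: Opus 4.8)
The plan is to trap $K$ between a bounded open set and $U$ by combining local compactness with local connectedness, and then to show that the \emph{closure} of that open set is already the finite disjoint union of compact connected sets we want. First I would apply Remark \ref{OpenComp}(b): for each $x \in K$ there is a connected open $V_x$ with $x \in V_x \se \cl{V_x} \se U$ and $\cl{V_x}$ compact. Since $K$ is compact, finitely many of the $V_x$ cover it, say $K \se V_{x_1} \cup \cdots \cup V_{x_n} =: V$. Then $V$ is open with $K \se V$, and $\cl V = \bc_{i=1}^n \cl{V_{x_i}}$ is a finite union of compact sets each contained in $U$; hence $\cl V$ is compact and $\cl V \se U$. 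Thus $K \se V \se \cl V \se U$, and the candidate is $C = \cl V$.

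Next I would analyze the components of $V$. Because $X$ is locally connected and $V$ is open, each connected component of $V$ is open (Remark \ref{OpenComp}(c)), and each connected set $V_{x_i}$ lies in a single component; since there are only $n$ of the $V_{x_i}$ and each component meets at least one of them, $V$ has only finitely many components $W_1, \ldots, W_m$. Consequently $\cl V = \bc_{j=1}^m \cl{W_j}$ is a finite union of compact connected sets.

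The step I expect to be the real obstacle is \emph{disjointness}: the components $W_j$ are pairwise disjoint, but their closures may meet along $\partial V$, so $\{\cl{W_j}\}$ need not be a disjoint family, and taking the $\cl{W_j}$ directly fails. To circumvent this I would not use the $\cl{W_j}$ themselves but instead pass to the connected components of the compact set $\cl V$. Each $\cl{W_j}$ is connected and hence lies in a single component of $\cl V$; since $\cl V = \bc_{j=1}^m \cl{W_j}$, every component of $\cl V$ is a union of some of the finitely many sets $\cl{W_j}$, so $\cl V$ has only finitely many components $C_1, \ldots, C_p$. These are pairwise disjoint (components always are), connected by definition, and closed in the compact set $\cl V$ and therefore compact. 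This exhibits $C = \cl V = \bsc_{k=1}^p C_k$ as a finite union of disjoint compact connected sets, i.e. $C \in \bcox$, with $K \se C \se U$, as required.
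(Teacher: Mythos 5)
Your proof is correct, but it takes a genuinely different route from the paper's. The paper decomposes $U$ itself into its connected components $U_i$ (open, by local connectedness), notes that finitely many of them cover $K$ and that each $K \cap U_i$ is compact, and then invokes Lemma \ref{LeConLC} inside each $U_i$ to produce a compact connected $C_i$ with $K \cap U_i \se C_i \se U_i$; disjointness of the $C_i$ is then automatic because they sit inside the pairwise disjoint components $U_i$, so the disjointness issue you wrestle with never arises. Your argument instead ignores the component structure of $U$, builds a bounded open $V$ with $K \se V \se \cl V \se U$ directly from Remark \ref{OpenComp}(b), and makes $C = \cl V$ work by passing to the connected components of the compact set $\cl V$ — correctly diagnosing that the closures $\cl{W_j}$ of the components of $V$ may overlap, and correctly repairing this, since each $\cl{W_j}$ lies in a single component of $\cl V$, so $\cl V$ has at most finitely many components, each compact (closed in a compact set), connected, and pairwise disjoint. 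What each approach buys: the paper's proof is shorter because it reuses Lemma \ref{LeConLC}, and it produces pieces compatible with the components of $U$; yours is more self-contained (it needs only Remark \ref{OpenComp} and elementary facts about components) and yields slightly more, namely that $C$ can be taken to be the closure of an open neighborhood of $K$, so that $K \se \inte C$.
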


\begin{proof}
Let $U = \bsc_{i \in I'} U_i$ be the decomposition  into connected components. Since
$X$ is locally connected, each $U_i$ is open, and by compactness of $K$ there exists 
a finite set $I \se I'$ such that $K \se \bsc_{i \in I} U_i$.  
Then $ K \cap U_i  = K \sm  \bsc\limits_{j \in I, \ j \ne i} U_j$
is a compact set. For each $i \in I$ by Lemma \ref{LeConLC} 
choose $C_i \in \bccx$ such that $K \cap U_i \se C_i \se U_i$. 
The set $C = \bsc_{i \in I} C_i$ is the desired set. 
\end{proof}

\begin{lemma} \label{CmpCmplBdA}
Let $X$ be a connected, locally connected space.
Let $ A \in \acx$ and 
let $B$ be a component of $X \sm A$. Then
\begin{itemize}
\item[(i)]
If $A$ is open then $B$ is closed and $\overline{A} \cap B  \neq \O$. 
\item[(ii)]
If $A$ is closed then $B$ is open and  $A \cap \overline{B} \neq \O.$ 
\item[(iii)]
$A \sqcup \bsc\limits_{s \in S} B_s$ is connected for 
any family $\{ B_s\}_{s \in S} $ of components of $ X \sm A$.
\item[(iv)]
 $B$  is connected and co-connected.
\end{itemize}
\end{lemma}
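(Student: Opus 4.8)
The plan is to establish (i) and (ii) first, then deduce (iii) from them by a standard union-of-connected-sets argument, and finally read off (iv) as an immediate special case of (iii). Throughout I use the convention that a connected set is nonempty, so $A \ne \O$ and each component $B$ is nonempty; since $B \se X \sm A$, this also forces $B \ne X$, which is what makes the connectedness of $X$ bite.

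For (i), when $A$ is open the set $X \sm A$ is closed, and a component of any space is closed in that space, so $B$ is closed in $X \sm A$ and hence closed in $X$. For the incidence claim $\cl A \cap B \ne \O$ I would argue by contradiction: if $\cl A \cap B = \O$, then each $x \in B$ lies in the open set $X \sm \cl A$, so by local connectedness (Remark \ref{OpenComp}(c)) the component of $X \sm \cl A$ containing $x$ is an open connected set $W$ with $x \in W \se X \sm A$; since $B$ is the component of $X \sm A$ containing $x$, we get $W \se B$, so $B$ is open. Then $B$ is a nonempty proper clopen subset of $X$, contradicting connectedness. For (ii), when $A$ is closed the set $X \sm A$ is open, so by Remark \ref{OpenComp}(c) its components are open and $B$ is open. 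If $A \cap \cl B = \O$, then $\cl B$ is connected, contains $B$, and is disjoint from $A$, hence is a connected subset of $X \sm A$; maximality of the component $B$ gives $\cl B = B$, so $B$ is also closed, again a nonempty proper clopen set, contradicting connectedness.

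For (iii) I would first recall the elementary fact that if $C$ and $D$ are connected and $\cl C \cap D \ne \O$, then $C \cup D$ is connected, and the same conclusion holds if instead $C \cap \cl D \ne \O$. Applying this with $C = A$ and $D = B_s$: in the open case (i) gives $\cl A \cap B_s \ne \O$, and in the closed case (ii) gives $A \cap \cl{B_s} \ne \O$, so in either situation $A \cup B_s$ is connected for every $s \in S$. Since all of these sets contain the common nonempty connected set $A$, their union $A \sc \bsc_{s \in S} B_s = \bigcup_{s \in S}(A \cup B_s)$ is connected.

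Finally (iv) is immediate: $B$ is connected by definition of a component, and writing $X = A \sc \bsc_{t} B_t$ as the disjoint decomposition of $X$ into $A$ and the components of $X \sm A$, with $B = B_{t_0}$, we have $X \sm B = A \sc \bsc_{t \ne t_0} B_t$, which is of the form covered by (iii) and hence connected, so $B$ is co-connected. I expect the only genuinely nontrivial steps to be the two incidence claims in (i) and (ii); the crux there is that connectedness of $X$ forbids $B$ from being clopen, while local connectedness is exactly what upgrades ``$B$ is separated from $\cl A$'' into ``$B$ is open.''
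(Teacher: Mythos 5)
Your proof is correct and follows essentially the same route as the paper: the incidence facts in (i) and (ii) (which the paper leaves as "not difficult") feed the union-of-connected-sets argument for (iii) (the paper's Remark \ref{OpenComp}), and (iv) is read off from (iii) via $X \sm B_t = A \sc \bsc_{s \ne t} B_s$ exactly as in the paper. Your filled-in details for (i) and (ii) — local connectedness making components of open sets open, and connectedness of $X$ forbidding a nonempty proper clopen component — are the intended ones.
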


\begin{proof}
The proof of the first two parts is not difficult. For the third part, observe that by Remark \ref{OpenComp} $A \bsc B$ is 
connected for each component $B$ of $X \sm A$. To prove the last part,
let $X \sm A = \bsc_{s \in S}  B_s$ be a decomposition 
into connected components. For each $ t \in S$ connected component $B_t$ is also
co-connected, because
$$ X \sm B_t =A  \sc \bsc_{s \ne t}  B_s $$ 
is a connected set  by the previous part. 
\end{proof} 

\begin{lemma} \label{LeAaCompInU}
Let $X$ be a connected, locally connected space.
Let $K \in \bcx, \ K \subseteq U \in \bocx$. Then at most a finite number of connected components of 
$X \setminus$ K are not contained in $U.$
\end{lemma}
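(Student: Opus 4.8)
The plan is to use the compact boundary $\partial U = \cl U \sm U$ as a ``gate'': I will show that every component of $X \sm K$ that is not contained in $U$ is forced to meet $\partial U$, while only finitely many components can meet $\partial U$ at all. The boundedness of $U$ is what makes this work: since $\cl U$ is compact, its closed subset $\partial U$ is compact, and because $K \se U$ with $U$ open we have $\partial U \cap K = \O$, so $\partial U \se X \sm K$.

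First I would record the two structural facts I need about the components of $X \sm K$. Since $X$ is locally connected and $K$ is closed (being compact), $X \sm K$ is open and each of its components $B$ is open; moreover $\partial B \se K$, for any point of $\cl B \sm B$ lying outside $K$ would sit in its own open component of $X \sm K$, which would then meet $B$ and hence equal it, a contradiction. I would also note that if $K \ne \O$ then no such $B$ can equal $X$, so $B$ is a proper nonempty subset of the connected space $X$ and therefore $\partial B \ne \O$ (the case $K = \O$ is trivial: $X \sm K = X$ is the single component). Next, because the components of $X \sm K$ are open and their union contains the compact set $\partial U$, finitely many of them cover $\partial U$, and since distinct components are disjoint this means only finitely many components meet $\partial U$. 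Thus it suffices to prove that every component $B$ with $B \not\se U$ meets $\partial U$.

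The heart of the matter — and the step I expect to be the main obstacle — is ruling out a component that fails to be contained in $U$ yet never reaches $\partial U$, i.e.\ one that ``floats'' entirely inside $X \sm \cl U$. Here the inclusion $\partial B \se K \se U$ does the decisive work: since $\partial B \ne \O$ we get $\cl B \cap U \ne \O$, and as $U$ is open this forces $B \cap U \ne \O$ (an open set meeting $\cl B$ already meets $B$). So such a $B$ meets $U$, while by hypothesis $B \not\se U$ means $B$ also meets $X \sm U$. Writing the disjoint decomposition $X \sm K = (U \sm K) \sc \partial U \sc (X \sm \cl U)$, in which $U \sm K$ and $X \sm \cl U$ are disjoint open sets, the assumption $B \cap \partial U = \O$ would split the connected set $B$ into its nonempty intersections with these two opens — a contradiction. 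Hence $B \cap \partial U \ne \O$. Combining this with the finiteness furnished by the covering argument shows that at most finitely many components of $X \sm K$ are not contained in $U$, which is the claim.
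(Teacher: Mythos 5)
Your proof is correct, and it shares the paper's central idea: every component of $X \sm K$ not contained in $U$ must meet the compact set $\partial U$, and compactness of $\partial U$ then caps the number of such components. The two proofs diverge in how each half is executed. For the first half, the paper quotes its Lemma \ref{CmpCmplBdA} to get $\cl{W_s} \cap K \ne \O$ for every component $W_s$ of $X \sm K$ (so every component meets $U$), whereas you prove the equivalent boundary facts $\partial B \se K$ and $\partial B \ne \O$ from scratch; this is the same clopen-set reasoning, but your self-contained version has the small advantage that Lemma \ref{CmpCmplBdA} is stated for \emph{connected} $A$, while the $K$ here need not be connected, so your inline argument sidesteps a slight mismatch in that citation. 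For the second half --- the actual finiteness --- the mechanisms genuinely differ: the paper argues by contradiction, selecting points $x_i \in W_i \cap \partial U$ from infinitely many distinct components, extracting an accumulation point $x_0 \in \partial U \se X \sm K$ by compactness, and noting that the open component containing $x_0$ must then contain $x_i$ for infinitely many $i$, violating disjointness. You instead observe that the disjoint open components of $X \sm K$ actually \emph{cover} $\partial U$ (since $K \se U$ forces $\partial U \cap K = \O$), take a finite subcover, and conclude by disjointness that only those finitely many components can touch $\partial U$ at all. Your covering argument buys directness: no contradiction, no sequences or cluster points, and it isolates exactly where compactness enters. It also makes transparent a fact true of both proofs but exploited by neither: compactness of $K$ is never used beyond closedness, so the lemma holds verbatim for any closed $K \se U$.
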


\begin{proof}
Let $ X  \sm K = \bsc_{s \in S} W_s$  be the decomposition of $ X \sm K$ 
into connected components.
Note that  each component $ W_s$ intersects $U$ since otherwise we would have 
$W_s \se X \sm U$, so $\cl{ W_s} \se X \sm U$, so $\cl{ W_s} \cap K =\O$, which contradicts 
Lemma \ref{CmpCmplBdA}.
Assume that there are infinitely many components of $X \sm K$ that are not contained in $U$.
Then we may choose components $W_i, \ i=1, 2, \ldots$,  such that $W_i \cap U \neq\O$ and 
$W_i \cap (X \sm U) \neq \O$ for each $i$. 
Connectivity of $W_i$ implies that $ W_i \cap \partial U \neq \O$ for each $i$.
Let $x_i \in W_i \cap \partial U$. By compactness of $\partial U$, let $x_0 \in \partial U$ 
be the limit point of $(x_i)$. Then $x_0  \in X \sm U \se X \sm K = \bsc_{s \in S} W_s$, 
i.e. $x_0 \in W_t $ for some $t \in S$. But then all but finitely many $x_i$ must also 
be in $W_t$, which is impossible, since $W_i \cap W_t =\O$ for $t \neq i$. 
\end{proof}

\begin{corollary} \label{CoBddComp}
Let $X$ be a connected, locally connected space.
Let $K \in \bcx$ and let $W$ be the union of bounded components of $X \sm K$. 
Then $W \in \bo(X)$.
\end{corollary}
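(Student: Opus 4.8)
The plan is to verify separately that $W$ is open and that $W$ is bounded. Openness is immediate from local connectedness: since $K$ is compact it is closed, so $X \sm K$ is open, and because $X$ is locally connected each connected component of $X \sm K$ is open (as recorded in Remark \ref{OpenComp}(c)). As $W$ is by definition a union of such components, it is open. The substance of the statement is therefore boundedness, i.e. that $\cl W$ is compact.

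First I would trap $K$ inside a bounded open connected set. Using local compactness and local connectedness, Remark \ref{OpenComp}(b) (with the ambient open set taken to be $X$) gives for each $x \in K$ a set $V_x \in \bocx$ with $x \in V_x$; by compactness finitely many $V_{x_1}, \dots, V_{x_n}$ cover $K$, and I would join these pieces into a single set $U \in \bocx$ with $K \se U$ exactly as in Case 2 of the proof of Lemma \ref{LeConLC}, connecting them through bounded open connected sets supplied by Lemma \ref{prelLemma}. With such a $U$ secured, Lemma \ref{LeAaCompInU} applies and tells us that all but finitely many components of $X \sm K$ are contained in $U$.

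Next I would split the bounded components of $X \sm K$ into two groups. Every component $B \se U$ satisfies $\cl B \se \cl U$, which is compact, so all such components are automatically bounded and their union lies in $\cl U$. Each remaining bounded component is not contained in $U$, hence lies among the finitely many exceptional components produced by Lemma \ref{LeAaCompInU}; call them $B_1, \dots, B_m$, each with compact closure. Consequently $W \se \cl U \cup \cl{B_1} \cup \dots \cup \cl{B_m}$, a finite union of compact sets and so compact, whence $\cl W$ is a closed subset of a compact set and is itself compact, giving $W \in \bo(X)$. I expect the only genuine obstacle to be the enclosure step: Lemma \ref{LeAaCompInU} is stated for a \emph{single} bounded open connected superset of $K$, yet $K$ need not be connected, so the cover-and-join construction of $U$ is the one place where care is needed; once $U$ is in hand, the finiteness conclusion makes the boundedness of $W$ routine.
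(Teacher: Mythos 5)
Your proof is correct and follows essentially the same route as the paper: trap $K$ inside a bounded open connected set, invoke Lemma \ref{LeAaCompInU} to reduce boundedness of $W$ to finitely many exceptional components, and get openness of $W$ from local connectedness (the paper cites Lemma \ref{CmpCmplBdA} where you cite Remark \ref{OpenComp}(c)). The one step you flag as delicate --- constructing the connected enclosure when $K$ is disconnected --- is exactly what the paper dispatches in one line by applying Lemma \ref{LeConLC} with ambient open set $U = X$, which is connected by hypothesis (its Case~2 allows disconnected $K$), so your hand-rolled cover-and-join construction is already packaged in that lemma.
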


\begin{proof}
By Lemma \ref{LeConLC} pick $V \in \bocx$ such that 
$ K \se V$. From Lemma  \ref{LeAaCompInU} it follows that $W$ is bounded. By Lemma 
\ref{CmpCmplBdA} $W$ is open.
\end{proof}

\begin{remark}  \label{ReUnbddComp}
If $A \se B, \ B \in \bax$ then $ X \sm B \se X \sm A$ and  each unbounded component 
of $X \sm B$ is contained in an unbounded component of $ X \sm A$.
\end{remark}

\begin{lemma} \label{LeUnbddComp}
Let $X$ be a connected, locally connected space.
Assume $ A \se B, \ B \in \bax$. Then each unbounded component of $X \sm B$ 
is contained in an unbounded component of $ X \sm A$ and each unbounded component
of $ X \sm A$ contains an unbounded component of $X \sm B$.
\end{lemma}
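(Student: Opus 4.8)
The first assertion is precisely Remark \ref{ReUnbddComp}, so I would only need to establish the second one: that every unbounded component $D$ of $X \sm A$ contains an unbounded component of $X \sm B$. The plan is to route the argument through the compact set $\cl B$, which is available because $B \in \bax$ is bounded, and to exploit the fact that, up to a bounded remainder, deleting $\cl B$ cannot break $D$ into purely bounded pieces.

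First I would fix an unbounded component $D$ of $X \sm A$ and record the inclusions $X \sm \cl B \se X \sm B \se X \sm A$, which follow from $A \se B \se \cl B$. The key structural observation is that each component of $X \sm \cl B$ is a connected subset of $X \sm A$, hence lies inside a single component of $X \sm A$; consequently every such component is either contained in $D$ or disjoint from $D$, so that $D \sm \cl B = D \cap (X \sm \cl B)$ is exactly the union of those components of $X \sm \cl B$ that lie in $D$. The same ``connected subset meeting a component is contained in it'' reasoning will be used again at the end to promote a component of $X \sm \cl B$ to one of $X \sm B$.

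Next I would check that $D \sm \cl B$ is unbounded: since $D$ is unbounded and $\cl B$ is compact, writing $D = (D \cap \cl B) \sqcup (D \sm \cl B)$ with the first piece bounded forces the second to be unbounded. I then invoke Corollary \ref{CoBddComp} with $K = \cl B$, so that the union $W$ of all bounded components of $X \sm \cl B$ is bounded. Splitting $D \sm \cl B$ into its bounded components (all contained in $W$) and its unbounded ones, the unboundedness of $D \sm \cl B$ forces at least one unbounded component $G$ of $X \sm \cl B$ with $G \se D$.

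Finally I would transfer $G$ from $X \sm \cl B$ to $X \sm B$. Since $G \se X \sm \cl B \se X \sm B$ is connected, it lies in a single component $C$ of $X \sm B$; then $C$ is unbounded because it contains the unbounded set $G$, and $C \se D$ because $C$ is a connected subset of $X \sm A$ meeting the component $D$. This $C$ is the desired unbounded component of $X \sm B$ inside $D$. The main obstacle is ruling out the scenario in which $D \sm \cl B$ is unbounded while every individual component of $X \sm \cl B$ inside $D$ is bounded; this is exactly the point where Corollary \ref{CoBddComp} is indispensable, since it bounds the whole family of bounded components simultaneously rather than one component at a time.
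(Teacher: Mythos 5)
Your proposal is correct and follows essentially the same route as the paper: both reduce to the compact set $\cl B$ (the paper phrases this as choosing $K \in \calK(X)$ with $A \se B \se K$), both use Corollary \ref{CoBddComp} to bound the union of bounded components of the complement of that compact set, and both then promote the resulting unbounded component of $X \sm \cl B$ to an unbounded component of $X \sm B$ sitting inside the given component of $X \sm A$. The only cosmetic difference is that the paper argues by contradiction (an unbounded component of $X \sm A$ containing no unbounded component of $X \sm K$ would lie in the bounded set $K \cup W$), whereas you argue directly that $D \sm \cl B$ is unbounded and so must meet an unbounded component.
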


\begin{proof}
Suppose first that $ A \se K, \ K \in \bcx$. 
The first assertion is Remark \ref{ReUnbddComp}. Now let $E$ be an unbounded component of 
$X \sm A$ which contains no unbounded components of $X \sm K$. Then $E$ is contained 
in the union of $K$ and all bounded components of $X \sm K$. 
By Corollary  \ref{CoBddComp} this union is a bounded set, and so is $E$, which leads to a contradiction. Therefore, 
each unbounded component of $X \sm A$ must contain an unbounded component of $X \sm K$.

Now suppose $ A \se B, \ B \in \bax$. Choose $K \in \bcx$ such that $ A \se B \se K$. Let $E$ be an unbounded component of 
$X \sm A$. By the previous part, $ E$ contains an unbounded component $Y$ of  $ X \sm K$. But $Y \se G$ for some 
unbounded component $G$ of $ X \sm B$. Then $G \se E$. 
\end{proof}

\begin{lemma} \label{LeNoUnbdComp}
Let $X$ be locally compact, locally connected.
Let $A \in \bax$. Then the number of unbounded components of $ X \setminus A$ is finite.
\end{lemma}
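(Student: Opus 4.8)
The plan is to reduce to the case of a compact set and then combine Lemma \ref{LeAaCompInU} with a boundedness observation. First I would set $K = \cl A$; since $A \in \bax$ is bounded, $K$ is compact, so $K \in \bcx$, and $A \se K$. Applying Lemma \ref{LeUnbddComp} to the inclusion $A \se K$, every unbounded component of $X \sm A$ contains an unbounded component of $X \sm K$. Because distinct components of $X \sm A$ are disjoint, choosing for each such component $E$ one unbounded component of $X \sm K$ lying inside $E$ defines an injection from the unbounded components of $X \sm A$ into those of $X \sm K$. Hence it suffices to bound the number of unbounded components of $X \sm K$, i.e.\ to prove the lemma when $A$ is compact.

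Next I would enclose $K$ in a bounded connected open set. Since $X$ is connected, locally compact, and locally connected, Lemma \ref{LeConLC} applied to $K \se X$ (taking $U = X$, which is connected and open) yields $V \in \bocx$ with $K \se V$; this is exactly the move used in the proof of Corollary \ref{CoBddComp}. Now I apply Lemma \ref{LeAaCompInU} to this $K$ with $U = V$: only finitely many connected components of $X \sm K$ fail to be contained in $V$. The crux is that any component $W$ of $X \sm K$ with $W \se V$ is automatically bounded, since $\cl W \se \cl V$ and $\cl V$ is compact. Consequently every unbounded component of $X \sm K$ must be one of the finitely many components not contained in $V$, so there are only finitely many of them. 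Combined with the reduction of the first paragraph, this proves the lemma.

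The point I expect to be the genuine content is the observation in the second paragraph: being \emph{contained in the bounded set $V$} forces a component to be bounded, so the unbounded components are trapped among the finitely many that escape $V$, whose finiteness is guaranteed by Lemma \ref{LeAaCompInU}. The remaining steps are routine, provided the direction of Lemma \ref{LeUnbddComp} is used correctly (it is the unbounded components of the complement of the \emph{smaller} set that contain unbounded components of the complement of the \emph{larger} set, which is what makes the assignment an injection). I would also flag that the argument relies essentially on connectedness of $X$, which is needed to invoke Lemmas \ref{LeUnbddComp}, \ref{LeConLC}, and \ref{LeAaCompInU}; without it the statement fails (for instance, a disjoint union of infinitely many lines with $A$ a single point has infinitely many unbounded complementary components), so I read connectedness as a standing hypothesis here.
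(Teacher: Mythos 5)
Your proof is correct and follows essentially the same route as the paper's: the compact case is handled by enclosing the compact set in some $V \in \bocx$ via Lemma \ref{LeConLC} and invoking Lemma \ref{LeAaCompInU} (with the observation that components contained in the bounded $V$ are bounded), and the bounded open case is reduced to the compact case of $\cl A$ via Lemma \ref{LeUnbddComp}; you merely present the reduction first and make explicit the injection argument and the boundedness step that the paper leaves implicit. Your remark about connectedness is also well taken: the paper's statement omits "connected," yet its own proof relies on Lemmas \ref{LeConLC}, \ref{LeAaCompInU}, and \ref{LeUnbddComp}, all of which assume $X$ connected, so connectedness must indeed be read as a standing hypothesis (your disjoint-union-of-lines example shows the statement fails without it).
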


\begin{proof}
Suppose first that $ A \in \bcx$. By Lemma \ref{LeConLC}
let $U \se \bocx$ be such that $ A \se U $. Then the assertion follows from
Lemma \ref{LeAaCompInU}.
Now suppose that $ A \in \bo(X)$.  Then $\cl A \in \bcx$, so the number of unbounded components
of $ X \sm \cl A$ is finite. From Lemma \ref{LeUnbddComp} 
it follows that the number of unbounded components of $X \sm A$  is also finite, 
since it does not exceed the number of unbounded components of $ X \sm \cl A$.
\end{proof}  

\begin{lemma} \label{LeCleverSet}
Let $X$ be locally compact, connected, locally connected.
Suppose $D \subseteq U$ where $ D \in \bcx, \ U \in \bo(X).$
Let $C$ be the intersection of the union of bounded components of $X \sm D$ 
with the union of bounded components of $X \sm U$. Then  $C$ is compact 
and $ U \sqcup C$ is open.
\end{lemma}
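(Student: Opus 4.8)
The plan is to identify $C$ explicitly as a difference of two sets we already understand, and then read off both conclusions from that description. Write $W_D$ for the union of the bounded components of $X \sm D$ and $W_U$ for the union of the bounded components of $X \sm U$, so that $C = W_D \cap W_U$. Since $D$ is compact, Corollary \ref{CoBddComp} gives $W_D \in \bo(X)$, i.e. $W_D$ is bounded and open; this is the fact I will lean on most. Because $D \se U$ we have $X \sm U \se X \sm D$, and the heart of the argument is the identity
$$ C = W_D \sm U. $$

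To prove this identity I would track components. Each component $B$ of $X \sm U$ is a connected subset of $X \sm D$, hence lies in a single component $G$ of $X \sm D$. If $G$ is bounded then $B \se G \se W_D$; if $G$ is unbounded then $B$ is disjoint from $W_D$, since the bounded components of $X \sm D$ are disjoint from $G$. Thus a bounded component $B$ of $X \sm U$ meets $W_D$ exactly when it is entirely contained in $W_D$. Consequently $W_D \cap W_U = W_D \cap (X \sm U) = W_D \sm U$: indeed every point of $W_D$ lying outside $U$ sits in a component of $X \sm U$ that is bounded (being contained in a bounded component of $X \sm D$) and hence belongs to $W_U$.

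Openness of $U \sc C$ is then immediate. First, $C \se W_U \se X \sm U$ makes the union genuinely disjoint, and
$$ U \cup C = U \cup (W_D \sm U) = U \cup W_D, $$
which is open as the union of the open set $U$ with the open set $W_D$.

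For compactness I would argue through the complement, rather than attempting to show directly that the possibly infinite union $W_U$ of bounded components is closed. Since $D$ is closed, Lemma \ref{CmpCmplBdA} makes every component of $X \sm D$ open, and Lemma \ref{LeNoUnbdComp} makes the number of unbounded ones finite; let $N_D$ be their union, which is therefore open. Then $X \sm W_D = D \cup N_D$, so, using $D \se U$,
$$ X \sm C = (X \sm W_D) \cup U = U \cup N_D. $$
This is open, so $C$ is closed; and $C \se W_D$ is bounded, so $\cl C$ is compact and equals $C$, giving compactness. The one point that needs care, and essentially the only place the hypotheses on $X$ are used, is this final step: one must resist the false claim that an arbitrary union of closed components is closed, and instead pass to the finite family $N_D$ of unbounded components supplied by Lemma \ref{LeNoUnbdComp}.
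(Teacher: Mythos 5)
Your proof is correct, and while its skeleton matches the paper's (closed plus bounded gives compact, closedness via an open complement), the organizing identity $C = W_D \sm U$ is your own and genuinely streamlines one half of the argument. The paper keeps $C = V \cap B$ (its names for $W_D \cap W_U$) and never isolates this identity; for closedness the two arguments then coincide in substance — both compute $X \sm C = U \cup N_D$ (the paper's $U \cup W$), open because the components of the open set $X \sm D$ are open, and both get boundedness from $C \se W_D$ via Corollary \ref{CoBddComp}. Where you diverge, and improve on the paper, is the openness of $U \sqcup C$: your identity gives $U \cup C = U \cup W_D$ at once, a union of two open sets, whereas the paper writes $U \sqcup C = (U \cup V) \cap (U \sqcup B)$ and must first show that $F$, the union of the unbounded components of $X \sm U$, is closed, which requires Lemma \ref{LeNoUnbdComp} to get finiteness; your route dispenses with that step entirely. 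One small inaccuracy in your closing remark: finiteness is not what makes $N_D$ open — since $D$ is closed and $X$ is locally connected, every component of $X \sm D$ is open by Lemma \ref{CmpCmplBdA}, so any union of them is open, finite or not. The trap you rightly avoid (an arbitrary union of closed components need not be closed) concerns the components of $X \sm U$, which are closed, not those of $X \sm D$; having passed to the complement of $W_D$, you never actually need the finiteness statement.
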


\begin{proof}
Write 
$$ X \sm D = V \sqcup W,$$ 
where $V$ is the union of bounded components of $ X \sm D$, 
and $W$ is the union of unbounded components of  $ X \sm D.$
Also write
$$ X \sm U = B \sqcup F,$$ 
where $B$ is the union of bounded components of $ X \sm U$, and 
$F$ is the union of unbounded components of $X \sm U.$
By Lemma \ref{LeNoUnbdComp} $F$ is a closed set.
Let 
$$ C = V \cap B.$$ 
Clearly, $C$ and $U$ are disjoint. To see that $U \sqcup C$ is open, 
note first that $U \sqcup B = X \sm F$ is an open set. Hence, 
$$ U \sqcup C = U \sqcup(V  \cap B) = (U \cup V) \cap (U \sqcup B)$$
is also an open set. Now we shall show that $C$ is closed,
i.e. that $X \sm C$ is open.  Note that $ F \se W$ by Remark \ref{ReUnbddComp}.  
The set $W$ is open by Lemma \ref{CmpCmplBdA}. Now 
\begin{eqnarray*}
X \sm C &=& X \sm (B \cap V) = (X \sm B) \cup (X \sm V)  \\
&=& ( U \sc F) \cup (D \sc W) = (U \cup D) \cup (F \cup W) = U \cup W
\end{eqnarray*} 
is an open set.  By Corollary \ref{CoBddComp} the set $C$ is bounded. 
\end{proof}

\section{Solid and Semi-solid sets}  \label{SolidSemisoid}

\begin{remark} \label{ReFinNoComp}
Let $X$ be locally compact, locally connected.
From Lemma \ref{LeNoUnbdComp}
it follows that a bounded set $B$ is semi-solid if and only if the number of bounded 
components of $X \setminus B$ is finite. For a  bounded solid set $A$ we have:
$$ X \sm A = \bsc_{i=1}^n  E_i $$ 
where $ n \in \N$ and $E_i$'s are unbounded connected components.
\end{remark}

\begin{lemma} \label{SolidCompoLC}
Let $X$ be locally compact, locally connected.
If $A \in \bax$ then each bounded component of $X \setminus A$ is a solid bounded set.
\end{lemma}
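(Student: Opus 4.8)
The plan is to verify directly the three conditions in Definition \ref{debddset} for $B$ to be a bounded solid set. Two of them come for free: $B$ is connected because it is a connected component of $X \sm A$, and $B$ is bounded by hypothesis. So everything reduces to showing that $X \sm B$ has \emph{only} unbounded components.

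The key step I would carry out is to prove that $X \sm B$ is connected, after which it is its own unique component. Writing the decomposition into connected components as $X \sm A = \bsc_{s \in S} B_s$ with $B = B_{s_0}$, we have
$$ X \sm B = A \sc \bsc_{s \ne s_0} B_s, $$
and the cleanest way to see this is connected is Lemma \ref{CmpCmplBdA}(iv): a connected component of $X \sm A$ is co-connected. Equivalently, Lemma \ref{CmpCmplBdA}(iii) says that $A$ together with any family of components of $X \sm A$ is connected. Either statement delivers the connectedness of $X \sm B$ at once.

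It then remains to check that this single set $X \sm B$ is unbounded. For this I would first note that $X \sm A$ must have at least one unbounded component. Indeed, put $K = \cl A \in \bcx$; every component of $X \sm K \se X \sm A$ lies in a component of $X \sm A$, so if $X \sm A$ had no unbounded component then all components of $X \sm K$ would be bounded, and Corollary \ref{CoBddComp} would make $X \sm K$ bounded, forcing $X = K \cup (X \sm K)$ to be compact --- contrary to the non-compact setting in which solidity is meaningful. Fix an unbounded component $E$ of $X \sm A$. Since $B$ is bounded, $E \ne B$, so $E \se X \sm B$; as $X \sm B$ is connected and contains the unbounded set $E$, it is unbounded. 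Hence $X \sm B$ is a single unbounded component and $B$ is solid.

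The step I expect to be the main obstacle is precisely the connectedness of $X \sm B$, since it is here that connectivity must be used in full. Lemma \ref{CmpCmplBdA} requires the ambient space to be connected, and its co-connectedness conclusion leans on $A$ being connected as well; both features are genuinely needed. (If $A$ were a disconnected bounded set --- say two concentric circles in $\R^2$ --- the component of $X \sm A$ lying between them would enclose a bounded hole, so its complement would acquire a bounded component and the conclusion would fail.) Thus the delicate point is to guarantee that the connectivity hypotheses feeding Lemma \ref{CmpCmplBdA} are in force; by contrast, the boundedness bookkeeping in the previous paragraph is routine once Corollary \ref{CoBddComp} is available.
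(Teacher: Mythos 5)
Your proof is correct and takes essentially the same route as the paper's: the paper likewise decomposes $X \sm A$ into bounded components $B_i$ and unbounded components $D_j$, writes $X \sm B_k = A \sqcup \bigsqcup_{i \ne k} B_i \sqcup \bigsqcup_{j \in J} D_j$, and cites Lemma \ref{CmpCmplBdA} to conclude that this set is connected and unbounded, hence that $B_k$ is solid.

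The two places where you go beyond the paper are worth recording. First, the paper asserts the unboundedness of $X \sm B_k$ with no justification; your argument --- an unbounded component of $X \sm A$ must exist, since otherwise Corollary \ref{CoBddComp} applied to $\cl A$ would make $X \sm \cl A$ bounded and hence $X$ compact, and any such component lies in $X \sm B$ --- supplies the missing step. It does force $X$ to be non-compact, but that restriction is unavoidable: in a compact space every set is bounded, so a component of $X \sm A$ could be solid only if its complement were empty, and the statement fails whenever $A$ and $X \sm A$ are both nonempty. Second, the obstacle you flag is a genuine defect of the statement as printed, not of your proof. Lemma \ref{CmpCmplBdA} requires $X$ connected and the set in question connected, whereas the present lemma hypothesizes only $A \in \bax$ and $X$ locally compact, locally connected; the paper's own one-line proof invokes Lemma \ref{CmpCmplBdA} under exactly these unstated hypotheses. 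Your concentric-circles example shows the conclusion really is false for disconnected $A$: the annulus between the two circles is a bounded component of $X \sm A$ whose complement has the closed inner disk as a bounded component. So the statement should be read with $A \in \bacx$ and $X$ connected, which is consistent with every later use of the lemma in the paper --- it is only ever applied to connected sets (in $\bassx \cup \bccx$ or $\bccx$) after $X$ has been assumed connected.
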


\begin{proof}
Let 
$$ X \setminus A = \bsc_{i \in I} B_i \sc \bsc_{j \in J} D_j $$
be the decomposition  of $X \setminus A$ into components, where $B_i, \ i \in I$ are bounded
components, and $D_j, \ j \in J$ are unbounded ones. Pick a bounded component $B_k$.
Then 
$$ X \setminus B_k = A  \sc \bsc_{i \neq k} B_i  \sc \bsc_{j \in J} D_j $$ 
Note that the set on the right hand side is connected by 
Lemma \ref{CmpCmplBdA} and unbounded.
Hence, $B_k$ is solid.
\end{proof}

A set $A \in \bacx$ may not be solid. But we may make it solid by filling in the "holes" 
by adding to $A$ all bounded components of $X \sm A$. More precisely, we have
the following result.

\begin{lemma} \label{leSolidHu}
Let $X$ be locally compact, locally connected.
For $A \in \bacx$ let 
 $\{A_i\}_{i=1}^n$be the unbounded components 
of $X \setminus A$ and $\{B_s\}_{s \in S}$ be the bounded components of $X \sm A$. 
Then the set $\tilde{A} = A \sc \bsc_{s \in S} B_s  =X \setminus \bsc_{i=1}^n A_i $ 
is solid.
\end{lemma}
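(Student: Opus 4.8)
The plan is to verify directly the two defining properties of a solid set for $\tilde A$: that $\tilde A$ is connected, and that $X \sm \tilde A$ has only unbounded components. First I would record the relevant decomposition. Since $A$ is bounded, Lemma \ref{LeNoUnbdComp} guarantees that $X \sm A$ has only finitely many unbounded components $A_1, \ldots, A_n$, the remaining components being the bounded ones $B_s$, $s \in S$. Reading off the decomposition of $X \sm A$ into its components then yields the two displayed identities, and in particular $X \sm \tilde A = \bsc_{i=1}^n A_i$.

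For connectedness I would invoke Lemma \ref{CmpCmplBdA}(iii) directly: as $A \in \acx$ and $\{B_s\}_{s \in S}$ is a family of components of $X \sm A$, the union $A \sc \bsc_{s \in S} B_s = \tilde A$ is connected. (Underlying this is that each $A \sc B_s$ is connected and all of them contain the connected set $A$.) This settles the first property with essentially no extra work, and uniformly in the two cases $A$ compact and $A$ bounded open.

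The remaining task is to identify the components of $X \sm \tilde A$. Since $X \sm \tilde A = \bsc_{i=1}^n A_i$, with each $A_i$ connected and unbounded by construction, it suffices to show that each $A_i$ is a connected component of $X \sm \tilde A$, i.e. that the $A_i$ are neither merged nor split in the new complement. This is where the finiteness from Lemma \ref{LeNoUnbdComp} is used. By Lemma \ref{CmpCmplBdA}, each $A_i$ is closed in $X$ when $A$ is open and open in $X$ when $A$ is compact. In either case each $A_i$ is clopen in $X \sm \tilde A$: it is itself closed (resp. open), while its complement $\bsc_{j \ne i} A_j$ within $X \sm \tilde A$ is a finite union of sets of the same type, hence again closed (resp. open). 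A connected clopen subset is a component, so each $A_i$ is a component of $X \sm \tilde A$, and these exhaust the complement. All of them being unbounded, $\tilde A$ is solid.

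I expect no serious obstacle, since the two substantive inputs---finiteness of the unbounded components of the complement, and connectedness of $A$ together with any family of complementary components---are exactly Lemmas \ref{LeNoUnbdComp} and \ref{CmpCmplBdA}, both already established. The single point I would write out carefully is that passing from the components of $X \sm A$ to those of $X \sm \tilde A$ does not merge or split the unbounded pieces; this depends on the finiteness of $\{A_i\}_{i=1}^n$, and is the only place that finiteness is genuinely needed.
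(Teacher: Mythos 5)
Your proposal is correct and follows the paper's own proof: connectedness of $\tilde A$ comes from Lemma \ref{CmpCmplBdA}(iii), and the second property (the paper dismisses it as ``clear'') is exactly your observation that $X \sm \tilde A = \bsc_{i=1}^n A_i$ with each $A_i$ an unbounded component. Your careful clopen argument for why the $A_i$ remain components is a valid way to fill in that step, though finiteness is not truly essential there---any connected subset of $X \sm \tilde A$ lies in a component of $X \sm A$ disjoint from every $B_s$, hence in some $A_i$, so maximality alone identifies the components.
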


\begin{proof}
The set $\tilde A $ is connected by Lemma \ref{CmpCmplBdA}. 
It is clear that $X \sm \tilde A$ has only unbounded components.
\end{proof}
  
\begin{definition} \label{solid hull}
Let $X$ be locally compact, locally connected.
For $A \in \bacx$ let $\{A_i\}_{i=1}^n$be the unbounded components 
of $X \setminus A$ and $\{B_s\}_{s \in S}$ be the bounded components of $X \sm A$. 
We say that $\tilde{A}=  A \sc \bsc_{s \in S} B_s= X \setminus \bsc_{i=1}^n A_i $ 
is a solid hull of $A$.
\end{definition}

The next lemma gives some properties of solid hulls of connected sets that are  bounded open or compact. 

\begin{lemma}\label{PrSolidHuLC}
Let $X$ be locally compact, connected, locally connected.
Let $A, B \in \bacx$.
\begin{enumerate}[label=(a\arabic*),ref=(a\arabic*)]
\item \label{part1}
If $ A \subseteq B$ then $\tilde{A} \subseteq \tilde{B}.$
\item \label{part2}
$\tilde{A}$ is a bounded solid set, $A \subseteq  \tilde{A}$, and $A$ is solid iff  $A   = \tilde{A}.$
\item \label{part3}
$\tilde{\tilde{A}} = \tilde{A}.$
\item  \label{part4}
If $A$ is open, then so is $ \tilde{A}$. If $A$ is compact, then so is $\tilde{A}.$
\item \label{part5}
If  $A, B$ are disjoint bounded connected sets, then their solid hulls $\tilde{A}, \tilde{B}$ are
either disjoint or one is properly contained in the other.
\end{enumerate}
\end{lemma}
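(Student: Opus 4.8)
The plan is to establish the parts in the order \ref{part1}, \ref{part2}, \ref{part4}, \ref{part3}, \ref{part5}, so that each may use its predecessors; throughout I use the description $\tilde A = X \sm \bsc_{i=1}^n A_i$, where the $A_i$ are the finitely many (Lemma \ref{LeNoUnbdComp}) unbounded components of $X \sm A$. Part \ref{part1} is purely about complements: if $A \se B$ then by Lemma \ref{LeUnbddComp} the union of the unbounded components of $X \sm B$ lies in the union of the unbounded components of $X \sm A$, and complementing gives $\tilde A \se \tilde B$ at once. For \ref{part2}, solidity of $\tilde A$ and the containment $A \se \tilde A$ are Lemma \ref{leSolidHu}, so the only content is boundedness. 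I would prove this uniformly by setting $K = \cl A$ (compact, and connected as the closure of a connected set) and letting $W'$ be the union of the bounded components of $X \sm K$, which is bounded by Corollary \ref{CoBddComp}. For a bounded component $B_s$ of $X \sm A$ and any $x \in B_s \sm K$, the component of $X \sm K$ through $x$ is connected, lies in $X \sm A$ and meets $B_s$, hence lies in $B_s$, is bounded, and so lies in $W'$; thus $B_s \se K \cup W'$ and $\tilde A \se K \cup W'$ is bounded. Finally $A$ is solid iff $X \sm A$ has no bounded component iff $\tilde A = A$, using that $\tilde A$ is always solid.

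For \ref{part4} I would invoke Lemma \ref{CmpCmplBdA}: when $A$ is open the $A_i$ are closed, a finite union of closed sets is closed, so $\tilde A$ is open; when $A$ is compact the $A_i$ are open, so $\tilde A$ is closed, hence compact by the boundedness from \ref{part2}. With this in hand \ref{part3} is immediate, since $\tilde A$ is connected, bounded and open-or-compact, i.e. $\tilde A \in \bacx$, and being solid it satisfies $\tilde{\tilde A} = \tilde A$ by \ref{part2}.

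The substantive part is \ref{part5}, and the guiding principle is that a connected set disjoint from $A$ lies in a single component of $X \sm A$. As $A \cap B = \O$, the set $B$ lies in a unique component of $X \sm A$. If that component $C$ is bounded, then $C \se \tilde A$, and since $C$ is solid (Lemma \ref{SolidCompoLC}) and lies in $\bacx$ (connected, bounded, closed-or-open by Lemma \ref{CmpCmplBdA}), parts \ref{part1} and \ref{part2} give $\tilde B \se \tilde C = C \se \tilde A$; as $C \se X \sm A$ we also get $A \cap \tilde B = \O$, so $A \se \tilde A \sm \tilde B$ and $\tilde B \subsetneq \tilde A$ properly. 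Symmetrically, if $A$ lies in a bounded component of $X \sm B$ then $\tilde A \subsetneq \tilde B$ (these two cases are mutually exclusive, since the first yields $A \cap \tilde B = \O$). In the remaining case $B$ lies in an unbounded component of $X \sm A$ and $A$ in an unbounded component of $X \sm B$; then $B \cap \tilde A = \O$, so the connected set $\tilde A$ lies in a single component of $X \sm B$, which cannot be bounded, for otherwise $A \se \tilde A$ would lie in a bounded component of $X \sm B$. Hence $\tilde A$ lies in an unbounded component of $X \sm B$, which is disjoint from $\tilde B$, giving $\tilde A \cap \tilde B = \O$.

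I expect this last case of \ref{part5} to be the main obstacle. The tempting local analysis along the boundaries $\partial \tilde A$, $\partial \tilde B$ is awkward; the clean device is the global observation that $\tilde A$, being connected and disjoint from $B$, is trapped inside one component of $X \sm B$. It is worth stressing that the disjointness of the cores is essential: for arbitrary overlapping solid sets the dichotomy fails (two overlapping disks in $\R^2$), so the argument must feed $A \cap B = \O$ into the component-counting at precisely this point.
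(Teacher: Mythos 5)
Your proof is correct, and for parts \ref{part1}--\ref{part4} it follows the paper's route: \ref{part1} from the fact that unbounded components of $X \sm B$ lie inside unbounded components of $X \sm A$, \ref{part4} from Lemma \ref{CmpCmplBdA} together with Lemma \ref{LeNoUnbdComp}, and \ref{part3} as a formal consequence. The one variation there is boundedness in \ref{part2}: you pass to $K = \cl A$ and invoke Corollary \ref{CoBddComp}, which handles the open and compact cases uniformly, whereas the paper treats compact $A$ via Lemma \ref{LeConLC} and Lemma \ref{LeAaCompInU} and gets the bounded open case from \ref{part1}; since Corollary \ref{CoBddComp} is itself a consequence of Lemma \ref{LeAaCompInU}, this is the same machinery packaged differently. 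In \ref{part5} your trichotomy coincides with the paper's (note that $A \se \tilde B$ is equivalent to $A$ lying in a bounded component of $X \sm B$), and your disjointness argument is the paper's with the roles of $A$ and $B$ exchanged. The genuine difference is the properness claim. The paper argues by contradiction: if $\tilde A = \tilde B$, then after a case split on which of $A$, $B$ is open or closed, one manufactures a proper clopen subset of the connected space $X$. You instead observe directly that $\tilde B \se C \se X \sm A$, where $C$ is the bounded component of $X \sm A$ containing $B$ (solid by Lemma \ref{SolidCompoLC}, and in $\bacx$ by Lemma \ref{CmpCmplBdA}), so $\tilde B$ misses $A$ while $\tilde A \supseteq A$; properness follows at once (for nonempty $A$, an assumption both proofs implicitly use). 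This is cleaner: it avoids the clopen argument and the open/closed case analysis entirely, and it yields the slightly stronger conclusion $\tilde B \cap A = \O$.
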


\begin{proof}
Part \ref{part1} follows since each unbounded component of 
$ X \sm B$ is contained in an unbounded component of $  X \sm A$. 
 If $A$ is compact, choose by
Lemma \ref{LeConLC} a set $U \in \bocx$ that contains $A$.
Since $\tilde A$ is a union of $A$ and bounded components of $X \sm A$, applying 
Lemma \ref{LeAaCompInU} we see that $\tilde A$ is bounded. The rest of  
parts \ref{part2} and \ref{part3} is immediate. 
For part \ref{part4},
note that if $A$ is open (closed) then 
each of finitely many (by Lemma \ref{LeNoUnbdComp}) unbounded components of $X \sm A$ is closed (open) by
Lemma \ref{CmpCmplBdA}.
To prove part \ref{part5}, let $A, B \in \bacx$ be disjoint. 
If $A \se \tilde B$ then  $\tilde A \se \tilde B$ by parts \ref{part1} and \ref{part3}. 
To prove that the inclusion 
is proper, assume to the contrary that $\tilde A = \tilde B$. If one of the sets $A, B$ 
is open and the other is closed, this equality means that $\tilde A$ is a proper  
clopen subset of $X$, 
which contradicts the connectivity of $X$. Suppose $A$ and $B$ are both closed (both open).
Then it is easy to see that $A = E$, where $E$ is a  bounded component of $X \sm B$,
an open (closed) set. Thus, $A$ is a proper clopen subset of $X$, which
contradicts the connectivity of $X$. Therefore, $\tilde A$ is properly contained in $\tilde B$. 
Similarly, if $ B \se \tilde A$ then $ \tilde B \se \tilde A $, and the inclusion is proper. 
Suppose neither of the above discussed cases $A \se \tilde B$ or $B \se \tilde A$ occurs.  
Then by connectedness we must have:
$$ A \se G , \ \ B \se E$$
where $G$ is an unbounded component of $ X \sm B$ and $E$ is an unbounded 
component of $ X \sm A$. Then $ B \se \tilde B \se X \sm G \se X \sm A$,
i.e. $\tilde B$ is contained in a component of $ X \sm A$.
Since $\tilde B$ is connected and $ B \se E$ we must have $ \tilde B \se E  \se X \sm \tilde A$.
\end{proof}

\begin{lemma} \label{LeCsInside}
Let $X$ be locally compact, connected, locally connected.
If $K \se U, \ K \in \bcx, \ U \in \bosx$ then there exists $C \in \bcsx$ such that
$$ K \se C \se U.$$
\end{lemma}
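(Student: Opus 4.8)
The plan is to reduce to the connected situation already handled by Lemma~\ref{LeConLC}, produce a compact connected set trapped between $K$ and $U$, and then repair the possible failure of solidity by passing to its solid hull, the only real work being to check that this hole-filling does not escape $U$.

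First I would use the hypothesis that $U$ is solid, hence in particular connected, so that $U \in \bocx$. Since $K \in \bcx$ is a compact subset of the bounded open connected set $U$, Lemma~\ref{LeConLC} (applied in the case where the ambient open set is connected) yields a set $V \in \bocx$ whose closure $D := \cl V \in \bccx$ satisfies
\[ K \se V \se D \se U. \]
Thus $D$ is a compact connected set sandwiched between $K$ and $U$; the only property it may lack is solidity.

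Next I would form the solid hull $C := \tilde D$ of Definition~\ref{solid hull}, adjoining to $D$ all bounded components of $X \sm D$. Because $D$ is compact and connected, Lemma~\ref{PrSolidHuLC}\ref{part4} gives that $C$ is compact, while Lemma~\ref{PrSolidHuLC}\ref{part2} gives that $C$ is solid; hence $C \in \bcsx$. Since $K \se D \se C$ holds by construction, the proof will be complete once the upper inclusion $C \se U$ is verified.

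This last inclusion is the crux, and it is exactly where the solidity of $U$ enters. From $D \se U$ we get $X \sm U \se X \sm D$, and since $U$ is solid every component of $X \sm U$ is unbounded. Each such component is a connected subset of $X \sm D$, so it lies inside a single component of $X \sm D$, which is therefore itself unbounded. Consequently every \emph{bounded} component of $X \sm D$ is disjoint from $X \sm U$, i.e. contained in $U$. As $C$ is the union of $D$ with the bounded components of $X \sm D$, and each of these pieces lies in $U$, we conclude $C \se U$, giving $K \se C \se U$ with $C \in \bcsx$. The main obstacle, as indicated, is precisely this verification that filling the holes of $D$ stays within $U$; the remainder is just assembly of the earlier lemmas.
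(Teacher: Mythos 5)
Your proof is correct and follows essentially the same route as the paper: apply Lemma~\ref{LeConLC} (using that $U$, being solid, is connected) to get $K \se V \se \cl V \se U$, then take $C$ to be the solid hull of $\cl V$. The only difference is that where the paper disposes of the inclusion $C \se U$ by citing Lemma~\ref{PrSolidHuLC} (parts \ref{part1} and \ref{part2} give $\widetilde{\cl V} \se \tilde U = U$ directly), you reprove that containment by hand via the bounded components of $X \sm \cl V$ --- a correct argument, but it is exactly the content of the cited lemma.
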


\begin{proof}
One may take $C$ to be the solid hull of the set $\cl V$ from Lemma 
\ref{LeConLC}. Then $C \se U$  by Lemma \ref{PrSolidHuLC}. 
\end{proof}

\begin{lemma} \label{opensolid}
Let $X$ be locally compact, connected, locally connected.
Let $K \se V, \ K \in \bcsx, \ V \in \ox$. Then there exists $ W \in \bosx$ such that 
$$ K \se W \se \cl W\se V.$$
\end{lemma}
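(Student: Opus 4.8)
The plan is to enclose $K$ in a bounded open connected set and then pass to a solid hull; the only real difficulty is that a naive solid hull can protrude beyond $V$, and the solidity of $K$ is exactly what lets me prevent this. First I would reduce to the case $V \in \bocx$: since $K$ is solid it is connected, so Lemma \ref{LeConLC} gives $U_0 \in \bocx$ with $K \se U_0 \se \cl{U_0} \se V$, and as any $W$ with $\cl W \se U_0$ automatically satisfies $\cl W \se V$, I may replace $V$ by $U_0$ and assume $V$ is bounded, open and connected. By Remark \ref{ReFinNoComp} I write $X \sm K = \bsc_{i=1}^n E_i$, the $E_i$ being the finitely many unbounded components of $X\sm K$, each open by Lemma \ref{CmpCmplBdA}, and I set $F = X \sm V$.

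Next I choose $U \in \bocx$ with $K \se U \se \cl U \se V$ (Lemma \ref{LeConLC}) and let $A_1,\dots,A_r$ be the unbounded components of $X \sm U$ (finite, by Lemma \ref{LeNoUnbdComp}). The trouble with the solid hull $\tilde U$ is that it fills \emph{every} bounded component ("hole") of $X\sm U$, and a hole may stick out of $V$; such a hole meets $F$ and hence must contain a bounded component of $F$, i.e. a hole of $V$. Call these the \emph{bad} holes. I would first show there are only finitely many of them: choosing $V'\in\bocx$ with $\cl U \se V' \se \cl{V'}\se V$ and applying Lemma \ref{LeAaCompInU} to $\cl U \se V'$, only finitely many components of $X\sm\cl U$ fail to lie in $V'$, while each bad hole meets $F \se X\sm\cl{V'}$ and hence contains one such component, distinct bad holes yielding disjoint ones.

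The central step is to \emph{re-open} each bad hole to infinity, which is possible precisely because $K$ is solid. Let $B_1,\dots,B_p$ be the bad holes, say $B_l \se E_{i(l)}$. By Lemma \ref{LeUnbddComp} the component $E_{i(l)}$ contains some unbounded component $A_{j(l)}$ of $X\sm U$; picking $b_l \in B_l$, $a_l \in A_{j(l)}$ and applying Lemma \ref{prelLemma} inside the open connected set $E_{i(l)}$, I obtain $\gamma_l \in \bccx$ with $b_l,a_l \in \gamma_l \se E_{i(l)} \se X \sm K$. I then form the closed set
$$ S = \bsc_{j=1}^r A_j \;\cup\; \bc_{l=1}^p (B_l \cup \gamma_l), $$
which is disjoint from $K$ and each of whose components is unbounded, since every $B_l$ is joined through $\gamma_l$ to the unbounded $A_{j(l)}$. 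Let $W^\circ$ be the component of $X \sm S$ containing $K$; it is open (local connectedness) and connected, and one checks $X \sm S \se V$ (because $X\sm S \se \tilde U \sm \bc_l B_l$, and since $U \se V \se \tilde V$ with $\tilde V$ solid we have $\tilde U \se \tilde V$ by Lemma \ref{PrSolidHuLC}, while every hole of $V$ enclosed by $U$ already lies in some $B_l$), so $W^\circ$ is bounded. Finally I set $W = \tilde{W^\circ}$, which by Lemma \ref{PrSolidHuLC} is a bounded open solid set containing $K$; its bounded complementary components avoid $S$ and so lie in $X\sm S \se V$, giving $W \se V$.

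The main obstacle is exactly this control of the solid hull, and it rests on two facts: the finiteness of the bad holes (Lemma \ref{LeAaCompInU}), which keeps the bridging a finite construction, and the solidity of $K$, which places each bad hole in an unbounded component of $X\sm K$ and so permits a bridge to infinity avoiding $K$. The remaining delicate point is upgrading $W \se V$ to $\cl W \se V$; here I would use local connectedness together with $\cl U \se V$: any $p \in F$ lies outside $\cl U$, hence has a connected neighbourhood contained in a single component of $X \sm U$, which is part of $S$, so this neighbourhood misses $W$ and therefore $p \notin \cl W$. This gives $\cl W \se V$ and completes the construction.
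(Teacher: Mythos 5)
Your proof is correct, and its skeleton is the same as the paper's: both enclose $K$ in some $U \in \bocx$ with $\cl U \se V$, build a closed "barrier" disjoint from $K$ all of whose components are unbounded --- exploiting solidity of $K$ (every component of $X \sm K$ is unbounded) to tie bounded pieces of $X \sm U$ to unbounded ones --- take the component of the barrier's complement containing $K$, and finish with its solid hull, which avoids the barrier precisely because the barrier's components are unbounded. The difference is in how the barrier is assembled, and it changes the supporting work. The paper puts \emph{all} of $X \sm U$ into the barrier: inside each component $V_j$ of $X \sm K$ it takes the union $E_j$ of all bounded components of $X \sm U$ there and one compact connected set $D_j \supseteq (F_j \cap \cl U) \cup \cl{E_j}$ from Lemma \ref{LeConLC}, so that $B_j = D_j \cup F_j$ is closed, connected and unbounded; this swallows possibly infinitely many holes in one stroke and forces the complement of the barrier to lie inside $U$, so $\cl W \se \cl U \se V$ comes for free. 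You instead leave the "good" holes out of the barrier and bridge only the "bad" ones individually by arcs from Lemma \ref{prelLemma}; this is more economical per hole, but it obliges you to first prove there are only finitely many bad holes (your argument via the auxiliary $V'$ and Lemma \ref{LeAaCompInU} is correct, and this finiteness is genuinely needed for $S$ to be closed), and, since your complement only lands in $V$ rather than in $U$, to add the final local-connectedness argument at points of $X \sm V$ upgrading $W \se V$ to $\cl W \se V$ --- which you also supply correctly. In exchange, the paper's route must verify the slightly delicate facts that $E_j$ is bounded and $\cl{E_j} \se V_j$, which your construction never needs.
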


\begin{proof}
By Lemma \ref{LeConLC} we may choose $ U  \in \bocx$ such that
\begin{eqnarray} \label{V}
K \se U \se \cl U \se V.
\end{eqnarray}
Since $ K \in \bcsx$ let
$$ X \sm K = \bsc_{j=1}^n V_j$$
be the decomposition into connected components.  Each $V_j $ is an unbounded 
open connected set. Since $X\sm U \se X \sm K$, for each $j=1, \ldots, n$ let 
$E_j$ be the union of all  bounded components of $X \sm U$ contained in $V_j$, and 
let $F_j$ be the union of (finitely many by Lemma \ref{LeNoUnbdComp}) 
unbounded components of $X \sm U$ contained in $V_j$.
By Lemma \ref{CmpCmplBdA} each $F_j$ is closed.
By Lemma \ref{LeUnbddComp} each $F_j$ is non-empty. 
Then by Lemma  \ref{CmpCmplBdA}  non-empty set $F_j \cap \cl U \se V_j$ and 
$F_j \cap \cl U \in \bcx$.
Now, $E_j \se \tilde U$, so $ E_j $ is bounded.
Note that $X = K \sc \bsc_{j=1}^n V_j$, and a limit point $x$
of $E_j$ can not be in $V_i$ for $i \neq j$; and it can not be in $K$, since in this case a neighborhood $U$ of $x$ contains no points of 
$E_j$. Thus, $\cl{ E_j} \se V_j$. Then $(F_j \cap \cl U) \cup \cl{ E_j}$ is a compact set contained in $V_j$. 
By Lemma \ref{LeConLC}  there exists $ D_j \in \bccx$ such that 
\begin{eqnarray} \label{3sh}
(F_j \cap \cl U) \cup \cl{ E_j} \se D_j \se V_j. 
\end{eqnarray} 
Let 
$$ B_j = D_j \cup F_j.$$
Then $B_j$ is connected because from (\ref{3sh}) one sees that $D_j $
intersects every component comprising $F_j$. 
Thus,  each $B_j$ is an unbounded closed connected set, $B_j \cap K =\O$.
Set
 $$B= \bigcup_{j=1}^n B_j.$$
Then 
$B \cap K = \O$.
Now $ K \se X \sm B$, so let
$O$ be the connected component  of $X \sm B$ such that $ K \se O \se X \sm B$.
Since $B= \bigcup_{j=1}^n B_j \se X \sm O$, 
$B$ is contained in the union of unbounded components of $X \sm O$. 
Hence, each bounded component of $ X \sm O$ is disjoint from $B$, and
so $\tilde O \se X \sm B$. Thus
 $$ K \se O \se \tilde O \se  X \sm B \se U.$$
By (\ref{V}) we see that 
$$ K \se \tilde O \se U \se \cl U \se V$$
and we may take $W =\tilde O$.  
\end{proof} 

\begin{remark}
The closure of a solid set need not be solid. For example, 
in the infinite strip
$X = \r \times [0,1] $  the open set $ U = ((1,3) \times (0,1)) \cup ((5,7) \times(0,1)) \cup ((2,6) \times (0.25 , 0.75))$
is solid,  while its closure is not.
\end{remark}

\begin{lemma} \label{ossreg}
Let $X$ be locally compact, connected, locally connected.
Suppose $K \se W, \ K \in \bccx, \ W \in \ossx$. Then there exist $V \in \bossx$ and 
$ D \in \bcssx$ such that 
$$ K \se V \se D \se W.$$
\end{lemma}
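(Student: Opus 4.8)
The plan is to enlarge $K$ to a bounded open connected set, pass to its solid hull in order to annihilate the (possibly infinitely many) spurious holes, and then re-open only the finitely many holes that are forced by the complement of $W$, doing so with a ``moat'' so that the resulting sets have their closures inside $W$.

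First I would record the complementary structure of $W$. Since $W \in \ossx$ is connected and $X \sm W$ has finitely many components, write $X \sm W = \bsc_{j=1}^m P_j$; having finitely many components, each $P_j$ is clopen in $X\sm W$, hence closed in $X$, and the $P_j$ are mutually separated. Because $W$ is open and connected, Lemma \ref{CmpCmplBdA}\,(iv) shows each $P_j$ is co-connected, so every bounded $P_j$ is in fact solid. Next, by Lemma \ref{LeConLC} choose $V_0 \in \bocx$ with $K \se V_0 \se \cl{V_0} \se W$, and form the solid hulls $\tilde{V_0}$ and $Y := \tilde{\cl{V_0}}$; by Lemma \ref{PrSolidHuLC} these are, respectively, a bounded open solid set and a compact solid set, with $V_0 \se \tilde{V_0} \se Y$, so $\cl{\tilde{V_0}} \se Y$.

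The holes to be re-opened are indexed by $J' = \{\, j : P_j \se Y \,\}$, the bounded components of $X\sm W$ that get enclosed; this set is finite, each such $P_j$ is compact and solid, $P_j \se \tilde{V_0}$, and $Y \cap (X \sm W) = \bsc_{j\in J'} P_j$ while $\cl{V_0} \cap (X\sm W) = \O$. For each $j \in J'$ I would use Lemma \ref{opensolid} (applied to the compact solid set $P_j$ inside a small open set separating it from $\cl{V_0}$ and from $X\sm Y$) to pick a bounded open solid ``moat'' $O_j$ with $P_j \se O_j \se \cl{O_j} \se \tilde{V_0}$, chosen with pairwise disjoint closures lying in the interior of the enclosing hole. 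Then set
$$ V := \tilde{V_0} \sm \bsc_{j \in J'} \cl{O_j}, \qquad D := Y \sm \bsc_{j\in J'} O_j. $$
Here $V$ is open and bounded, $D$ is compact, and $K \se V \se D \se W$ (the inclusion $D \se W$ holds because $\bigcup_j O_j \supseteq \bsc_{j\in J'} P_j = Y\cap(X\sm W)$, and $\cl V \se \cl{\tilde{V_0}} \se Y$ meets $X\sm W$ only inside the removed moats). Computing complements, $X \sm V = \big(\bsc_i E_i\big) \sc \big(\bsc_{j\in J'}\cl{O_j}\big)$ and $X\sm D = \big(\bsc_i F_i\big)\sc\big(\bsc_{j\in J'} O_j\big)$, where $E_i$ (resp. $F_i$) are the finitely many (Lemma \ref{LeNoUnbdComp}) unbounded components of $X\sm V_0$ (resp. $X \sm \cl{V_0}$); since the $\cl{O_j}$ (resp. $O_j$) are bounded, connected, and separated from the rest, these displays exhibit all components of the complements, so each complement has only finitely many components. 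By Remark \ref{ReFinNoComp}, it then only remains to prove that $V$ and $D$ are connected.

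The main obstacle is exactly this connectivity, i.e. that deleting the interior solid moats does not disconnect $\tilde{V_0}$ or $Y$. I would reduce it to attaching each enclosing hole to $\cl{V_0}$: writing $Y = \cl{V_0} \cup \bigcup_\alpha B_\alpha$ with $B_\alpha$ the (open) components of $X\sm\cl{V_0}$, every ``good'' hole $B_\alpha \se W$ satisfies $\partial B_\alpha \se \cl{V_0}$, and therefore $\cl{V_0}\cup B_\alpha$ is connected by Remark \ref{OpenComp}; for a ``bad'' hole carrying some $P_j$, the set $B_\alpha \sm \bigcup_j O_j$ still reaches $\partial B_\alpha \se \cl{V_0}$ because the moats lie in the interior of $B_\alpha$. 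The heart of the argument is that $B_\alpha \sm \bigcup_j O_j$ is itself connected; this is where I expect the real work, and I would settle it using that each removed $P_j$ (and hence each moat, chosen accordingly) is co-connected, so that removing it from the connected set $B_\alpha$ cannot split it --- a one-hole-at-a-time argument relying on Lemma \ref{CmpCmplBdA} and Remark \ref{OpenComp}. Once every piece attaches to the connected set $\cl{V_0}$, both $D$ and (by the same argument applied to $\tilde{V_0}$ and the $\cl{O_j}$) $V$ are connected, completing the proof that $V \in \bossx$ and $D \in \bcssx$.
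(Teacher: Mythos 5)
Your overall plan (fatten $K$, pass to solid hulls, re-open the holes forced by $X\sm W$) is the same in spirit as the paper's proof, but the step you yourself flag as ``the real work'' is precisely where the argument breaks, and the mechanism you propose for it is false. The claim that $B_\alpha\sm\bigcup_j O_j$ is connected can fail for \emph{every} admissible choice of moats. Concretely, let $X$ be the two-dimensional torus with one point removed, let $P$ be a simple closed curve in $X$ that does not separate the torus (a core circle of the handle), and let $W=X\sm P$; then $W\in\ossx$ and $P$ is its unique complementary component (compact, connected, and co-connected, exactly as in your setup). Take $V_0=X\sm(\Delta\cup\cl{N})$, where $\Delta$ is a small closed disk around the puncture and $N$ is an open annular neighborhood of $P$ disjoint from $\Delta$; then the bounded component of $X\sm\cl{V_0}$ containing $P$ is $B_\alpha=N$. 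For \emph{any} open $O$ with $P\se O\se\cl{O}\se N$, the set $N\sm O$ is disconnected: in coordinates $N\cong S^1\times(-1,1)$ with $P=S^1\times\{0\}$, any path between the two ends of the annulus must cross $P\se O$ by the intermediate value theorem. The same example refutes the principle you invoke (``removing a co-connected set from a connected set cannot split it''): $P$ is co-connected in $X$ precisely because it is non-separating in the torus, yet $N\sm P$ has two components. Co-connectedness controls the complement in $X$, not the complement relative to a subregion $B_\alpha$. What your construction actually needs is weaker --- that every component of $B_\alpha\sm\bigcup_j O_j$ has closure meeting $\cl{V_0}$ --- and in this example that can be arranged by a good choice of moat; but it is not automatic (a moat whose closure accumulates on a circle inside $B_\alpha$ leaves trapped components), and the proposal contains no argument producing a good moat in a general locally compact, connected, locally connected space. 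That is a genuine gap, not a detail to be routinized.

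There is also a secondary false claim: $P_j\se Y$ does not imply $P_j\se\tilde{V_0}$, so the moats cannot in general be placed inside $\tilde{V_0}$ as you require. In $X=\r^2$ take $W=\r^2\sm\{0\}$ and let $V_0$ be the open annulus $\{1<r<2\}$ minus the closed radial segment from $(1,0)$ to $(2,0)$: this is a legitimate output of Lemma \ref{LeConLC}, and $Y=\tilde{\cl{V_0}}=\{r\le 2\}$ contains $P_1=\{0\}$, but $X\sm V_0$ is connected and unbounded, so $\tilde{V_0}=V_0\not\supseteq P_1$; enclosure by $\cl{V_0}$ does not imply enclosure by $V_0$. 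The paper's proof avoids both problems with one device: it chooses a nested pair $U\in\bocx$, $C\in\bccx$ with $K\se U\se C\se W$ and builds $V$ and $D$ by deleting from $\tilde U$ and $\tilde C$ not moats but \emph{entire} bounded components of $X\sm U$, respectively of $X\sm C$ (those containing components of $X\sm W$, the deleted components of $X\sm U$ being chosen to contain the deleted components of $X\sm C$, which forces $V\se D$). Since only whole complementary components are removed, what remains is $U$ (resp.\ $C$) together with a subfamily of the components of its complement, so connectedness is immediate from Lemma \ref{CmpCmplBdA}, and the complements of $V$ and $D$ are visibly finite disjoint unions of connected sets. To salvage your version you would have to prove the attachment statement above for carefully constructed moats; as written, the proof does not go through.
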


\begin{proof}
By Lemma \ref{LeConLC} choose $U \in \bocx$ and $ C \in \bccx$ such that 
$$ K \se U \se C \se W.$$
Let $X \sm W = \bsc_{i=1}^n E_i, \ X \sm C = \bsc_{t \in T} V_t, \ X \sm U = \bsc_{s \in S} D_s$ 
be decompositions into connected components of $X \sm W, \ X \sm C,  \ X \sm U$ respectively.
Then 
$$  \bsc_{i=1}^n E_i \se  \bsc_{t \in T} V_t \se \bsc_{s \in S} D_s.$$
Let  $T_0 = \{ t \in T: \ V_t \mbox{   is unbounded   } \}$. 
Let us index by $T'$ the family of 
all bounded components of $X \sm C$ each of which contains a component of $X \sm W$. 
So $ \bsc_{i=1}^n E_i \se  \bsc_{t \in T_0}  V_t \sc   \bsc_{t \in T'} V_t$. Note that $T'$ is a finite index set.
Now let us index by $S'$ the family of 
all bounded components of $X \sm U$ each of which contains a component $V_t$ for some 
$t \in T' $. 
Note that $S'$ is a finite index set and 
$$ \bsc_{t \in T'} V_t \se \bsc_{s \in S'} D_s.$$
Consider 
$$ V = \tilde U \sm \bsc_{s \in S'} D_s.$$ 
Then $V$ is bounded.
Also, $V$ is open. By Lemma \ref{CmpCmplBdA} $V$ is connected. 
Since 
$$ X \sm V = (X \sm \tilde U) \sc\bsc_{s \in S'} D_s \se \bsc_{s \in S} D_s  = X \sm U$$
we see that $V \in \bossx$ (as the first equality indicates that $X \sm V$ has finitely 
many components), and that $U \se V$.
Now consider 
$$ D = \tilde C \sm \bsc_{t \in T' } V_t.$$
Then $D$ is compact. By Lemma \ref{CmpCmplBdA} $D$ is connected. 
We have 
$$ X \sm D = (X \sm \tilde C) \sc \bsc_{t \in T'} V_t  \se  (X \sm \tilde U) \sc \bsc_{s \in S'} D_s = X \sm V,$$
so $X \sm D$ has finitely many components, and $V \se D$. Thus, $D \in \bcssx$.
Also, 
$$ X \sm W = \bsc_{i=1}^n E_i \se 
\bsc_{t \in T_0} V_t \sc \bsc_{t \in T'} V_t  = (X \sm \tilde C) \sc \bsc_{t \in T'} V_t = X \sm D.$$
Therefore, $ D \se W$.
Then we have:
$$ K \se U \se V \se D \se W,$$
where $ V \in \bossx$ and $ D \in \bcssx$.
\end{proof}

Let $V$ be an open subset of $X$ endowed with the subspace topology.  Let $D \se V$.
By $\cl D^V$ we denote the closure of  $D$ in $V$ with the subspace topology. As before, 
$\cl D$ stands for the closure of $D$ in $X$.  

\begin{lemma} \label{76a}
Let $V \in \ox, \ D \se V$. Suppose $V$ is endowed with the subspace topology.
\begin{itemize}
\item[a)]
If $D$ is bounded in $V$ with the subspace topology then 
$\cl D^V = \cl D$ and $\cl D \se V$. 
\item[b)]
If $D$ is bounded in $X$ and $\cl D \se V$ then $D$ is bounded in $V$.
\end{itemize}
\end{lemma}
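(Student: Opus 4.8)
The plan is to reduce both parts to the standard identity relating closure in a subspace to closure in the ambient space: for $D \se V$ one has $\cl D^V = \cl D \cap V$. This holds for any subset $V$ (here open) and any $D \se V$, and it is the only topological input beyond the fact that in a Hausdorff space compact sets are closed. I would also use that compactness is an absolute property, so that $\cl D^V$ is compact as a subspace of $V$ if and only if it is compact as a subspace of $X$; this lets me pass freely between "bounded in $V$" and statements about $\cl D \cap V$ inside $X$.

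For part a), I would suppose $D$ is bounded in $V$, i.e. $\cl D^V$ is compact. Since $X$ is Hausdorff, $\cl D^V = \cl D \cap V$ is then a closed subset of $X$. Because $D \se \cl D \cap V$ and this set is closed in $X$, the smallest closed set containing $D$ satisfies $\cl D \se \cl D \cap V$; the reverse inclusion $\cl D \cap V \se \cl D$ is trivial. Hence $\cl D = \cl D \cap V = \cl D^V$, and in particular $\cl D \se V$.

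For part b), I would suppose $D$ is bounded in $X$ and $\cl D \se V$. Then $\cl D^V = \cl D \cap V = \cl D$, and $\cl D$ is compact by hypothesis; so $\cl D^V$ is compact, i.e. $D$ is bounded in $V$.

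The only point that requires genuine care is the use of the Hausdorff property in part a): it is precisely what converts "$\cl D^V$ compact" into "$\cl D^V$ closed in $X$", which is what forces $\cl D \se \cl D^V$ and collapses the two closures. Without a separation hypothesis this step would fail, so I expect that invocation (together with noting that compactness does not depend on the ambient space) to be the substantive step; everything else is formal manipulation of the subspace-closure identity.
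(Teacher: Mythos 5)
Your proof is correct and follows essentially the same route as the paper: both arguments hinge on the fact that $\cl D^V$ is compact regardless of ambient space, hence closed in $X$ (by Hausdorffness), which forces $\cl D \se \cl D^V$ and collapses the two closures; part b) is the same triviality in both. Your explicit use of the identity $\cl D^V = \cl D \cap V$ is just a packaged form of the inclusions $D \se \cl D^V \se \cl D$ that the paper manipulates directly.
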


\begin{proof}
\begin{itemize}
\item[a)]
If $D$ is bounded in $V$(with the subspace topology) then  $\cl D^V$ is a compact subset of $V$, 
and  so is  a compact in $X$, hence, closed in $X$. That is, $\cl{ \cl D^V} =  \cl D^V$.
Since clearly $\cl D^V \se \cl D$ and $ D \se \cl D^V$, we have:
$$ \cl D \se  \cl{ \cl D^V} =  \cl D^V \se \cl D.$$
It follows that $ \cl D =  \cl D^V  \se V$.
\item[b)]
Since $\cl D$ is compact in $X$ it is easy to see that $\cl D^V$ is compact in $V$.
\end{itemize}
\end{proof}

\begin{remark} \label{bddInV}
Let  $V  \in O^*(X)$ be endowed with the subspace topology.  From Lemma \ref{76a} we see that
$D$ is bounded in $V$ iff $\cl D \se V$. Hence, $D$ is unbounded in $V$ iff 
$\cl D \cap (X \sm V) \neq \O$.
\end{remark}

The next two results give relations between being a solid set in a subspace of $X$ and 
being a solid set in $X$. 

\begin{lemma} \label{LeSolidInV}
Let $X$ be locally connected.
Let $C \se V, \ C \in \csx, \ V \in \ox$. Then $C \in \cs(V)$,  i.e. connected components
of $V \sm C$ are unbounded subsets of $V$.
\end{lemma}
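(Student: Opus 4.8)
The plan is to argue by contradiction, reducing the statement about components of $V \sm C$ to the solidity of $C$ in the ambient space $X$. First I would dispose of the connectedness requirement: connectedness is intrinsic, so $C$, being connected as a subspace of $X$, is automatically connected as a subspace of $V$. Since $C$ is closed in $X$ and $C \se V$, it is also closed in $V$. Thus $C$ is a closed connected subset of $V$, and everything reduces to showing that every connected component of $V \sm C$ is unbounded in $V$ (with the subspace topology).

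Next I would set up the topology. As $V$ is open and $C$ is closed, $V \sm C = V \cap (X \sm C)$ is open in $X$; because $X$ is locally connected, each component $B$ of $V \sm C$ is open in $X$, hence open in $X \sm C$. Now suppose toward a contradiction that some component $B$ of $V \sm C$ is bounded in $V$. By Lemma \ref{76a}(a) this gives $\cl{B}^V = \cl B$ compact and, crucially, $\cl B \se V$. The key step is then to identify $B$ with a \emph{whole} component of $X \sm C$. It is already open in $X \sm C$; for closedness, note that $B$, being a connected component of the space $V \sm C$, is closed in $V \sm C$, so $\cl B \cap (V \sm C) = B$. Since $\cl B \se V$, the left-hand side equals $\cl B \cap (X \sm C)$, whence $\cl B \cap (X \sm C) = B$, i.e. $B$ is closed in $X \sm C$ as well. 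Being clopen and connected in $X \sm C$, the set $B$ is a component of $X \sm C$.

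To finish, I would invoke the hypothesis that $C \in \csx$, i.e. $C$ is solid in $X$: by Definition \ref{debddset} every component of $X \sm C$ is unbounded in $X$, so in particular $\cl B$ is not compact. This contradicts the compactness of $\cl B$ obtained above. Hence no component of $V \sm C$ can be bounded in $V$, which is exactly the assertion $C \in \cs(V)$. The one delicate point is the clopen argument showing that a bounded-in-$V$ component of $V \sm C$ is an entire component of $X \sm C$; the inclusion $\cl B \se V$ furnished by Lemma \ref{76a} is precisely what makes the closure computation collapse to $\cl B \cap (X \sm C) = B$. Everything else is routine bookkeeping with closures and the definitions of boundedness in $X$ versus in $V$.
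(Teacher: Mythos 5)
Your proof is correct and follows essentially the same route as the paper's: argue by contradiction, use Lemma \ref{76a} to get $\cl B \se V$ (with $\cl B$ compact) for a component $B$ of $V \sm C$ that is bounded in $V$, show that $B$ is then an entire component of $X \sm C$, and contradict the solidity of $C$ in $X$. The only cosmetic difference is in how you identify $B$ as a full component: you use that $B$ is closed in $V \sm C$ (components are closed) plus openness from local connectedness to get $B$ clopen in $X \sm C$, whereas the paper reaches the same conclusion by inlining a disconnection argument inside the component $U$ of $X \sm C$ containing $B$.
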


\begin{proof}
Suppose $V \sm C = \bsc_{s \in S} V_s$ is the decomposition into connected components 
in $V$.  
Note that 
$$ X \sm C = (X \sm V) \sc (V \sm C) =(X \sm V)  \sc \bsc_{s \in S} V_s   $$
Assume that there exists $r \in S$ such that $V_r$ is bounded in $V$.  
By Lemma \ref{76a}  $\cl{ V_r} \cap (X \sm V) =\O$.
Also, by Remark \ref{OpenComp} $\cl{V_r} \cap V_s = \O$ for each $s \ne r$.
Thus, $ \cl{ V_r} \se C \sc V_r$.
Since $V_r \se X \sm C$ and $V_r$ is connected in $X$, assume  that $V_r$ 
is contained in a component $U$
of $X \sm C$.
Then $ V_r \se U \cap \cl{V_r}  \se U \cap (C \sc V_r) \se V_r$, so $ U \cap \cl{V_r}  \se V_r$.
Thus, $U =(U \cap \cl{V_r}) \sc (U \sm \cl V_r) =  V_r \sc (U \sm \cl V_r)$ is the disconnection of $U$, unless $U = V_r$.
This shows that $U=V_r$ is a component of $X \sm C$. But this is impossible, since
$V_r$ is bounded and $C$ is solid.
\end{proof}

\begin{lemma}
Let $A \se V, \ V \in \bosx$. If $A \in \as(V) $ then $A \in \basx$.
\end{lemma}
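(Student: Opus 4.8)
The plan is to check the three requirements for membership in $\basx = \bosx \cup \ksx$ one at a time: that $A$ is bounded in $X$, that $A$ is open or compact in $X$, and that $A$ is solid in $X$ (Definition \ref{debddset}). This is the reverse direction to Lemma \ref{LeSolidInV}. Boundedness is immediate, since $A \se V$ and $V \in \bosx$ is bounded, so $\cl A \se \cl V$ is compact and hence $\cl A$ is compact. For the open/compact dichotomy I would follow the way $A$ sits in $\as(V)$: if $A$ is open in $V$ then, $V$ being open in $X$, $A$ is open in $X$; if instead $A$ is compact in $V$, then compactness is intrinsic and $A$ is compact in $X$. In either case the only remaining requirement is solidity in $X$, which is the real content of the lemma.

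For solidity I would first note that $A$ is connected (being solid in $V$) and then show every component $P$ of $X \sm A$ is unbounded in $X$. The key claim is that each such $P$ must meet $X \sm V$. I would argue by contradiction: suppose $P \se V$. Then $P \se V \sm A$, and the component of $V \sm A$ containing $P$ is a connected subset of $X \sm A$ containing $P$, so by maximality it equals $P$; thus $P$ is a component of $V \sm A$. Since $A$ is solid in $V$, this component is unbounded in $V$, so by Remark \ref{bddInV} $\cl P$ meets $X \sm V$ at some point $q$. But $A \se V$ gives $X \sm V \se X \sm A$, so $q \in \cl P \cap (X \sm A)$; as components are closed in the subspace $X \sm A$, this intersection is exactly $P$, whence $q \in P \se V$, contradicting $q \in X \sm V$.

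With the claim established, take any component $P$ of $X \sm A$ and a point $q \in P \cap (X \sm V)$ lying in a component $E$ of $X \sm V$. Then $E$ is connected, $E \se X \sm V \se X \sm A$, and $E$ meets $P$, so by maximality $E \se P$. Because $V$ is solid, $E$ is unbounded in $X$, and therefore so is $P$. Hence $X \sm A$ has only unbounded components, so $A$ is solid in $X$. Combined with the first paragraph, $A \in \bosx \se \basx$ in the open case and $A \in \ksx \se \basx$ in the compact case.

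The step I expect to be the main obstacle is the compact case: being \emph{closed} in $V$ does not by itself make $A$ closed, hence compact, in $X$, because the $X$-closure of $A$ may poke out through $\partial V$. This is precisely where boundedness in $V$ is needed: by Lemma \ref{76a} a set that is closed and bounded in $V$ satisfies $\cl A = \cl A^V = A \se V$, so it is genuinely compact in $X$. The other point requiring care, used silently in the solidity argument, is that one relies on $V$ being \emph{solid}; without it a component of $X \sm V$ could be bounded and the conclusion that $P$ is unbounded would fail.
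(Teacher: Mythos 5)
Your core solidity argument is correct and rests on the same two facts as the paper's own proof: the components of $V \sm A$ are unbounded in $V$, so by Remark \ref{bddInV} their closures meet $X \sm V$; and the components of $X \sm V$ are unbounded in $X$ because $V$ is solid. The difference is organizational: the paper writes $X \sm A = (X \sm V) \sqcup (V \sm A)$ and exhibits it explicitly as a union of unbounded connected sets of the form $F_i \cup \bsc_{s \in S_i} E_s$, whereas you argue one component of $X \sm A$ at a time, using maximality of components and the fact that components are closed in the subspace $X \sm A$. Both are valid; yours replaces the paper's bookkeeping with a short contradiction argument.

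Where you differ substantively from the paper is the open/compact dichotomy, and here your instincts are better than the text's: the paper's proof checks only that $A$ is bounded, connected, and has all complementary components unbounded, and never verifies that $A$ is open or compact in $X$, which membership in $\basx = \bosx \cup \bcsx$ requires. Your worry about the compact case is genuine. In fact, with $\as(V)$ read literally as the closed-or-open solid subsets of $V$ (no boundedness in $V$ assumed), the statement fails: for $X = \R^2$ and $V$ the open unit disk, the open diameter $A = \{(t,0) : |t| < 1\}$ is closed in $V$ and solid in $V$, and it is bounded and solid in $X$, yet it is neither open nor compact in $X$, so $A \notin \basx$. Thus the hypothesis must be read as ``$A$ open in $V$ or $A$ compact,'' exactly as in your first paragraph. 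The one inaccuracy in your write-up is the closing claim that boundedness in $V$ ``is needed'' and supplied via Lemma \ref{76a}: boundedness in $V$ is not part of $A \in \as(V)$ and cannot be derived from it (the diameter example is unbounded in $V$); it is an extra assumption, equivalent to taking $A$ compact from the start. With that reading made explicit, your proof is complete --- and in fact more complete than the paper's.
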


\begin{proof}
If $ A \in \as(V)$ then $A$ is connected in $X$ and bounded in $X$. Since $V \in \bosx$, we 
may write $X \sm V = \bsc_{i \in I} F_i$ where $F_i$ are unbounded connected components. 
Let $V \sm A = \bsc_{s \in S} E_s$ be the decomposition into connected components in $V$.
Each $E_s$ is unbounded in $V$, i.e.,  $\cl{ E_s} \cap (X \sm V) \ne \O$, hence, 
$\cl{ E_s} \cap F_i \ne \O$ for some $ i \in I$. Let $I' = \{ i \in I: \ F_i \cap \cl{ E_s} \ne \O 
\mbox{  for some   } E_s \}$, and for $i \in I'$ let $S_i = \{ s \in S : \ \cl{ E_s} \cap F_i \ne \O \}$. 
For $i \in I'$ the set $ F_i \cup \bsc_{s \in S_i} E_s$ is unbounded and connected.
Since 
$$ X \sm A = (X \sm V) \sc (V \sm A) = 
\bsc_{i \in I'} (F_i \cup \bsc_{s \in S_i} E_s)  \sc \bsc_{i \in I \sm I'} F_i $$
is a  disjoint union of unbounded connected sets, the proof is complete.
\end{proof}

Now we shall take a closer look at the structure of  an open solid or semi-solid set that contains a
closed solid or closed connected set.

\begin{lemma} \label{LeDecompV}
Let $X$ be locally compact, connected, locally connected.
Let $ C \se V,  \ C \in \ksx$. 
\begin{enumerate}[label=(\roman*),ref=(\roman*)]
\item
Suppose  $V \in \bosx$. If $V \sm C$ is connected then 
$$V = C \sc W  \mbox{     where     }  W \in \bossx.$$
If  $ V \sm C$ is disconnected then 
$$ V = C \sc \bsc_{i=1}^n V_i \mbox{     where     } V_i \in \bosx, \ i=1, \ldots, n. $$
\item
Suppose $ V \in \bossx$.  Then
$$ V = C \sc \bsc_{i=1}^n V_i \mbox{     where     } V_i \in \bossx, \ i=1, \ldots, n. $$
\end{enumerate}
\end{lemma}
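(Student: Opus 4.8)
The plan is to decompose $V \sm C = \bsc_{j} V_j$ into connected components and to read off everything from two \emph{local} facts about each $V_j$. Since $V\sm C = V\cap(X\sm C)$ is open and $X$ is locally connected, every $V_j$ is open, and being contained in the bounded set $V$ each $V_j$ is bounded; also $V = C \sc \bsc_j V_j$ because $C\cap V_j=\O$. First I would record that $\partial V_j \se C\cup\partial V$: indeed $V_j$ is clopen in $V\sm C$, so $\partial V_j\cap(V\sm C)=\O$, and $\cl{V_j}\se\cl V$ forces $\partial V_j\cap(X\sm V)\se\partial V$. From this I would prove $(1)$ $\cl{V_j}\cap C\ne\O$ and $(2)$ $\cl{V_j}\cap(X\sm V)\ne\O$ for every $j$. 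For $(1)$, if $\cl{V_j}$ missed $C$ then $\partial V_j\se\partial V$, so $V_j$ would be clopen in the connected set $V$, which is impossible. For $(2)$, if $\cl{V_j}$ missed $X\sm V$ then $\partial V_j\se C$, so $V_j$ would be clopen in the component $H$ of $X\sm C$ that contains it, whence $V_j=H$; but $C$ solid makes $H$ unbounded (Remark \ref{ReFinNoComp}, Lemma \ref{CmpCmplBdA}) while $V_j$ is bounded, a contradiction.

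Next I would establish finiteness of the number of components. Using Lemma \ref{opensolid} choose $U_0\in\bosx$ with $C\se U_0\se\cl{U_0}\se V$; then $\partial U_0$ is compact and $\partial U_0\se V\sm C$. By $(1)$ each $V_j$ meets $U_0$ (it has a limit point in $C\se U_0$), and by $(2)$ each $V_j$ meets $X\sm\cl{U_0}$ (it has a limit point on $\partial V$, which lies outside $\cl{U_0}\se V$); being connected, each $V_j$ therefore meets $\partial U_0$. Thus $\{\,V_j\cap\partial U_0\,\}_j$ is a partition of the compact space $\partial U_0$ into nonempty relatively open sets, hence finite; so there are only finitely many components $V_1,\dots,V_n$.

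For the classification I would analyze $X\sm V_i$. Write $X\sm V = \bsc_k G_k$ into components; this is a finite family by Lemma \ref{LeNoUnbdComp} together with the (semi-)solidity of $V$. Then $X\sm V_i = C\sc\bsc_k G_k\sc\bsc_{j\ne i}V_j$ is a disjoint union of finitely many connected pieces, so it has finitely many components (each a union of pieces); by Remark \ref{ReFinNoComp} this already gives part (ii), that every $V_i\in\bossx$. For part (i), $V$ is solid, so every $G_k$ is unbounded. In the disconnected case $n\ge2$: by $(1)$ every $V_j$ lies in the component $P$ of $X\sm V_i$ containing $C$, and for any $j\ne i$ fact $(2)$ attaches that $V_j$ to some unbounded $G_k$, so $P$ is unbounded; the remaining components consist solely of unbounded $G_k$'s, so $X\sm V_i$ has no bounded component and $V_i$ is solid, i.e. $V_i\in\bosx$. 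In the connected case $n=1$, with $W=V\sm C$, there is no bridging sibling $V_j$, so $C$ is separated from every $G_k$ and is itself a bounded component of $X\sm W$; hence $W$ is semi-solid but not solid, i.e. $W\in\bossx$.

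\textbf{Main obstacle.} I expect fact $(2)$, that every component of $V\sm C$ reaches $X\sm V$, to be the crux. It is the only place where solidity of $C$ enters, and it is exactly what makes the disconnected components solid (by bridging $C$ to the unbounded exterior through a sibling component) while isolating $C$ in the connected case. Once $(1)$ and $(2)$ are in hand, the compactness count of $\partial U_0$ and the merging of the finitely many pieces inside $X\sm V_i$ are routine.
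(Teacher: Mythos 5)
Your proof is correct, and its skeleton matches the paper's: decompose $V \sm C$ into connected components, establish that each component's closure meets both $C$ and $X \sm V$, show there are finitely many components, and then classify each piece $V_i$ by writing $X \sm V_i$ as a disjoint union of the connected pieces $C$, the components $G_k$ of $X \sm V$, and the sibling components, with the siblings bridging $C$ to unbounded $G_k$'s in the disconnected case. Where you differ is in the technical underpinnings. The paper works with components of $V \sm C$ taken in the subspace $V$ (these coincide with yours, since connectedness is intrinsic) and imports its ingredients wholesale: your fact $(2)$ is exactly Lemma \ref{LeSolidInV} combined with Remark \ref{bddInV}, and finiteness of the number of components is obtained from Remark \ref{ReFinNoComp} (ultimately Lemma \ref{LeNoUnbdComp}) applied inside the locally compact, locally connected space $V$. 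You instead prove facts $(1)$ and $(2)$ by hand via the boundary inclusion $\partial V_j \se C \cup \partial V$ and clopen arguments, and you get finiteness by counting the traces $V_j \cap \partial U_0$ on the compact boundary of an interpolated set $U_0$ (for which Lemma \ref{LeConLC} would suffice; solidity of $U_0$ from Lemma \ref{opensolid} is never used). Your counting argument is in effect the proof of the paper's Lemma \ref{LeAaCompInU}, inlined. So your version is more self-contained --- it bypasses the subspace-transfer lemmas entirely --- at the cost of redoing work the paper has already banked, while the paper's proof is shorter given that infrastructure. One small point of phrasing: in the connected case, what makes $C$ a component of $X \sm W$ is not literally the absence of siblings but the fact that $C = (X \sm W) \cap V$ is relatively clopen in $X \sm W$ (equivalently, $C$ is separated from each closed set $G_k$, hence from any union of them); your separation observation supplies exactly this, so the conclusion stands.
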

 
\begin{proof} 
\begin{enumerate}[label=(\roman*),ref=(\roman*)]
\item
Suppose  $V \in \bosx$ and let 
$$X \sm V = \bsc_{s \in S} F_s$$
be the decomposition 
into connected components, so $S$ is a finite index set and  each $F_s$ is unbounded.
If $V \sm C$ is connected then taking $W = V \sm C$ we see that
$$X \sm W =  X \sm (V \sm C) = (X \sm V) \sc C =C \sc \bsc_{s \in S} F_s $$
has finitely many components, i.e. $W \in \bossx$. \\
Now assume that $V \sm C$ is not connected.  
By Lemma \ref{LeSolidInV}  and Remark \ref{bddInV} $C \in \cs(V)$ and is bounded in $V$.
The set $ V \sm C$ is also disconnected in $V$, so using Remark \ref{ReFinNoComp} let
$$V \sm C = \bsc_{i=1}^n V_i, \ n \ge 2 $$ 
be the decomposition into connected (unbounded in $V$)
components in $V$. Each $V_i$ is connected in $X$.
To show that each $V_i \in \bosx$ 
we only need to check that the components of $X \sm V_i$ are unbounded.
For simplicity, we shall show it for $V_1$. For $2 \le j \le n$ 
by  Lemma \ref{LeSolidInV} and Remark \ref{bddInV}
$\cl{ V_j}$ intersects $X\sm V$, hence, intersects some $F_s$. 
Let  $S_1 = \{ s\in S:  F_s \cap \cl{ V_j} \neq \O \mbox{  for some  }  2 \le j \le n \} $.
By Remark \ref{OpenComp} and Lemma \ref{CmpCmplBdA}
the set $(\bsc_{s \in S_1} F_s \sc C \sc \bsc_{j=2}^n V_j )$ is connected. 
It is also unbounded.
Now 
\begin{eqnarray*}
X \sm V_1 &=&  (X \sm V) \sc (V \sm V_1)  \\
&=& \bsc_{s \in S} F_s \sc C \sc \bsc_{j=2}^n V_j \\
&=& (\bsc_{s \in S_1} F_s \sc C \sc \bsc_{j=2}^n V_j ) \sc \bsc_{s \in S \sm S_1} F_s
\end{eqnarray*} 
Since $X \sm V_1$ is the disjoint union of connected unbounded sets, it follows that
$V_1$ is solid.
\item
Suppose $V \in \bossx$ and let $\bsc_{j=1}^k F_j$ be the 
components of $X \sm V$. 
By Lemma \ref{LeSolidInV}  and Remark \ref{bddInV} $C \in \cs(V)$ and is bounded in $V$.
Let
$$V \sm C = \bsc_{i=1}^n V_i, \ n \ge 1 $$
be the decomposition into connected components in $V$ according to 
Remark \ref{ReFinNoComp}.  
Each $V_i$ is connected in $X$, and 
to show that each $V_i  \in \bossx$ we only need to check that $X \sm V_i$ has finitely
many components. For simplicity, we shall show it for $V_1$. We have:
$$ X \sm V_1 = (X \sm V) \sc (V \sm V_1)  = \bsc_{j=1}^k F_j \sc C \sc \bsc_{i\ne 1} V_i  .$$
Since $X \sm V_1$ is a finite disjoint union of connected sets, the number of components
of $ X \sm V_1$ is finite, so $V_1 \in \bossx$.
\end{enumerate} 
\end{proof} 

\begin{lemma} \label{LeDecompU}
Let $X$ be locally compact, connected, locally connected.
Suppose $ C \se U, \ \  C \in \bccx, \ \ U \in \bosx$. 
If $\  U \sm \tilde C$ is disconnected then
$$ U = C \sc \bsc_{s \in S} V_s, \ \ \ V_s \in \bosx.$$
If $ \  U \sm \tilde C$ is connected then
$$ U = C \sc \bsc_{s \in S} V_s \sc W,  \ \ \ V_s \in \bosx, \ W \in \bossx.$$
\end{lemma}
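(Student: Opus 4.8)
The plan is to reduce the statement to Lemma \ref{LeDecompV}(i) by passing from $C$ to its solid hull $\tilde C$, and then re-expanding $\tilde C$ as $C$ together with the bounded components of $X \sm C$.

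First I would record the structure of $\tilde C$. Since $C \in \bccx$, Lemma \ref{PrSolidHuLC} shows that $\tilde C$ is a compact solid set, so $\tilde C \in \ksx$, and by Definition \ref{solid hull} we have $\tilde C = C \sc \bsc_{r \in R} B_r$, where $\{B_r\}_{r \in R}$ are the bounded components of $X \sm C$. Each $B_r$ is open, being a component of the open set $X \sm C$ in a locally connected space, and by Lemma \ref{SolidCompoLC} it is a bounded solid set; hence $B_r \in \bosx$. These pieces are pairwise disjoint and disjoint from $C$, so the union above is genuinely disjoint.

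The hard part will be the containment $\tilde C \se U$, which is what makes solidity of $U$ indispensable. I would argue as follows: $X \sm U \se X \sm C$, and every component of $X \sm U$ is unbounded because $U$ is solid. If some bounded component $B_r$ of $X \sm C$ met $X \sm U$, then the component of $X \sm U$ through that common point, being connected and contained in $X \sm C$, would lie entirely inside the component $B_r$; but that component is unbounded while $B_r$ is bounded, a contradiction. Hence every $B_r \se U$, and therefore $\tilde C \se U$.

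Once this containment is in place, I would apply Lemma \ref{LeDecompV}(i) to the pair $\tilde C \se U$, which is legitimate since $\tilde C \in \ksx$ and $U \in \bosx$. Its dichotomy matches the present one verbatim (with $\tilde C$ in place of the ``$C$'' there): if $U \sm \tilde C$ is connected then $U = \tilde C \sc W$ with $W \in \bossx$, and if $U \sm \tilde C$ is disconnected then $U = \tilde C \sc \bsc_{i=1}^n V_i'$ with each $V_i' \in \bosx$. Substituting $\tilde C = C \sc \bsc_{r \in R} B_r$ and relabeling the family $\{B_r\}_{r\in R} \cup \{V_i'\}_{i=1}^n$ (respectively $\{B_r\}_{r\in R}$) as $\{V_s\}_{s \in S}$ yields $U = C \sc \bsc_{s \in S} V_s$ in the disconnected case and $U = C \sc \bsc_{s \in S} V_s \sc W$ in the connected case, every piece being bounded open solid except the single bounded open semi-solid $W$. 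This is exactly the asserted decomposition; apart from the containment $\tilde C \se U$, the whole argument is bookkeeping built on the two cited lemmas.
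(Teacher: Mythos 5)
Your proposal is correct and follows essentially the same route as the paper's proof: note $\tilde C \in \bcsx$ with $\tilde C \se U$, apply Lemma \ref{LeDecompV}(i) to the pair $\tilde C \se U$, then re-expand $\tilde C = C \sc \bsc V_\alpha$ using Lemma \ref{SolidCompoLC} and relabel. The only cosmetic difference is that you prove the containment $\tilde C \se U$ by a direct component argument, whereas the paper simply cites Lemma \ref{PrSolidHuLC} (parts \ref{part1} and \ref{part2}: $C \se U$ gives $\tilde C \se \tilde U = U$ since $U$ is solid); both justifications are valid.
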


\begin{proof}
Note first that $\tilde C \in \bcsx$ and $\tilde C \se U$ by Lemma \ref{PrSolidHuLC}.  
Assume that $U \sm \tilde C$ is disconnected.  By Lemma \ref{LeDecompV} we may write
$ U = \tilde C \sc \bsc_{i=1}^n U_i, \ \ U_i \in \bosx$.  But
$\tilde C = C \sc \bsc_{\alpha} V_{\alpha}$, where $V_{\alpha}$ are bounded components 
of $ X \sm C$, so  by Lemma  \ref{SolidCompoLC} each $V_{\alpha} \in \bosx$. After
reindexing, one may write $$ U = C \sc \bsc_{s \in S} V_s, \ \ \ V_s \in \bosx.$$ 
The proof for the case  when 
$U \sm \tilde C$ is connected follows  similarly from  Lemma \ref{LeDecompV}. 
\end{proof}

\begin{lemma} \label{finiteT}
Let $X$ be locally compact, connected, locally connected.
Suppose that 
\[ V = \bsc_{j=1}^m C_j \sc \bsc_{t \in T} U_t \]
where $V \in \bossx, \ C_j  \in \bcsx, \ U_t \in \bocx$. Then $T$ is finite.
\end{lemma}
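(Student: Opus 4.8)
The plan is to set $K = \bsc_{j=1}^m C_j$ and reduce the statement to the purely topological claim that the open set $V \sm K$ has only finitely many connected components. Indeed, since the $U_t$ are pairwise disjoint, open, and connected with $\bsc_{t \in T} U_t = V \sm K$, each $U_t$ is open in $V\sm K$ and also closed in $V\sm K$ (its complement there is $\bsc_{s \ne t} U_s$), hence clopen; being connected, each $U_t$ is therefore a single connected component of $V \sm K$, and $\{U_t\}_{t \in T}$ is exactly the decomposition of $V \sm K$ into components. So $T$ is finite as soon as $V \sm K$ has finitely many components, and this is what I would prove.

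To establish this finiteness I would peel off the compact solid sets $C_1, \ldots, C_m$ one at a time, using part (ii) of Lemma \ref{LeDecompV}, which says that removing a single compact solid set from a bounded open semi-solid set leaves only finitely many pieces, each again in $\bossx$. Concretely, I would prove by induction on $l$ that $V \sm \bsc_{j=1}^l C_j$ is a finite disjoint union of sets in $\bossx$. The base case $l=0$ is just $V \in \bossx$. For the inductive step, writing $V \sm \bsc_{j=1}^l C_j = \bsc_{r=1}^{N} W_r$ with each $W_r \in \bossx$, the set $C_{l+1}$ is connected and disjoint from $C_1, \ldots, C_l$, so $C_{l+1} \se W_{r_0}$ for a unique $r_0$; applying Lemma \ref{LeDecompV}(ii) to $C_{l+1} \se W_{r_0}$ writes $W_{r_0} \sm C_{l+1}$ as a finite disjoint union of sets in $\bossx$, and replacing $W_{r_0}$ by these pieces exhibits $V \sm \bsc_{j=1}^{l+1} C_j$ as a finite disjoint union in $\bossx$.

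Taking $l=m$ yields $V \sm K = \bsc_{r=1}^{N} W_r$ with finitely many $W_r \in \bossx$, each in particular open and connected, and, as above, each clopen in $V\sm K$, hence a component. Since $\{U_t\}_{t \in T}$ and $\{W_r\}_{r=1}^{N}$ are both the decomposition of $V \sm K$ into its connected components, they coincide, so $|T| = N < \infty$.

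I do not expect a serious obstacle, since the real work is done by Lemma \ref{LeDecompV}(ii); the points requiring care are the bookkeeping in the induction—verifying that each $C_{l+1}$ is absorbed into exactly one current piece and that the counts stay finite—and the identification of the given decomposition of $V$ with the component decomposition of $V\sm K$, which lets the two indexed descriptions of the same components be matched. The only hypotheses to check honestly at each stage are that $C_{l+1} \in \bcsx = \ksx$ and that the ambient piece $W_{r_0}$ genuinely lies in $\bossx$, both guaranteed by the assumptions and the inductive hypothesis.
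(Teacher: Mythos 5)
Your proof is correct and takes essentially the same route as the paper: both arguments peel off the compact solid sets via part (ii) of Lemma \ref{LeDecompV} in an induction on the number of such sets, and both use the observation that disjoint open connected sets whose union is $V \sm \bsc_{j=1}^m C_j$ are exactly its connected components. The only difference is organizational: the paper inducts on the partitioned statement itself, applying the induction hypothesis to the sub-partition each piece of $V \sm C_1$ inherits, whereas you first prove the partition-free claim that $V \sm \bsc_{j=1}^m C_j$ is a finite disjoint union of sets in $\bossx$ and only then match those pieces against the family $\{U_t\}_{t \in T}$.
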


\begin{proof} 
The proof is by induction on $m$.  Let $m=1$. Using Lemma \ref{LeDecompV} we have
\[ V \sm C_1 = \bsc_{i=1}^n V_i = \bsc_{t \in T} U_t.\]
Since sets $V_i$ and $U_t$ are connected, $T$ must be finite.  Now let 
$V = \bsc_{j=1}^m C_j \sc \bsc_{t \in T} U_t $ and assume that the result holds for any 
bounded open semi-solid set which contains less than $m$ compact solid sets. 
Using Lemma \ref{LeDecompV} we see that
\[ V =  C_1 \sc \bsc_{i=1}^n V_i =C_1 \sc \bsc_{j=2}^m C_j \sc \bsc_{t \in T} U_t,\]
where $V_i \in \bossx$.
All involved sets are connected, so each set $V_i$ is the disjoint union of sets from the 
collection $\{ C_2, \ldots, C_m, U_t,  t \in T \}$. By the induction hypothesis each $V_i$ 
contains finitely many sets, and it follows that $T$ is finite.  
\end{proof}

\begin{lemma} \label{finiteSP}
Let $X$ be locally compact, connected, locally connected.
If $A = \bsc_{t \in T} A_t, \ \ A , A_t \in \basx$  with at most finitely many
$A_t \in \bcsx$ then $T$ is finite. 
\end{lemma}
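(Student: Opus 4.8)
The plan is to isolate the finitely many compact pieces and then prove that only finitely many open pieces can occur, reducing everything to Lemma~\ref{finiteT}. I first write the decomposition as
$$A = \bsc_{j=1}^m C_j \,\sc\, \bsc_{t \in T'} U_t,$$
where $C_1,\dots,C_m \in \bcsx$ are the (finitely many, by hypothesis) compact members, and the remaining $A_t$, being bounded solid but not compact, necessarily lie in $\bosx$; I relabel these as $U_t$, $t \in T'$. Since a solid set is connected, each $U_t \in \bocx$, so it suffices to show that $T'$ is finite. I also record that $A$ itself is either open solid or compact solid, and these two possibilities will be treated separately.

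If $A \in \bosx$, the argument is immediate: a bounded solid set has only finitely many complementary components, all unbounded (Remark~\ref{ReFinNoComp}), so $A$ is open semi-solid, i.e. $A \in \bossx$. Applying Lemma~\ref{finiteT} with $V = A$, the compact solid sets $C_j \in \bcsx$, and the open connected sets $U_t \in \bocx$ then yields directly that $T'$ is finite.

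The essential difficulty is the case $A \in \bcsx$, where Lemma~\ref{finiteT} cannot be applied directly because it requires an open semi-solid container rather than a compact one. To manufacture such a container, I first invoke Lemma~\ref{opensolid} with $K = A$ and $V = X$ to obtain $W \in \bosx$ with $A \se W$. Since $A \in \ksx$ and $W \in \bosx$, Lemma~\ref{LeDecompV}(i) decomposes $W$ relative to $A$: either $W = A \sc W_0$ with $W_0 \in \bossx$, or $W = A \sc \bsc_{i=1}^n V_i$ with each $V_i \in \bosx$. In both subcases the pieces of $W$ lying outside $A$ form a \emph{finite} family of open connected sets. Substituting the decomposition of $A$ then gives
$$W = \bsc_{j=1}^m C_j \,\sc\, \bsc_{t \in T'} U_t \,\sc\, (\text{finitely many open connected sets}),$$
a disjoint decomposition of $W \in \bossx$ into the compact solid sets $C_j$ together with a family of sets in $\bocx$. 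Lemma~\ref{finiteT} now bounds the total number of open connected pieces, so $T'$, and hence $T$, is finite.

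I expect the only genuine obstacle to be the construction of the open semi-solid container in the compact case; the crucial point is that Lemma~\ref{LeDecompV}(i) guarantees the extra pieces $V_i$ (or the single $W_0$) are only finitely many and are open connected, which is exactly what is needed to feed into Lemma~\ref{finiteT}. Everything else is routine bookkeeping: verifying that a non-compact member of $\basx$ is open (hence connected), and that a bounded solid set is semi-solid.
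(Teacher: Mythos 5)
Your proof is correct and takes essentially the same route as the paper: in both arguments the open case follows by noting $A \in \bossx$ and applying Lemma \ref{finiteT} directly, while the compact case uses Lemma \ref{opensolid} to produce a container $W \in \bosx$, Lemma \ref{LeDecompV} to write $W \sm A$ as finitely many open connected (semi-solid) pieces, and then Lemma \ref{finiteT} on the resulting decomposition of $W$. The only cosmetic difference is that in the open case the paper first remarks that $|T|>1$ forces at least one compact piece to exist, which disposes of the degenerate situation where your application of Lemma \ref{finiteT} would have $m=0$ compact sets (not literally covered by that lemma's induction, but trivial by connectedness of $A$).
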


\begin{proof}
Assume first that $A \in \bosx$. If the cardinality $|T| > 1$ then there must be  a compact solid set 
among $A_t$, and the result follows from Lemma \ref{finiteT}. Assume now that $A \in \bcsx$
and  write  
\[ A = \bsc_{j=1}^m C_j \sc \bsc_{t \in T} U_t,  \]
where $C_j \in \bcsx, \ U_t \in \bosx$. We need to show that $T$ is finite.
By Lemma \ref{opensolid} choose $ V \in \bosx$ such that $A \se V$. Then 
from Lemma \ref{LeDecompV} we may write $V \sm A =  \bsc_{i=1}^n V_i$, where 
$ V_i \in \bossx$. Then 
\[ V =\bsc_{j=1}^m C_j \sc \bsc_{t \in T} U_t  \sc  \bsc_{i=1}^n V_i ,\]
and by Lemma \ref{finiteT} $T$ is finite.
\end{proof}

\begin{remark}
Lemma \ref{LeNoUnbdComp}, Lemma \ref{PrSolidHuLC}, Lemma \ref{opensolid},  and Lemma \ref{LeSolidInV}    
are close to Lemmas 3.5, 3.6, 3.8, 3.9, and 4.2 in \cite{Aarnes:LC}. 
Lemma \ref{LeCleverSet} is related to a part in the proof of Lemma 5.9 in \cite{Aarnes:LC}. 
The case " $V \sm C$ is disconnected" in the first part of Lemma \ref{LeDecompV}  is Lemma 4.3  in \cite{Aarnes:LC}, and 
Lemma \ref{finiteSP} is an expanded (to compact sets as well) version of Lemma 4.4 in \cite{Aarnes:LC}. 
In all instances our proofs are modified, expanded, or different, compared to the proofs in \cite{Aarnes:LC}.  
\end{remark}
 
\section{Definition and basic properties of topological measures on locally compact spaces} \label{TM}

\begin{Definition}\label{DeTMLC}
A topological measure on $X$ is a set function
$\mu:  \cx \cup \ox \to [0,\infty]$ satisfying the following conditions:
\begin{enumerate}[label=(TM\arabic*),ref=(TM\arabic*)]
\item \label{TM1} 
if $A,B, A \sc B \in \kx \cup \ox $ then
$
\mu(A\sqcup B)=\mu(A)+\mu(B);
$
\item \label{TM2}  
$
\mu(U)=\sup\{\mu(K):K \in \bcx, \  K \se U\}
$ for $U\in\ox$;
\item \label{TM3}
$
\mu(F)=\inf\{\mu(U):U \in \ox, \ F \se U\}
$ for  $F \in \cx$.
\end{enumerate}
\end{Definition} 

\begin{remark} 
It is important that in Definition \ref{DeTMLC} condition \ref{TM1} holds for sets from 
$\kx \cup \ox$. In fact, \ref{TM1} fails on $\cx \cup \ox$. See Example \ref{puncdisk} or 
Example \ref{linetm} below. 
\end{remark}

The following result gives some immediate properties of topological measures 
on locally compact spaces. 

\begin{lemma} \label{propTMLC}
The following is true for a topological measure:
\begin{enumerate}[label=(t\arabic*),ref=(t\arabic*)]  
\item \label{l1}
$\mu$ is monotone, i.e. if $ A \se B, \ A, B \in  \cx \cup \ox$ then $\mu(A) \le \mu(B)$. 
\item \label{smooth}
If an increasing net $U_s \nearrow U$, where $U_s, U \in \ox$ then
 $\mu(U_s) \nearrow \mu(U)$.
In particular, $\mu$ is additive on $\ox$. 
\item \label{l5}
$\mu( \O) = 0$.
\item \label{kl}
If $V \sc K \se U$, where $U , V \in \ox, \ K \in \bcx$ then $\mu(V) + \mu(K) \le \mu(U).$
\item \label{l2}
If $\mu$ is compact-finite then $\mu(A) < \infty$ for each $A \in \bax$.
$\mu$ is finite (i.e. $\mu(X) < \infty$) iff $\mu$ is real-valued.
\item \label{CoRegulLC}
If $X$ is locally compact, locally connected then for any $U \in \ox$ 
$$\mu(U)=\sup \{ \mu(C): \ C \in \kox, \ C\se U\}.$$
\item \label{l8}
If $X$ is connected then 
$$ \mu(X)  = \sup\{ \mu(K) :  \ K \in \bccx \}  .$$
If $X$ is locally compact, connected, locally connected then also 
 $$ \mu(X)   = \sup\{ \mu(K) :  \ K \in \bcsx \}  .$$
\end{enumerate}
\end{lemma}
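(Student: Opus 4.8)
The plan is to derive each part from the three defining conditions together with the compactness facts of Remark~\ref{netsSETS}, proceeding in the listed order so that later items may invoke earlier ones. Monotonicity~\ref{l1} I would split into two pure and two mixed cases: for $A \se B$ both open, (TM2) gives it since every compact $K \se A$ also lies in $B$; for both closed, (TM3) gives it since every open superset of $B$ contains $A$; for $A$ closed $\se B$ open it is immediate from (TM3), as $B$ is itself an admissible outer open set for $A$; and for $A$ open $\se B$ closed I first pass to a compact $K \se A$ via (TM2) and apply the closed--closed case to $K \se B$. With monotonicity available, smoothness~\ref{smooth} follows from two inequalities: $\sup_s \mu(U_s) \le \mu(U)$ is monotonicity, and for the reverse I choose by (TM2) a compact $K \se U$ with $\mu(K)$ close to $\mu(U)$ (arbitrarily large if $\mu(U)=\infty$), then use Remark~\ref{netsSETS}(ii) to get $s_0$ with $K \se U_{s_0}$, whence $\mu(U_{s_0}) \ge \mu(K)$. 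Additivity on $\ox$ over an arbitrary disjoint family then comes from applying (TM1) to finite subfamilies and passing to the limit along the net of finite partial unions.

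Part~\ref{l5} uses (TM1) with $A=B=\O$ to give $\mu(\O)=2\mu(\O)$; since $\mu$ is not identically $\infty$ and $\O$ lies in every set, monotonicity forces $\mu(\O)<\infty$, hence $\mu(\O)=0$. For~\ref{kl}, given $V \sc K \se U$ I approximate $V$ from inside by compact $K' \se V$ via (TM2); then $K' \sc K$ is compact, disjoint, and contained in $U$, so (TM1) and monotonicity yield $\mu(K')+\mu(K)=\mu(K' \sc K)\le \mu(U)$, and taking the supremum over $K'$ finishes. Part~\ref{l2} is monotonicity once more: a bounded open set sits inside its compact closure, so compact-finiteness bounds its measure, and $\mu(X)<\infty$ is equivalent to $\mu$ being everywhere finite because $X$ dominates every member of the domain.

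For~\ref{CoRegulLC}, since $\kox \se \bcx$ and each member of $\kox$ is compact, the supremum over $\kox$ is at most that over $\bcx$, which equals $\mu(U)$ by (TM2); conversely, given compact $K \se U$, Lemma~\ref{LeCCoU} produces $C \in \kox$ with $K \se C \se U$, and monotonicity gives $\mu(K)\le \mu(C)$, so the two suprema coincide. Part~\ref{l8} is the delicate one. In the connected case I start from $\mu(X)=\sup\{\mu(K):K\in\bcx\}$, which is (TM2) at $U=X$, and the crux is to dominate an arbitrary compact $K$ by a \emph{bounded compact connected} set: local compactness traps $K$ in a bounded open set, and connectivity of $X$ lets one join the finitely many relevant pieces into a continuum $C$ of no smaller measure, so that $\mu(K)\le\mu(C)$. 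Since $\bccx \se \bcx$, this yields the first equality. For the solid refinement I enlarge such a $C \in \bccx$ to its solid hull $\tilde C$, which by Lemma~\ref{PrSolidHuLC}\ref{part2} and~\ref{part4} is a bounded compact solid set containing $C$; monotonicity gives $\mu(C)\le\mu(\tilde C)$, and as $\bcsx \se \bccx$ the solid supremum again equals $\mu(X)$.

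The main obstacle is precisely the enclosing step in~\ref{l8}: converting an arbitrary compact set into a compact \emph{connected} overset while controlling its measure. I expect this to rest on a purely topological fact---that in a connected, locally compact space every compact set lies in a bounded compact connected set---rather than on the measure axioms, and this is where connectivity is genuinely used (with local connectivity entering the solid version only through the solid-hull machinery of Lemma~\ref{PrSolidHuLC}). Everything else reduces to monotonicity plus the appropriate regularity axiom.
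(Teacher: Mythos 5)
Your treatment of parts \ref{l1}--\ref{CoRegulLC} is correct and follows essentially the same route as the paper: the four-case argument for monotonicity, the Remark~\ref{netsSETS} argument for \ref{smooth} (with additivity on $\ox$ obtained from finite partial unions), compact inner approximation for \ref{l5}, \ref{kl} and \ref{l2}, and Lemma~\ref{LeCCoU} for \ref{CoRegulLC} are exactly what the paper's terse proof intends.

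The gap is in part \ref{l8}, at the very step you flagged as the crux. The ``purely topological fact'' you propose to rest on --- that in a connected, locally compact space every compact set lies in a bounded compact connected set --- is false without local connectedness, and consequently so is your claim that local connectivity enters only through the solid-hull machinery. Counterexample: let $X \se \R^2$ be the topologist's sine curve $(\{0\}\times[-1,1]) \cup \{(x,\sin(1/x)) : 0 < x \le 1\}$ with the single point $p=(0,1)$ removed. Then $X$ is connected, locally compact and Hausdorff, but the compact set $\{(0,0),\,(1,\sin 1)\}$ is contained in no compact connected subset of $X$: any connected subset of $X$ containing both points must contain the entire arc $\{(x,\sin(1/x)) : 0 < x \le 1\}$, and if it were compact (hence closed in $\R^2$) it would contain the deleted point $p$. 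The measure-theoretic conclusion itself fails here: for the topological measure $\mu = \delta_{(0,0)}+\delta_{(1,\sin 1)}$ one has $\mu(X)=2$ while $\sup\{\mu(K) : K \in \bccx\} = 1$, since every continuum in $X$ lies entirely in the segment part or entirely in the curve part. The point is that connectivity of $X$ alone does not let you ``join the finitely many relevant pieces'': the joining requires connected open sets with compact closures, i.e. Remark~\ref{OpenComp}(b) and Lemma~\ref{prelLemma}, both of which need local connectedness. The paper's proof of \ref{l8} goes through Lemma~\ref{LeConLC} (applied with $U=X$ connected), whose hypotheses include local compactness \emph{and} local connectedness, and only then through Lemma~\ref{PrSolidHuLC} for the solid refinement (that last step of yours is fine). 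To be fair, the first displayed equality in \ref{l8} as stated in the paper also lists only ``connected,'' but the citation of Lemma~\ref{LeConLC} makes the intended standing hypotheses clear; your proposal, by contrast, explicitly asserts the stronger topological claim, and that claim is what fails. The repair is simply to invoke Lemma~\ref{LeConLC} in place of it.
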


\begin{proof}
\begin{enumerate}[label=(t\arabic*),ref=(t\arabic*)]  
\item
The monotonicity  is immediate from Definition \ref{DeTMLC}  if 
sets $A$ and $B$ are both open or both closed. It is also easy to show the monotonicity 
in the case when one of the sets is open and the other one is closed. 
\item
Suppose $U_s \nearrow U, U_s, U \in \ox$.  
Let compact $K \se U.$   By Remark \ref{netsSETS}, there is $t \in S$ such that $K \se U_s$ for all $s \ge t$.
Then $\mu(K) \le \mu(U_s) \le \mu(U)$ for all $s \ge t$, and we see from the inner regularity
(whether $ \mu(U) < \infty$ or $ \mu(U) =\infty$)  that $\mu(U_s) \nearrow \mu(U)$.
\item 
Easy to see since $ \mu$ is not identically $ \infty$. 
\item 
Easy to see from part \ref{TM2} of Definition \ref{DeTMLC}.
\item
If $U$ is an open bounded set then $ \mu(U) \le \mu(\cl U) < \infty$. The second statement is obvious.
\item 
By Lemma \ref{LeCCoU} for arbitrary $K \se U, \ K \in \cx, \ U  \in \ox$ there is 
$C \in \cox$ with $ K \se C \se U$. By monotonicity $ \mu(K) \le \mu(C) \le \mu(U).$
Then 
\begin{eqnarray*}
\mu(U) &=& \sup\{ \mu(K): \ K \in \cx, \ K \se U\} \\
&\le& \sup\{ \mu(C): \ C \in \cox, \ K \se C \se U\}  \le \mu(U)
\end{eqnarray*}
\item
Follows from Lemma  \ref{LeConLC} and Lemma \ref{PrSolidHuLC}.
\end{enumerate}
\end{proof}

\begin{proposition} \label{PrFinAddLC}
Let $X$ be locally compact.
A set function $\mu: \ox \cup \cx \rightarrow [0,\infty] $ satisfying \ref{TM2} and \ref{TM3} of 
Definition \ref{DeTMLC}  also satisfies \ref{TM1} if  
the following conditions hold:
\begin{enumerate}[label=(c\arabic*),ref=(c\arabic*)]
\item \label{usl1}
$\mu(U \sqcup V ) = \mu(U) + \mu(V) $ for any disjoint open sets $U,V$
\item \label{usl2}
$\mu(U) = \mu(K) + \mu(U \setminus K) $ whenever $K \subseteq U, \ K \in \bcx, \ U \in  \ox.$
\end{enumerate}
\end{proposition}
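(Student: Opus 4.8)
The plan is to verify \ref{TM1} by splitting into cases according to which of $A$, $B$, $A \sc B$ are open and which are compact, reducing each case to \ref{usl1} and \ref{usl2} together with the monotonicity and regularity already guaranteed by \ref{TM2}, \ref{TM3}. Throughout I would use that $X$ is Hausdorff, so two disjoint compact sets can be separated by disjoint open sets, and that by Lemma \ref{easyLeLC} every compact set lies in a bounded open set. Monotonicity is available from Lemma \ref{propTMLC}, whose proof uses only \ref{TM2} and \ref{TM3}. The two easy cases come first. If $A,B\in\ox$ then $A\sc B\in\ox$ and the identity is exactly \ref{usl1}. If one set is compact and the other open while the union is open, say $A=K\in\kx$, $B=U\in\ox$ with $K\sc U\in\ox$, then \ref{usl2} applied to $K\se K\sc U$ gives $\mu(K\sc U)=\mu(K)+\mu((K\sc U)\sm K)=\mu(K)+\mu(U)$, since $(K\sc U)\sm K=U$.

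It remains to treat the cases in which $C:=A\sc B$ is compact, where at least one of $A,B$ is compact. I would choose a bounded open $O\supseteq C$ adapted to the situation: if both $A,B$ are compact, separate them and intersect with a bounded open neighbourhood of $C$ to get disjoint bounded open $O_A\supseteq A$, $O_B\supseteq B$, and set $O=O_A\sc O_B$; if instead $A=U$ is open and $B=K$ is compact, take a bounded open $O_K\supseteq K$ and set $O=O_K\cup U$. The strategy is then to compute $\mu(O)$ two ways. On one hand, \ref{usl2} applied to $C\se O$ gives $\mu(O)=\mu(C)+\mu(O\sm C)$. On the other hand, applying \ref{usl2} to the compact piece(s) and splitting the open remainder by \ref{usl1} yields $\mu(O)=\mu(A)+\mu(B)+\mu(O\sm C)$: in the compact–compact case via $\mu(O)=\mu(O_A)+\mu(O_B)$, $\mu(O_A)=\mu(A)+\mu(O_A\sm A)$, and $O\sm C=(O_A\sm A)\sc(O_B\sm B)$; in the mixed case via $\mu(O)=\mu(K)+\mu(O\sm K)$ with $O\sm K=U\sc(O\sm C)$, a disjoint union of open sets.

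Comparing the two evaluations gives $\mu(C)+\mu(O\sm C)=\mu(A)+\mu(B)+\mu(O\sm C)$, and the step I expect to be the main obstacle is that one cannot cancel $\mu(O\sm C)$ in $[0,\infty]$ unless it is finite, so the whole argument hinges on controlling this excess term without an illegal subtraction. I would resolve it as follows. If a compact piece has infinite measure (some compact $A$ or $B$ has $\mu=\infty$), then monotonicity forces $\mu(C)=\infty$, and the right-hand side is $\infty$ too, so the identity holds trivially. Otherwise every compact piece has finite measure, and I claim $O$ can be arranged so that $\mu(O\sm C)<\infty$: outer regularity lets one pick a bounded open $O_K\supseteq K$ with $\mu(O_K)<\infty$, whence $\mu(O_K\sm K)=\mu(O_K)-\mu(K)<\infty$ by \ref{usl2}.

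The point is that this finite bound transfers to the excess. In the mixed case $O\sm C=O_K\sm C\se O_K\sm K$, so $\mu(O\sm C)\le\mu(O_K\sm K)<\infty$ by monotonicity; in the compact–compact case $\mu(O\sm C)=\mu(O_A\sm A)+\mu(O_B\sm B)<\infty$ after choosing each $O_A,O_B$ of finite measure. With $\mu(O\sm C)$ finite the cancellation is legitimate and gives $\mu(C)=\mu(A)+\mu(B)$, which together with the easy cases establishes \ref{TM1}. I expect the only genuinely delicate point to be this bookkeeping with possibly infinite values; the inclusion $O\sm C\se O_K\sm K$ is the key device that keeps the excess finite without ever assuming $\mu$ is compact-finite.
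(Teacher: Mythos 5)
Your proposal is correct, and it is a genuinely different argument from the one in the paper. Both proofs dispose of the easy cases identically (disjoint open sets by \ref{usl1}; compact-plus-open with open union by applying \ref{usl2} inside $K \sc U$). The divergence is in the cases where the union $C=A\sc B$ is compact. The paper first obtains finite additivity on $\kx$ directly from \ref{usl1} and \ref{TM3} alone, by separating disjoint compact sets with disjoint open sets and approximating from outside, and then proves the mixed case ($A$ compact, $B$ open, $A\sc B$ compact) as two separate inequalities: superadditivity $\mu(A)+\mu(B)\le\mu(A\sc B)$ via inner regularity \ref{TM2} combined with additivity and monotonicity on compacta, and the reverse inequality via an $\eps$-argument that chooses an open $U\supseteq A$ with $\mu(U)<\mu(A)+\eps$ and applies \ref{usl2} to the compact set $B\sm U$ inside the open set $B\cup U$. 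You instead run a single uniform device in both nontrivial cases: embed $C$ in an auxiliary open set $O$, evaluate $\mu(O)$ twice using \ref{usl2} and \ref{usl1}, and cancel the common excess $\mu(O\sm C)$, which you force to be finite by taking the open neighborhoods of the compact pieces to have finite measure via \ref{TM3} and Lemma \ref{easyLeLC}. Your finiteness bookkeeping is exactly what makes the cancellation legitimate in $[0,\infty]$: from $a+c=b+c$ with $c<\infty$ one gets $a=b$ even when $a$ or $b$ is infinite, which is why your mixed case needs no separate treatment of $\mu(U)=\infty$ (only $\mu(K)=\infty$ is dismissed by monotonicity, and monotonicity does indeed follow from \ref{TM2} and \ref{TM3} alone, as you claim). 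What your route buys: structural uniformity (the compact--compact and mixed cases are handled by the same double-counting identity), \ref{usl2} as the sole engine, and no use of \ref{TM2} beyond monotonicity -- no $\eps$-approximation at all. What the paper's route buys: additivity on compacta falls out of \ref{usl1} and \ref{TM3} without invoking \ref{usl2}, and no auxiliary bounded neighborhoods or finiteness bookkeeping are needed beyond the single trivial dismissal $\mu(A)=\infty$.
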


\begin{proof}
Our proof is an expanded version of the proof of Proposition 2.2 in \cite{Alf:ReprTh} where the result first appeared for 
compact-finite topological measures. 
Suppose that $\mu$ is a set function satisfying \ref{TM2}, \ref{TM3} as well as
conditions \ref{usl1} and \ref{usl2}. We need to show that $\mu$ satisfies \ref{TM1}. 
$X$ is completely regular, so it is evident from
\ref{usl1} and \ref{TM3} that $\mu$ is finitely additive on $\ox$ and on $\bcx$. Hence, we only 
need to check \ref{TM1} in the situation when $ A \in \bcx, \ B \in \ox$, and $ A \sc B$ is either 
compact or open. If  $ A \sc B$ is open then using condition \ref{usl2} we get:
$$ \mu(A \sc B) = \mu((A \sc B) \sm A) + \mu(A) = \mu(B) + \mu(A) .$$
Now suppose $A \sc B \in \bcx$. Note that \ref{TM3} implies monotonicity of $\mu$ on $\bcx$.
Let $ C \in \bcx, \ C \se B$. Then finite additivity and monotonicity of $\mu$ on $\bcx$ gives:
$$ \mu(A)  + \mu(C) = \mu(A \sc C) \le  \mu(A \sc B).$$
By \ref{TM2} 
$$ \mu(A) + \mu(B) \le \mu(A \sc B).$$
Now we will show the opposite inequality. It is obvious if $\mu(A) = \infty$, so let $\mu(A) < \infty$, and  
for $ \eps >0$ pick $ U \in \ox$ such that 
$A \se U$ and $\mu(U) < \mu(A) + \eps$. Then compact set $A \sc B$ is contained in 
the open set $B \cup U$. Also, the compact set $ (A \sc B) \sm U = B \sm U$ is contained in 
$ B \cup U$, and $ (B \cup U) \sm (B \sm U)  = U$. Applying \ref{TM2} and then condition
\ref{usl2} we see that
\begin{eqnarray*}
\mu(A \sc B) &\le& \mu(B \cup U) = \mu((B \cup U) \sm (B\sm U)) + \mu(B \sm U) \\
&=& \mu(U) + \mu(B \sm U)  \le \mu(U) + \mu(B)  \\
&\le& \mu(A)  + \mu(B) + \eps
\end{eqnarray*}
Thus, 
$$ \mu(A \sc B)  \le \mu(A) + \mu(B) .$$
This finishes the proof.
\end{proof}  

\begin{remark} \label{dualwrong}
The condition \ref{usl2} of  Proposition \ref{PrFinAddLC},  
$ \mu(U) = \mu(K) + \mu(U \sm K) $
for $U$ open and $K$ compact,  is a very useful one. 
Of course, any topological measure satisfies this condition. It is interesting to note that a 
similar condition regarding a bounded open subset of a closed set fails for topological measures, i.e. 
\[ \mu(F) = \mu(U) + \mu(F \sm U) \]
where $F$ is closed and $U$ is open bounded, 
in general is not true, as Example \ref{linetm} below shows. 
\end{remark}

\section{Solid set functions} \label{SSF}

Our goal now is to extend a set function defined on a smaller collection of subsets of $X$ than
$\ox \cup \cx$ to a topological measure on $X$. One such convenient collection is the collection of solid
bounded open and solid compact sets, and the corresponding set function is a solid set function.  

\begin{definition} \label{DeSSFLC}
A function $ \la: \basx \rightarrow [0, \infty) $ is a solid set function on $X$ if
\begin{enumerate}[label=(s\arabic*),ref=(s\arabic*)]
\item \label{superadd}
whenever  $\bsc\limits_{i=1}^n C_i \subseteq C,  \ \  C, C_i \in \bcsx$, we have 
$ \sum\limits_{i=1}^n \la(C_i) \le \la(C)$;
\item \label{regul}
$ \la(U) = \sup \{ \la(K): \ K \subseteq U , \ K \in \bcsx \}$ for $U \in \bosx$; 
\item \label{regulo}
$ \la(K) = \inf \{ \la(U) : \  K \subseteq U, \ U \in \bosx \}$ for $ K  \in \bcsx$; 
\item  \label{solidparti}
if $A = \bsc_{i=1}^n A_i, \ \ A , A_i  \in \basx$ then 
$ \la(A) = \sum\limits_{i=1}^n \la (A_i)$.
\end{enumerate}
\end{definition}

\begin{lemma} \label{PrPropSsfLC}
Let $X$ be locally compact, connected,  locally connected.
Suppose $\la$ is a solid set function on $X$. Then
\begin{itemize}
\item[(i)]
$\la(\O) = 0$ 
\item[(ii)]
if $ \bsc_{s \in S} A_s \subseteq A, $  where $A_s, A \in \basx$, then 
$\sum_{s \in S } \la(A_s)\le  \la(A)$
\end{itemize}
\end{lemma}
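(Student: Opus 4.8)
The plan is to dispatch part (i) in one line and then build part (ii) from the finite superadditivity axiom \ref{superadd}, the inner regularity \ref{regul}, and the inside-approximation Lemma \ref{LeCsInside}.

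For part (i), note that $\O$ lies in $\basx$ and admits the trivial decomposition $\O = \O \sqcup \O$ into disjoint members of $\basx$. The additivity axiom \ref{solidparti} then gives $\la(\O) = \la(\O) + \la(\O)$, and since $\la$ takes values in $[0,\infty)$ this forces $\la(\O) = 0$. (Equivalently, \ref{superadd} applied to $\O \sqcup \O \se \O$ yields $2\la(\O) \le \la(\O)$.)

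For part (ii), I would first reduce to a finite index set. Because every term $\la(A_s)$ is non-negative, $\sum_{s \in S} \la(A_s) = \sup\{ \sum_{s \in F} \la(A_s) : F \se S \text{ finite}\}$, so it suffices to prove $\sum_{s \in F} \la(A_s) \le \la(A)$ for each finite $F \se S$ and then take the supremum over $F$. Thus I may assume $S = \{1, \dots, n\}$ and must show $\sum_{i=1}^n \la(A_i) \le \la(A)$ for disjoint $A_1, \dots, A_n \in \basx$ with $\bsc_{i=1}^n A_i \se A$, $A \in \basx$. The idea is to pull each piece down to a compact solid set so that \ref{superadd} becomes applicable. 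Fix $\eps > 0$. For each $i$: if $A_i \in \bcsx$ set $K_i = A_i$; if $A_i \in \bosx$, use \ref{regul} to choose $K_i \in \bcsx$ with $K_i \se A_i$ and $\la(K_i) \ge \la(A_i) - \eps/n$. These $K_i$ are disjoint compact solid sets, and $K = \bsc_{i=1}^n K_i$ is a compact set with $K \se A$. Now I split on the type of $A$. If $A \in \bcsx$, then $\bsc_{i=1}^n K_i \se A$ is a direct instance of \ref{superadd}, giving $\sum_i \la(K_i) \le \la(A)$. If instead $A \in \bosx$, then since $K \in \bcx$ and $A \in \bosx$, Lemma \ref{LeCsInside} produces $C \in \bcsx$ with $K \se C \se A$; applying \ref{superadd} to $\bsc_{i=1}^n K_i \se C$ gives $\sum_i \la(K_i) \le \la(C)$, while $\la(C) \le \la(A)$ follows from \ref{regul} because $C$ is a compact solid subset of the open solid set $A$. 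In either case $\sum_i \la(K_i) \le \la(A)$, hence $\sum_i \la(A_i) - \eps \le \sum_i \la(K_i) \le \la(A)$. Letting $\eps \to 0$ settles the finite case, and hence the lemma.

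The only genuine obstacle is the sub-case where the ambient set $A$ is open solid: a priori the disjoint compact solid pieces $K_i$ need not be contained in a single compact solid set, so \ref{superadd} cannot be invoked directly. Lemma \ref{LeCsInside} is precisely what dissolves this difficulty, squeezing a compact solid $C$ between $\bsc_{i=1}^n K_i$ and $A$. Everything else is routine: the inner approximation from \ref{regul} and the reduction of an infinite non-negative sum to its finite partial sums.
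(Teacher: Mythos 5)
Your proof is correct and follows essentially the same route as the paper: reduce to a finite index set via the supremum of partial sums, use \ref{regul} to replace each piece by a compact solid set, handle $A \in \bcsx$ directly by \ref{superadd}, and in the case $A \in \bosx$ interpose a compact solid $C$ via Lemma \ref{LeCsInside} and finish with \ref{superadd} and \ref{regul}. The paper's own proof is just a terser version of exactly this argument, so there is nothing to add.
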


\begin{proof} 
From Definition \ref{DeSSFLC} we see that $\la(\O) = 0$.
Now let  $ \bsc_{s \in S} A_s \subseteq A, $  where $A_s, A \in \basx$.  Since 
$\sum_{s \in S } \la(A_s) = 
\sup \{ \sum_{s \in S'} \la(A_s) : \ S' \se S,  \ S' \mbox{  is finite } \}$, 
it is enough to assume that $S$ is finite. By regularity  
in Definition \ref{DeSSFLC} we may take all sets $A_s$ to be disjoint compact solid.
If also $A \in \bcsx$,  the assertion is just part \ref{superadd} of Definition \ref{DeSSFLC}.
If $A \in \bosx$ then there exists $ C \in \bcsx$ such that $\bsc_{s \in S } A_s \se C \se A$ by
Lemma \ref{LeCsInside}. 
Now the assertion follows 
from parts  \ref{superadd} and \ref{regul}  of Definition \ref{DeSSFLC}.
\end{proof}

\section{Extension to $\bassx \cup \bccx$} \label{ExtBssKc}

We start with a solid set function $ \la: \basx \rightarrow [0, \infty)$ 
on a locally compact, connected, locally connected space $X$.
Our goal is to extend
$\la$ to a topological measure on $X$. We shall do this in steps, each time extending 
the current set function to a new set function defined on a larger collection of sets.

\begin{definition}  \label{la1LC}
Let $X$ be  locally compact, connected, locally connected.
For $A \in\bassx \cup \bccx$ define 
$$  \la_1(A) = \la(\tilde{A}) - \sum_{i \in I} \la(B_i),$$
where $\{ B_i : \  i \in I\} $ is the family of bounded components of $X \setminus A$.
\end{definition}

By Lemma \ref{SolidCompoLC} each $B_i \in \basx$.
If  $A \in\bassx \cup \bccx$ 
then $\bsc_{i \in I} B_i \se \tilde A$ and  by 
Lemma \ref{PrPropSsfLC} 
$$\sum_{i \in I} \la(B_i) \le \la(\tilde A).$$ 

\begin{lemma} \label{Prla1LC}
The set function $\la_1: \bassx \cup \bccx \rightarrow [0, \infty) $ defined in
Definition \ref{la1LC} satisfies the following properties:
\begin{enumerate}[label=(\roman*),ref=(\roman*)]
\item \label{pa1}
$\la_1$ is real-valued and $\la_1 = \la$ on  $\basx.$
\item
Suppose $\bsc_{i=1}^n A_i  \sc \bsc_{s \in S} B_s  \subseteq  A$, where 
$A, A_i \in \bassx \cup \bccx$ and $B_s \in \basx$. Then 
$$ \sum_{i=1}^n \la_1(A_i) + \sum_{s \in S} \la_1(B_s) \le \la_1(A).$$
In particular, if $\bsc_{i=1}^n C_i \subseteq C$ where $C_i, C \in \bccx$ then 
$$ \sum_{i=1}^n \la_1(C_i) \le \la_1(C)$$
and  if $A \se B, \ A,B \in \bassx \cup \bccx $ then 
$$\la_1(A) \le \la_1(B).$$
\item
Suppose that $\bsc_{i=1}^n A_i  \sc \bsc_{s \in S} B_s =  A$, where 
$A, A_i \in \bassx \cup \bccx$ and $B_s \in \basx$ with at most finitely many of $B_s \in \bcsx$.
Then 
$$ \sum_{i=1}^n \la_1(A_i) + \sum_{s \in S} \la_1(B_s) = \la_1(A).$$
\end{enumerate}
\end{lemma}

\begin{proof}
\begin{enumerate}[label=(\roman*),ref=(\roman*)]
\item
Obvious from Lemma \ref{PrSolidHuLC}, Definition \ref{la1LC}, and Lemma \ref{PrPropSsfLC}.
\item
Suppose that $\bsc_{i=1}^n A_i  \sc \bsc_{s \in S} B_s  \subseteq  A$, where 
$A, A_i \in \bassx \cup \bccx$ and $B_s \in \basx$.
We may assume that $A \in \basx$, since the inequality
\begin{eqnarray} \label{number1}
\sum_{i=1}^n \la_1(A_i) + \sum_{s \in S} \la_1(B_s) \le \la_1(A)
\end{eqnarray}
is equivalent to 
\begin{eqnarray} \label{number2} 
\sum_{i=1}^n \la_1(A_i) + \sum_{s \in S} \la_1(B_s) + 
\sum_{t \in T} \la_1 (D_t) \le \la_1(\tilde{A}),
\end{eqnarray}
where $\{D_t: \  t \in T \}$ is the disjoint family of bounded components of $X \setminus A$, and
by Lemma \ref{SolidCompoLC} each $D_t \in \basx$.

The proof is by induction on $n$. 
For $n=0$ the statement is Lemma \ref{PrPropSsfLC}. 
Suppose now $n \ge 1$ and assume the result is true for any disjoint collection 
(contained in a bounded solid set)  of bounded semi-solid or compact connected sets among which 
there are less than $n$ non-solid sets. Assume now that we have $n$ disjoint sets 
$A_1, \ldots, A_n$ from the collection
$\bassx \cup \bccx$.
Consider a partial order on 
$\{ A_1, A_2, \ldots, A_n \}$ where $A_i \le A_j$ iff  $\tilde{A_i} \subseteq \tilde{A_j}$. 
(See Lemma \ref{PrSolidHuLC}.)
Let $ A_1, \ldots, A_p$ where $ p \le n$ be maximal elements in  $\{ A_1, A_2, \ldots A_n \}$
with respect to this partial order.
For a maximal element $A_k,  k \in \{ 1, \ldots, p\}$ 
define the following index sets:
$$ I_k = \{ i \in \{ p+1, \ldots, n\}: \,  A_i \mbox{ is contained in a bounded component of }  
X\setminus A_k \},  $$
$$ S_k = \{ s \in S : \ B_s \mbox{ is contained in a bounded component of }  
X\setminus A_k \}.  $$

Let $\{ E_{\alpha} \}_{\alpha \in H} $ be the disjoint family of bounded components 
of $X \sm A_k$. Then we may say that 
$$ I_k = \bsc_{\alpha \in H} I_{k,\alpha}, \ 
S_k = \bsc_{\alpha \in H} S_{k,\alpha}  $$
where 
$$ I_{k, \alpha} = \{ i \in \{ p+1, \ldots, n\} :  \ A_i \se E_\alpha \}, $$
$$ S_{k, \alpha} = \{ s \in S : \ B_s \se E_\alpha \}.  $$

The set $I_k$ and each set $I_{k, \alpha}$ has cardinality $< n$. 
The set $E_\alpha$ is a solid set according to Lemma \ref{SolidCompoLC}, and
\begin{eqnarray} \label{zv1}
\bsc_{i \in I_{k, \alpha}} A_i \sc \bsc_{s \in S_{k, \alpha}} B_s \se E_\alpha. 
\end{eqnarray}
By induction hypothesis
$$ \sum_{i \in I_{k, \alpha}} \la_1(A_i) +
\sum_{s \in S_{k, \alpha}} \la_1(B_s) \le \la_1(E_\alpha). $$
It follows that 
\begin{align*}
 \sum_{i \in I_k} \la_1(A_i) + \sum_{s \in S_k} \la_1(B_s)
&= \sum_{\alpha \in H}\left( \sum_{i \in I_{k, \alpha}} \la_1(A_i) +
\sum_{s \in S_{k, \alpha}} \la_1(B_s) \right)  \\
&\le \sum_{\alpha \in H} \la_1 (E_\alpha).
\end{align*}
Then using part \ref{pa1} and Definition \ref{la1LC} we have:
\begin{align} \label{Aktilde}
\la_1 (A_k) + \sum_{i \in I_k} \la_1(A_i) + \sum_{s \in S_k} \la_1(B_s) 
\le \la_1 (A_k) +\sum_{\alpha \in H} \la_1 (E_\alpha) =  \la_1(\tilde{A_k}).
\end{align}

Notice that $\tilde{A_1}, \ldots, \tilde{A_p}$, 
being the maximal elements,  are all disjoint by part \ref{part5} of  Lemma \ref{PrSolidHuLC}.
This also implies that the sets $I_k, \ k=1, \ldots, p$ are disjoint
(otherwise,  if $ i \in I_k$ and also $ i \in I_m, \  1\le k, m \le p$  
then $\tilde{A_k} \cap \tilde{A_m} \ne \O$).
Similarly, the sets $S_k, \ k=1, \ldots, p$ are also disjoint.
Consider the index set $$ S' = S \setminus \bsc_{k=1}^p S_k.$$
Note that  $ \{1, \ldots, n\} = \{1, \ldots, p\} \sqcup \bsc_{k=1}^p I_k $. 
Indeed, if $i \in \{1, \ldots, n\} \sm \{ 1, \ldots, p\}$ we must have $ A_i \se \tilde A_i \se \tilde A_k$ 
for some maximal element $A_k$ (where $ k \in \{1, \ldots, p\}$), and  since $A_i$ and $A_k$ are 
disjoint, $A_i$ must be contained in a bounded component of  $A_k$, i.e. $ i \in I_k$.
Now we have:
\begin{eqnarray*}
\sum_{i=1}^n \la_1(A_i)  &+& \sum_{s \in S} \la(B_s)  \\
&=& \sum_{k=1}^p \left( \la_1(A_k) + \sum_{i \in I_k} \la_1(A_i)  + 
 \sum_{s \in S_k} \la(B_s) \right) + \sum_{s \in S'} \la(B_s) \\
&\le& \sum_{k=1}^p \la(\tilde{A_k}) + \sum_{s \in S'} \la(B_s) \\
&\le& \la(\tilde{A})
\end{eqnarray*}
The first inequality is by formula (\ref{Aktilde}), and 
for the last inequality we applied Lemma \ref{PrPropSsfLC}, since 
$ \{\tilde{A_k}\}_{k=1}^p \bsc \{B_s\}_{ s \in S'}$ is
a collection of disjoint solid sets contained in the solid set $A$.
\item 
The proof is almost identical to the proof of the previous part, and we keep the same 
notations. Again, we may assume that  $A \in \basx$, 
since the inequalities (\ref{number1}) and (\ref{number2}) become equalities.
The proof is by induction on $n$, and the case $n=0$ is given by  
Lemma \ref{finiteSP} and part \ref{solidparti} of 
Definition \ref{DeSSFLC}. The inequalities in the induction step become equalities
once one observes that (\ref{zv1}) above becomes
$\bsc_{i \in I_{k, \alpha}} A_i \sc \bsc_{s \in S_{k , \alpha}} B_s =E_{\alpha}$
(note that $\tilde A_k \se A$, so $E_{\alpha} \se A$).
Since $\bsc_{k=1}^p \tilde{A_k} \sc \bsc_{s \in S'} B_s = A$,
the last inequality in the proof of the previous part becomes an equality  
by Lemma \ref{finiteSP}  and  part \ref{solidparti} of 
Definition \ref{DeSSFLC}. 
\end{enumerate}
\end{proof} 

\section{Extension to $\bcox$} \label{BCOX}

Our goal now is to extend the set function $\la_1$ to a set function $\la_2$ 
defined on $\bcox.$  Recall that $ K \in \bcox$ if $ K = \bsc_{i=1}^n K_i$ where $ n \in \N$ and  
$K_i \in \bccx$ for $ i=1, \ldots, n.$

\begin{definition}  \label{la2LC}
For $K = \bsc\limits_{i=1}^n K_i,$ where $ K_i \in \bccx$, let 
$$ \la_2(K ) = \sum_{i=1}^n \la_1(K_i). $$ 
\end{definition}

\begin{lemma} \label{Lela2LC}
The set function $\la_2$ from Definition \ref{la2LC}  satisfies the following properties:
\begin{itemize}
\item[(i)]
$\la_2$ is real-valued, $\la_2 = \la_1 $ on $\bccx$ and $\la_2 = \la$ on $ \bcsx$. 
\item[(ii)]
$\la_2$ is finitely additive on $\bcox$
\item[(iii)]
$\la_2$ is monotone on $\bcox.$
\end{itemize}
\end{lemma}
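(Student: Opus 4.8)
The plan is to first settle the only genuine subtlety, namely that $\la_2$ is well defined, and then read off the three items directly from Lemma \ref{Prla1LC}. The point to check is that the representation $K = \bsc_{i=1}^n K_i$ of a set $K \in \bcox$ as a disjoint union of compact connected sets is forced: since the $K_i$ are finitely many pairwise disjoint compact subsets of a Hausdorff space, each $K_i$ is closed in $K$ and its complement $\bsc_{j \ne i} K_j$ in $K$ is also closed, so each $K_i$ is clopen in $K$; being connected, $K_i$ must then be a connected component of $K$. Hence the $K_i$ are precisely the connected components of $K$, the representation is unique up to order, and $\la_2(K) = \sum_{i=1}^n \la_1(K_i)$ is unambiguous. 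I will identify the $K_i$ with the components of $K$ throughout. Part (i) is then immediate: real-valuedness holds because each $\la_1(K_i)$ is finite by Lemma \ref{Prla1LC}\ref{pa1} and the sum is finite; if $K \in \bccx$ its component decomposition is $K$ itself, so $\la_2(K) = \la_1(K)$; and if $K \in \bcsx$ then $K$ is in particular compact connected, so $\la_2(K) = \la_1(K) = \la(K)$, the last equality by Lemma \ref{Prla1LC}\ref{pa1} (recall $\bcsx \se \basx$).

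For part (ii) I would take disjoint $A, B \in \bcox$ with component decompositions $A = \bsc_i A_i$ and $B = \bsc_j B_j$. Because $A \cap B = \O$, the family $\{A_i\} \cup \{B_j\}$ consists of pairwise disjoint compact connected sets whose union is $A \sqcup B$, so it is exactly the component decomposition of $A \sqcup B \in \bcox$. Therefore $\la_2(A \sqcup B) = \sum_i \la_1(A_i) + \sum_j \la_1(B_j) = \la_2(A) + \la_2(B)$, and finite additivity for any number of disjoint summands follows by induction.

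For part (iii), given $K \se L$ with $K = \bsc_{i=1}^n K_i$ and $L = \bsc_{j=1}^m L_j$ written in components, I would use that each $K_i$ is connected and contained in the disjoint union $\bsc_j L_j$ of sets that are clopen in $L$, so $K_i$ lies in a single $L_j$; grouping indices by $I_j = \{\, i : K_i \se L_j \,\}$ gives $\{1,\dots,n\} = \bsc_j I_j$. For each $j$ the inclusion $\bsc_{i \in I_j} K_i \se L_j$ has all sets in $\bccx$, so the consequence recorded in the second part of Lemma \ref{Prla1LC} (namely that $\bsc_{i} C_i \se C$ with $C_i, C \in \bccx$ implies $\sum_i \la_1(C_i) \le \la_1(C)$) yields $\sum_{i \in I_j} \la_1(K_i) \le \la_1(L_j)$. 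Summing over $j$ gives $\la_2(K) = \sum_j \sum_{i \in I_j} \la_1(K_i) \le \sum_j \la_1(L_j) = \la_2(L)$. The only real obstacle is the well-definedness step; once the $K_i$ are recognised as the connected components of $K$, everything reduces to bookkeeping on top of Lemma \ref{Prla1LC}, and the two topological facts used—that finitely many disjoint compacta are clopen in their union, and that a connected set inside such a union sits in one component—are elementary.
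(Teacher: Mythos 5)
Your proof is correct and follows essentially the same route as the paper's: for monotonicity, each compact connected piece of the smaller set is placed by connectivity inside a single piece of the larger one, and the superadditivity consequence of Lemma \ref{Prla1LC} is then summed over the grouped index sets $I_j$, exactly as in the paper. Your explicit check that the representation in Definition \ref{la2LC} must be the connected-component decomposition (so that $\la_2$ is well defined) is left implicit in the paper and is a worthwhile clarification, but it does not change the substance of the argument.
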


\begin{proof} 
The first part easily follows from the definition of $\la_2$ and 
Lemma \ref{Prla1LC}. The second part is obvious. To prove the third one, 
let $ C \subseteq K, $ where 
$C, \ K \in \bcox$. Write $ C= \bsc\limits_{i=1}^n C_i, \ K = \bsc\limits_{j=1}^m K_j,$ where 
the sets $C_i (i=1, \ldots, n)$ and $ K_j (j =1, \ldots, m)$ are compact connected.
By connectivity, each $C_i$ is contained in one of the sets  $K_j.$ Consider index sets 
$ I_j  = \{ i : \ C_i \subseteq K_j \} $ 
for each $ j = 1, \ldots, m.$  By Lemma \ref{Prla1LC}  we have
$\sum_{i \in I_j} \la_1(C_i) \le \la_1(K_j).$
Then
\begin{eqnarray*}
\la_2(C) & = & \sum_{i=1}^n \la_1(C_i)  = 
\sum_{j=1}^m \sum_{i \in I_j}  \la_1(C_i) 
\le \sum_{j=1}^m \la_1 (K_j) = \la_2 (K) 
\end{eqnarray*}
\end{proof}

\section{Extension to $\ox \cup \cx$} \label{ExttoTM}

We are now ready to extend the set function $\la_2$ to a set function $\mu$ 
defined on $\ox \cup \cx.$

\begin{definition} \label{muLC}
For an open set $U$  define 
$$ \mu(U)  = \sup\{ \la_2(K) : \ K \subseteq U , \ K \in \bcox \}, $$ 
and for a closed set $F$  let 
$$ \mu(F) = \inf \{ \mu(U): \  F \subseteq U, \ U \in \ox \}.$$ 
\end{definition}

Note that $ \mu$ may assume $ \infty$.

\begin{lemma} \label{PropMuLC}
The set function $\mu$ in Definition \ref{muLC} satisfies the following properties:
\begin{enumerate}[label=(p\arabic*),ref=(p\arabic*)]
\item \label{monotoneLC}
$\mu$ is monotone, i.e. if $A \se B, \, A,B \in \ox \cup \cx$ then $ \mu(A) \le \mu(B)$.
\item \label{finiteness}
$\mu(A) < \infty$ for each $ A \in \bax$, so $ \mu$ is compact-finite.
\item \label{ineqla2}
$\mu \ge \la_2$ on $\bcox.$
\item \label{CoAppr}
Let $ K \se V, K \in \bcx, \ V \in \ox$. Then for any positive $\eps$ there exists $ K_1 \in \bcox$ 
such that 
$ K \se K_1 \se V$ and $ \mu(K_1) - \mu(K) < \eps.$
\item \label{extla2}
$\mu = \la$ on $ \basx.$
\item\label{OpenFinAddLC}
$ \mu$ is finitely additive on open sets.
\item\label{CloFinAddLC}
If $G = F \sc K$, where $G, F \in \cx, \ K \in \kx$ then $\mu(G) = \mu(F) + \mu(K).$ In particular, 
$\mu$ is finitely additive on compact sets.
\item \label{AddBox}
$\mu$ is additive on $\ox$, i.e.
if $V = \bsc\limits_{i \in I} V_i$, where $ V, \ V_i \in \ox$ for all $ i \in I$,  then
$\mu(V) = \sum\limits_{i \in I} \mu(V_i). $
\item \label{superaddF}
If $G \sc V = F$ where $ G, F  \in \cx, \ V \in \ox$ then $ \mu(G) + \mu(V) \le \mu(F).$
\item \label{superaddU}
If $G \sc V \se U$ where $ G  \in \cx,  \ V,U  \in \ox$ then $ \mu(G) + \mu(V) \le \mu(U).$
\item \label{mula1}
$\mu = \la_1$ on $\bccx$ and $\mu = \la_2$ on $\bcox$.
\item \label{regularityLC}
$\mu(U) = \sup\{\mu(C): \ C \subseteq U , \ C \in \bcx \} , \ \ \ U  \in \ox.$
\end{enumerate}
\end{lemma}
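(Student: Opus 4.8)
The plan is to establish the twelve parts in essentially the listed order, bootstrapping from Definition \ref{muLC} and from the recorded properties of $\la_1$ and $\la_2$ in Lemmas \ref{Prla1LC} and \ref{Lela2LC}. Several parts are purely formal. Monotonicity \ref{monotoneLC} is read off the defining supremum/infimum by comparing index sets in the four cases (open/open, closed/closed, and the two mixed cases). For \ref{ineqla2}, if $K \in \bcox$ with $K \se U \in \ox$ then $\la_2(K)$ is one of the terms in the supremum defining $\mu(U)$, so $\mu(U) \ge \la_2(K)$; taking the infimum over open $U \supseteq K$ gives $\mu(K) \ge \la_2(K)$. Part \ref{CoAppr} is obtained by choosing open $U$ with $K \se U \se V$ and $\mu(U) < \mu(K)+\eps$, then inserting $K_1 \in \bcox$ with $K \se K_1 \se U$ via Lemma \ref{LeCCoU} and invoking \ref{monotoneLC}. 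Finiteness \ref{finiteness} follows by applying Lemma \ref{LeCCoU} to $\cl U \se X$ (for bounded open $U$) to produce $C \in \bcox$ with $\cl U \se C$; every competitor $K \se U$ then has $\la_2(K) \le \la_2(C) < \infty$ by monotonicity of $\la_2$.

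The first substantial step is the consistency \ref{extla2}, that $\mu = \la$ on $\basx$. For $U \in \bosx$ I would show the supremum defining $\mu(U)$ may be restricted to compact solid sets: given $K \in \bcox$ with $K \se U$, Lemma \ref{LeCsInside} yields $C \in \bcsx$ with $K \se C \se U$, so $\la_2(K) \le \la_2(C) = \la(C)$, while conversely each such $C$ is an admissible competitor; hence $\mu(U) = \sup\{\la(C): C \in \bcsx, \ C \se U\} = \la(U)$ by regularity \ref{regul} of $\la$. For $C \in \bcsx$, part \ref{ineqla2} gives $\mu(C) \ge \la(C)$, while restricting the infimum defining $\mu(C)$ to open solid supersets and using the case just proved together with regularity \ref{regulo} gives $\mu(C) \le \inf\{\la(W): C \se W \in \bosx\} = \la(C)$.

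Next come the additivity statements. For finite additivity on open sets \ref{OpenFinAddLC}, any $K \in \bcox$ inside $U \sc V$ splits, by connectedness of its components, as $K = K_U \sc K_V$ with $K_U \se U$, $K_V \se V$ in $\bcox$; finite additivity of $\la_2$ (Lemma \ref{Lela2LC}) then yields both inequalities after taking suprema. Part \ref{AddBox} reduces to the finite case because any $K \in \bcox$ is compact and so meets only finitely many $V_i$. For \ref{CloFinAddLC} the device is Hausdorff separation of the compact set from the closed set inside the ambient open set: separating $K$ from $F$ by disjoint open $U_K, U_F$ inside any open $U \supseteq F \sc K$ and using \ref{OpenFinAddLC} gives $\mu(F)+\mu(K) \le \mu(U)$, hence $\le \mu(G)$; the reverse inequality follows by choosing near-optimal disjoint open neighbourhoods of $F$ and $K$ and applying \ref{OpenFinAddLC} again. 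The superadditivity parts \ref{superaddF} and \ref{superaddU} follow by the same separation together with \ref{ineqla2} (for \ref{superaddF} one uses \ref{CloFinAddLC} on $K \sc G \se F$ and then takes the supremum over $K \se V$ in $\bcox$).

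The key step, and the main obstacle, is \ref{mula1}, namely $\mu = \la_1$ on $\bccx$. For $C \in \bccx$ the inequality $\mu(C) \ge \la_1(C)$ is \ref{ineqla2}. The reverse is where the construction pays off: writing $\tilde C = C \sc \bsc_s B_s$ with $\{B_s\}$ the bounded components of $X \sm C$ (open solid by Lemma \ref{SolidCompoLC}), I would apply \ref{superaddF} with $G = C$, $V = \bsc_s B_s$, $F = \tilde C$, together with \ref{AddBox} and the identities $\mu(\tilde C) = \la(\tilde C)$, $\mu(B_s) = \la(B_s)$ from \ref{extla2}, to get
$$ \mu(C) + \sum_s \la(B_s) = \mu(C) + \mu\Big(\bsc_s B_s\Big) \le \mu(\tilde C) = \la(\tilde C),$$
that is, $\mu(C) \le \la(\tilde C) - \sum_s \la(B_s) = \la_1(C)$. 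The delicate point is handling the possibly infinite family of holes $\{B_s\}$, which is exactly why the countable additivity \ref{AddBox} is needed rather than mere finite additivity; the identification $\mu = \la_2$ on $\bcox$ then follows by writing $K = \bsc_{i=1}^n K_i$ and applying \ref{CloFinAddLC}. Finally \ref{regularityLC} is immediate from \ref{mula1} and \ref{monotoneLC}, since $\bcox \se \bcx$ and $\la_2 = \mu$ there.
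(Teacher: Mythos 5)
Your proposal is correct and follows essentially the same route as the paper's proof: the same bootstrapping order, the same key ingredients (Lemma \ref{LeCsInside} for \ref{extla2}, splitting of sets in $\kox$ by connectedness for \ref{OpenFinAddLC} and \ref{AddBox}, Hausdorff separation for \ref{CloFinAddLC}, and \ref{superaddF} plus \ref{AddBox} applied to $\tilde C = C \sqcup \bsc_s B_s$ for \ref{mula1}). The only real deviations are harmless streamlinings: in \ref{CoAppr} you invoke Lemma \ref{LeCCoU} where the paper re-derives it inline via connected components, and in \ref{mula1} you bypass the paper's $\eps$-approximation of $\tilde C$ by an open solid superset $V$ with $\la(V) < \la(\tilde C) + \eps$, instead using $\mu(\tilde C) = \la(\tilde C)$ from \ref{extla2} directly --- valid, since part \ref{finiteness} makes all the quantities finite and so justifies the subtraction.
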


\begin{proof}
\begin{enumerate}[label=(p\arabic*)]
\item
It is obvious that $\mu$ is monotone on open sets and on closed sets.
Let $ V \in \ox, F \in \cx$. The monotonicity in the case $ F \se V$ is obvious.
Suppose $ V \se F$. For any open set $U $ with $ F \se U$ we have $V \se U$, so $ \mu(V) \le \mu(U)$, 
Then taking infimum over sets $U$ we obtain $ \mu(V) \le \mu(F)$. 
\item
Let $K \in \kx$. By Lemma \ref{LeConLC}  choose $V \in \bocx$  and $C \in \bccx$ such that 
$ K \se V \se C \se U$. For any $D \in \bcox, D \se V$ by Lemma \ref{Lela2LC} we have $\la_2(D) \le \la_2(C)$, and 
$\la_2(C) < \infty$. 
By Definition \ref{muLC} $\mu(V) \le \la_2(C)$, and then $ \mu(K) \le \mu(V) \le \la_2(C) < \infty$. 
Thus, $ \mu$ is compact-finite.
If $U$ is an open bounded set then $ \mu(U) \le \mu(\cl{U}) < \infty$.
\item
Let $K \in \bcox.$ For any open set $U$ containing $K$  we have 
$\mu(U) \ge \la_2(K)$ by the definition of $\mu$. Then, 
again from the definition of $\mu$, $\mu(K)  \ge \la_2(K).$
\item
$\mu(K) < \infty$, so by Definition \ref{muLC} find $U \in \ox$ such that $ U \se V,  \, \mu(U)  - \mu(K) < \eps$. 
Let $U_1, \ldots, U_n$ be 
finitely many connected components of $U$ that cover $K$. 
By Lemma \ref{LeConLC} 
 pick $V_i \in \bocx$  such that
$K \cap U_i \se V_i \se \cl{ V_i} \se U_i$ for $ i =1, \ldots, n$. We may take 
$K_1 = \bsc_{i=1}^n \cl{ V_i}$, for $ K_1 \se V$ and 
$$ \mu(K_1) - \mu(K) < \mu(\bsc_{i=1}^n U_i) - \mu(K) \le \mu(U) - \mu(K) < \eps .$$
\item
First we shall show that $\mu= \la$ on $\bosx$. Let $U \in \bosx$, so by part \ref{finiteness} $ \mu(U) < \infty$. 
By Definition \ref{muLC}, given $\eps > 0$, choose $K \in \bcox$ such that
$K \se U$ and $ \mu(U) -\eps < \la_2(K)$.
By Lemma \ref{LeCsInside} there exists  $ C \in \bcsx$ such that $ K \se C \se U$.
Now using Lemma \ref{Lela2LC} and Definition \ref{DeSSFLC} we have:
\begin{eqnarray*}
\mu(U) - \eps &<& \la_2(K)  \le \la_2(C) \\
&\le& \sup \{ \la_2(C) : \ \ C \se U, \ C \in \bcsx\} \\
&=& \sup \{ \la(C) : \ \ C \se U, \ C \in \bcsx\} = \la(U).
\end{eqnarray*}
Hence, $\mu(U) \le \la(U)$.
For the opposite inequality, observe that  by Lemma \ref{Lela2LC} $\la = \la_2$ on $\bcsx$,  so  
by Definition \ref{DeSSFLC}
\begin{eqnarray*}
\la(U) &=&  \sup\{ \la(C): \ C \se U,  C \in \bcsx \} \\
&=& \sup\{ \la_2 (C): \ C \se U,  C \in \bcsx \}  \\
&\le& \sup\{ \la_2 (C): \  C \se U,  C \in \bcox \} = \mu(U).
\end{eqnarray*}
Therefore, $\mu(U) = \la(U)$ for any $ U \in \bosx$. 
Now we shall show that $\mu = \la$ on $\bcsx$.  From part \ref{ineqla2} above and Lemma \ref{Lela2LC}
we have $\mu \ge \la_2 = \la$ on $\bcsx$. 
Since $\mu = \la$ on $\bosx$, for $C \in \bcsx$ we have by Definition \ref{DeSSFLC} and 
Defintion \ref{muLC}:
\begin{eqnarray*}
\la(C) &=&  \inf \{ \la(U): \ \ U \in \bosx, \ C \se U \} \\
&=& \inf \{ \mu(U): \ \ U \in \bosx, \ C \se U \} \\
&\ge& \inf \{ \mu(U): \ \ U \in \ox , \ C \se U \}  = \mu(C)
\end{eqnarray*} 
Therefore, $\mu = \la $ on $\bcsx$. 
\item
Let $U_1 , U_2 \in \ox$ be disjoint. For any $C_i, C_2 \in \bcox$ 
with $ C_i \se U_i, \ i=1,2$ we have by Lemma \ref{Lela2LC} and Definition \ref{muLC}
$$ \la_2(C_1)  + \la_2(C_2) = \la_2(C_1 \sc C_2) \le \mu(U_1 \sc U_2).$$
Then by Definition  \ref{muLC} we obtain
$$ \mu(U_1) + \mu(U_2) \le \mu(U_1 \sc U_2).$$ 
For the converse inequality, note that given $ C \se U_1 \sc U_2, \  C \in \bcox$ we 
have $C_i = C \cap U_i \in \bcox, \ i =1,2$ (since each connected component of $C$ must be 
contained either  in $U_1$ or in $U_2$) and $C = C_1 \sc C_2$.  Then
$$ \la_2(C) = \la_2(C_1) + \la_2(C_2) \le \mu(U_1)  + \mu(U_2),$$ 
giving
$$ \mu(U_1 \sc U_2)  \le  \mu(U_1)  + \mu(U_2).$$
\item
Let $C_1, C_2$ be a compact and a closed set that are disjoint.
Given $ U \in \ox, \ C_1 \sc C_2 \se U$ we may find disjoint open sets $U_1, U_2$ such that
$$ U_1 \sc U_2 \se U, \ \ \ C_i \se U_i, \ \ i=1,2.$$
Then  by parts  \ref{OpenFinAddLC} and \ref{monotoneLC}
$$ \mu(C_1) + \mu(C_2) \le \mu(U_1)  + \mu(U_2) = \mu(U_1 \sc U_2) \le \mu(U), $$
so using Definition \ref{muLC} we have
$$ \mu(C_1)  + \mu(C_2) \le \mu(C_1 \sc C_2).$$
For the converse inequality, observe that for any $U_1, U_2 \in \ox$ such that
$C_i \se U_i, \ i=1,2$ one may find disjoint open sets $V_1, V_2$ with 
$C_i \se V_i \se U_i, \ i=1,2$.
Then  by parts  \ref{OpenFinAddLC} and \ref{monotoneLC}
$$ \mu(C_1 \sc C_2)  \le \mu(V_1 \sc V_2)  = \mu(V_1) + \mu(V_2)  \le \mu(U_1)  + \mu(U_2),
$$ 
which gives  by Definition \ref{muLC} 
$$  \mu(C_1 \sc C_2)  \le \mu(C_1)  + \mu(C_2).$$
\item
Let $ V = \bsc\limits_{i \in I} V_i$ with $V, V_i \in \ox(X)$ for all $ i \in I$.
By parts \ref{OpenFinAddLC} and  \ref{monotoneLC}
for any finite $I' \subseteq I$ 
$$ \sum_{i \in I'} \mu(V_i) = \mu(\bsc_{i \in I'} V_i) \le \mu(V). $$
Then $\sum_{i \in I} \mu(V_i) \le \mu(V).$
To prove the opposite inequality, first assume that $ \mu(V) < \infty$.
For $\eps >0 $ find a compact $C \in \kox$ contained in $V$ 
such that $ \mu(V) - \eps < \la_2(C).$ By compactness, $ C \subseteq \bsc\limits_{i \in I'} V_i$ 
for some finite subset $I'$ of $I$.
Then $C = \bsc\limits_{i \in I'} C_i$ where 
$C_i = C \cap V_i \se V_i$, and $C_i \in \kox$  for each $i \in I'.$ 
By Lemma \ref{Lela2LC} and part \ref{ineqla2} we have:
\begin{eqnarray*}
\mu(V)-\eps &<& \la_2(C) = \la_2(\bsc_{i \in I'} C_i) =  \sum_{i \in I'} \la_2(C_i) \le  \sum_{i \in I'} \mu(C_i) \\
& \le &  \sum_{i \in I'} \mu(V_i) \le \sum_{i \in I} \mu(V_i) 
\end{eqnarray*}
Therefore, $\mu(V) \le \sum\limits_{i \in I} \mu(V_i)$. 
This shows that  $\mu(V) = \sum\limits_{i \in I} \mu(V_i)$ when $ \mu(V) < \infty$.

Now suppose $ \mu(V) = \infty$. For $ n \in \N$ find a compact $ K \se V$ such that $ \mu(K) > n$.
Choose a finite index set $I_n \se I$ such that $ K \se \sc_{i \in I_n} V_i$. Then
$$  \sum_{i \in I} \mu(V_i) \ge  \sum_{i \in I_n} \mu(V_i) = \mu (\bsc_{i \in I_n} V_i) \ge \mu(K) > n.$$
It follows that $ \sum_{i \in I} \mu(V_i) = \infty = \mu(V)$.
\item
It is  enough to show the statement for the case $ \mu(F) < \infty$.
If $K  \se V, \  K \in \kox$ then $ G \sc K \se F$. By parts \ref{ineqla2}, \ref{CloFinAddLC} and 
\ref{monotoneLC} $\  \mu(G) + \la_2(K) \le \mu(G) + \mu(K) \le \mu(F)$.  
Then $ \mu(G) + \mu(V) \le \mu(F)$.
\item
It is  enough to show the statement for the case $ \mu(U) < \infty$.
If $K  \se V, \  K \in \kox$ then $F= G \sc K \se U$. By  parts \ref{ineqla2}, \ref{CloFinAddLC}, 
and  Definition \ref{muLC} $\mu(G) + \la_2(K) \le \mu(G) + \mu(K) = \mu(F) \le \mu(U)$. 
Then  $ \mu(G) + \mu(V) \le \mu(U)$.
\item
Let $C \in \bccx$.  According to Lemma \ref{SolidCompoLC} and 
Definition \ref{solid hull} write $\tilde C \in \bcsx$ as  
$ \tilde C = C \sc \bsc_{i \in I} U_i$
where $U_i \in \bosx$ are the bounded components of $X \sm C$.
Given $\eps>0$ choose by Definition \ref{DeSSFLC} $V \in \bosx$ such that 
$\tilde C \se V$ and $ \la(V) -\la(\tilde C) < \eps$. 
By parts  \ref{AddBox}, \ref{superaddF}, and \ref{monotoneLC}
$$ \mu(C) + \sum_{i\in I} \mu(U_i) =  \mu(C) + \mu (\bsc_{i\in I} (U_i))  \le \mu(\tilde C) \le \mu(V).$$
Then using part \ref{extla2} and Definition \ref{la1LC} we have:
\begin{align*}
\mu(C) &\le \mu(V) - \sum_{i \in I} \mu(U_i) = \la(V) - \sum_{i\in I} \la(U_i) \\
&\le \la(\tilde C)  - \sum_{i\in I} \la(U_i) + \eps = \la_1(C) + \eps
\end{align*}
Thus, $\mu(C) \le \la_1(C)$. By part \ref{ineqla2} and Lemma \ref{Lela2LC}
$\mu(C) \ge \la_2(C) =\la_1(C)$. So $\mu =\la_1$ on $\bccx$. From part \ref{CloFinAddLC} 
and Definition \ref{la2LC} we have $\mu = \la_2$ on $\bcox$. 
\item
Using part  \ref{ineqla2}
\begin{eqnarray*}
\mu(U) &=& \sup \{\la_2(C) : C \subseteq U , \ C  \in \bcox \} \\
&\le&  \sup \{\mu(C) : C \subseteq U , \ C  \in \bcox \} \\ 
&\le&  \sup \{\mu(C) : C \subseteq U , \ C  \in \bcx \} 
\end{eqnarray*}
For the converse inequality, given $ C \se U, \  U \in \ox, \ C \in \bcx$ choose 
by Lemma \ref{LeCCoU}   $K \in \bcox$ with $ C \se K \se U$. 
Then  by parts \ref{monotoneLC} and \ref{mula1} $\mu(C) \le \mu(K) = \la_2(K)$, so 
\begin{eqnarray*}
\sup \{\mu(C) : C \subseteq U , \ C  \in \bcx \}  &\le&  \sup \{\la_2(K) : K \subseteq U , \ K  \in \bcox \}   \\
&=& \mu(U).
\end{eqnarray*}
\end{enumerate}
\end{proof}

\begin{lemma}\label{BigLemmaLC} 
For the set function $\mu$ in Definition \ref{muLC}  
$$ \mu(U) = \mu(K) + \mu(U \sm K) $$
whenever $ K \se U, \ K \in \bcx, \ U \in \ox$.
\end{lemma}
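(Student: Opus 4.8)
The plan is to prove the two inequalities separately; the bound $\mu(K)+\mu(U\sm K)\le\mu(U)$ is routine, while the reverse is the real content. For the easy direction I would note that $U\sm K$ is open (since $K$ is closed) and disjoint from $K$, with $K\sqcup(U\sm K)\se U$; as $K\in\bcx$, this is exactly the hypothesis of part \ref{superaddU} of Lemma \ref{PropMuLC}, giving $\mu(K)+\mu(U\sm K)\le\mu(U)$. Equivalently, for any $D\in\bcox$ with $D\se U\sm K$ the set $K\sqcup D$ is compact, so additivity on disjoint compact sets (part \ref{CloFinAddLC}) gives $\mu(K)+\mu(D)=\mu(K\sqcup D)\le\mu(U)$, and taking the supremum over such $D$ via part \ref{regularityLC} yields the same bound.

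For the reverse inequality $\mu(U)\le\mu(K)+\mu(U\sm K)$ I would first reduce to bounded $U$. By part \ref{regularityLC} it suffices to bound $\mu(C)$ for an arbitrary compact $C\se U$; replacing $C$ by $C\cup K$ and using Lemma \ref{easyLeLC} to trap $C\cup K$ inside a bounded open $U'$ with $\cl{U'}\se U$, monotonicity (part \ref{monotoneLC}) reduces the claim to $\mu(U')\le\mu(K)+\mu(U'\sm K)$ together with $\mu(U'\sm K)\le\mu(U\sm K)$. So assume $U$ is bounded, fix $\eps>0$, and by Definition \ref{muLC} take $C\in\bcox$ with $K\se C\se U$ and $\mu(C)=\la_2(C)>\mu(U)-\eps$ (or $>n$ when $\mu(U)=\infty$). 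The goal becomes $\mu(C)\le\mu(K)+\mu(U\sm K)+\eps$.

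The natural idea is to split $C$ into a compact piece in a small neighbourhood of $K$ and a compact piece in $U\sm K$, and then add their measures by part \ref{CloFinAddLC}. By outer regularity together with Lemma \ref{easyLeLC}, choose an open $W$ with $K\se W\se\cl W\se U$, $\cl W$ compact, and $\mu(\cl W)<\mu(K)+\eps$; then $C\sm W$ is a compact subset of $U\sm K$, so $\mu(C\sm W)\le\mu(U\sm K)$, while $C\cap\cl W\se\cl W$ contributes at most $\mu(K)+\eps$. Decomposing $C=\bsc_i C_i$ into connected components and separating those disjoint from $K$ (which lie in $U\sm K$) from those meeting $K$, each component meeting $K$ is forced — being connected and hence unable to split across two disjoint closed sets — into the near-$K$ piece.

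The main obstacle is a connected component $C_i$ that meets $K$ yet is \emph{not} contained in $\cl W$, i.e. a connected compact set straddling $K$ and $U\sm K$: because $\mu$ is genuinely not subadditive, $C_i$ cannot be broken into two disjoint compact sets whose measures add, and the cover of $C$ by $\cl W$ and $C\sm W$ (overlapping on $C\cap\partial W$) does not control $\mu(C_i)$. To handle this I would not cover but decompose at the level of solid sets: passing to the bounded open set $W\sqcup C_{\mathrm{clev}}$ of Lemma \ref{LeCleverSet} (with $D=K$, filling the bounded complementary components shared by $X\sm K$ and $X\sm W$) gives a solid-compatible enlargement of the neighbourhood of $K$ whose measure is still within $\eps$ of $\mu(K)$, using the approximation of part \ref{CoAppr} and part \ref{mula1}; then Lemma \ref{LeDecompU} writes the relevant open set as a disjoint union $K\sqcup\bsc_s V_s\sqcup W'$ of solid pieces, and the exact additivity of $\la_1$ on disjoint solid decompositions (Lemma \ref{Prla1LC}) together with additivity of $\mu$ on disjoint open sets (part \ref{AddBox}) routes the excess of each straddling component into mutually disjoint open subsets of $U\sm K$. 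Their contributions then sum to at most $\mu(U\sm K)$ by part \ref{AddBox}, while the $K$-core is charged to $\mu(K)+\eps$, so $\mu(C)\le\mu(K)+\mu(U\sm K)+\eps$; letting $\eps\downarrow0$ and combining with the easy direction finishes the proof.
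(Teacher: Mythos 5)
You have the right skeleton — the easy direction via part \ref{superaddU}, the reduction to bounded $U$ by inner regularity, and the recognition that the real enemy is a connected compact piece straddling $K$ and $U\sm K$, against which covering arguments are useless because $\mu$ is not subadditive. You have even located several of the correct tools (Lemma \ref{LeCleverSet}, part \ref{CoAppr}, Lemma \ref{LeDecompU}). But the last paragraph, where the theorem would actually be proved, contains two concrete errors. First, Lemma \ref{LeDecompU} requires the compact set to be \emph{connected} ($C\in\bccx$) and the ambient open set to be bounded open \emph{solid}; you invoke it to write an open set as $K\sqcup\bsc_s V_s\sqcup W'$ with $K$ an arbitrary compact set, which the lemma does not provide — indeed the solid hull implicit in its proof is not even defined for disconnected sets (Definition \ref{solid hull} requires $A\in\bacx$). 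Second, the claim that the enlargement $W\sqcup C_{\mathrm{clev}}$ built from Lemma \ref{LeCleverSet} ``has measure within $\eps$ of $\mu(K)$'' is false in general: take $X=\R^2$ with $\mu$ induced by Lebesgue measure, $K$ a large circle and $W$ a thin annular neighbourhood of it; then $C_{\mathrm{clev}}$ is essentially the filled-in disk, so $\mu(W\sqcup C_{\mathrm{clev}})$ is on the order of the disk's area while $\mu(K)=0$. In the paper's argument the clever set's measure is never claimed to be small; it enters the estimate twice with opposite signs and cancels exactly, and that exact cancellation is what your phrase ``routes the excess'' is silently standing in for.

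The deeper, structural gap is that your proposal assumes the very additivity it needs. The engine of the paper's proof is the exact identity $\mu(W')=\mu(W'\sm K_1)+\mu(K_1)$ for $K_1\in\bcox$ inside a bounded open solid set $W'$, applied to a compact $K_1\supseteq K\sqcup C_{\mathrm{clev}}$ produced by part \ref{CoAppr}. This identity is not a formal consequence of the lemmas you cite; it is established by a two-stage induction — first for finite disjoint unions of compact \emph{solid} sets inside a bounded open semi-solid set (using Lemma \ref{LeDecompV} and Lemma \ref{Prla1LC}), then for finite disjoint unions of compact \emph{connected} sets inside a bounded open solid set (using solid hulls, the partial order by inclusion of hulls, and Lemma \ref{LeDecompU}, which is legitimate there precisely because each piece is connected). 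Only after this does the clever-set construction handle an arbitrary compact $K$ in a connected bounded open set, followed by componentwise and regularity arguments for general $U$. Without proving that intermediate additivity, your final step — adding the measure of the near-$K$ compact to the measures of disjoint open subsets of $U\sm K$ — is exactly the non-subadditivity problem you correctly flagged, reappearing one level up.
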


\begin{proof} 
We shall prove the statement in steps.
Recall that $\mu = \la_1$ on $\bccx$ and $ \mu = \la_2$ on $\bcox$
by part \ref{mula1} of Lemma \ref{PropMuLC}.  \\
STEP 1. We shall show that
$\mu(U) = \mu(C) + \mu(U \setminus C) $ whenever
$C \subseteq U,  \ U \in  \bossx, \ C = C_1 \sc \ldots \sc C_n, \ C_j \in \bcsx.$ \\
Let $C = C_1 \sc C_2 \sc \ldots \sc C_n$, where each $C_j \in \bcsx$. 
The proof is by induction on $n$. Suppose $n=1$, i.e. $C \in \bcsx$.
By Lemma \ref{LeDecompV} 
\begin{eqnarray}
U = C \sc \bsc_{i=1}^n U_i 
\end{eqnarray}
where  each $U_i \in \bossx$.
By Lemma \ref{Prla1LC}
$$ \mu(U) = \mu(C) + \sum_{i=1}^n \mu (U_i). $$
Then 
$$ \mu(U) - \mu(C) =   \sum_{i=1} ^n \mu (U_i) = \mu( U \sm C), $$
where the last equality follows from additivity of $\mu$ on $\ox $ in
Lemma \ref{PropMuLC}.
Suppose that result holds for all $ C \se U,  \ U \in \bossx$ where
$C$ is the disjoint union of less than $n$ sets $C_j \in \bcsx$. 
Now let $C = C_1 \sc C_2 \sc \ldots \sc C_n$, where each $C_j \in \bcsx$. 
By Lemma \ref{LeDecompV}
\begin{eqnarray} \label{UCS1}
U \sm C_1 = \bsc_{i=1}^m U_i 
\end{eqnarray}
where  each $U_i \in \bossx$.
By connectivity each $C_j, \ j=2, \ldots, n$ is contained in one of the sets $U_i$.
For  $ i =1, \ldots, m$ let 
 $K_i$ be the disjoint union of those $C_j, \ j \in \{2, \ldots, n\} $ 
that are contained in $U_i$. Notice that each $K_i$ is the union of no more than
$n-1$ disjoint sets, and 
$\bsc_{i=1}^m K_i = \bsc_{j=2}^n C_j. $
By induction hypothesis, 
\begin{align} \label{inductS1}
\mu(U_i) = \mu(U_i \sm K_i) + \mu(K_i).
\end{align} 
By finite additivity of $\mu$  on compact sets 
in Lemma \ref{PropMuLC}
\begin{align} \label{unionS1}
\mu(C) = \mu(\bsc_{j=2}^n C_j) + \mu(C_1) 
= \mu(\bsc_{i=1}^m K_i) + \mu(C_1)  = \sum_{i=1}^m \mu(K_i) + \mu(C_1). 
\end{align}
Also we have
\begin{align} \label{UKS1}
U \sm C =  (U \sm C_1) \sm \bsc_{j=2}^n C_j = (\bsc_{i=1}^m U_i) \sm (\bsc_{i=1}^m K_i )
= \bsc_{i=1}^m (U_i \sm K_i ). 
\end{align}
By the first part of the induction proof
$$ \mu(U) = \mu(U \sm C_1) + \mu (C_1). $$
Using
(\ref{UCS1}), additivity of $\mu$ on $\bo(X)$ in
Lemma \ref{PropMuLC}, (\ref{inductS1}),  (\ref{unionS1}), and  (\ref{UKS1}) 
we obtain:
\begin{eqnarray*}
\mu(U) &=& \mu(U \sm C_1) + \mu(C_1) \\
&=& \mu(\bsc_{i=1}^m U_i) + \mu(C_1) \\
&=& \sum_{i=1}^m  \mu(U_i) + \mu(C_1) \\
&=& \sum_{i=1}^m  \mu(U_i \sm K_i) +\sum_{i=1}^m \mu(K_i)  + \mu(C_1) \\
&=& \sum_{i=1}^m  \mu(U_i \sm K_i) + \mu(C)  \\
&=& \mu(U\sm C) + \mu(C)
\end{eqnarray*}
STEP 2. We shall show that
$\mu(U) = \mu(C) + \mu(U \setminus C) $ whenever
$C \subseteq U, \ C \in \bcox, \ U \in  \bosx.$ \\
Let $C = C_1 \sc C_2 \sc \ldots \sc C_n$, where each $C_i \in \bccx$. 
The proof is by induction on $n$. Suppose $n=1$, i.e. $C \in \bccx$.
By Lemma \ref{LeDecompU} 
$$ U = C \sc W \sc \bsc_{s \in S} V_s $$
where $V_s \in \bosx, \ W \in \bossx$ ($W$ may be empty).
By Lemma \ref{Prla1LC}
$$ \mu(U) = \mu(C) + \sum_{s \in S} \mu (V_s) + \mu(W). $$
Then 
\[  \mu(U) - \mu(C) =   \sum_{s \in S} \mu (V_s) + \mu(W) = \mu( U \sm C), \]
where the last equality follows from additivity of $\mu$ on $\ox$ in
Lemma \ref{PropMuLC}.
Suppose that  the result holds for all $ C \se U,  \ U \in \bosx, \ C \in \bcox$ where
$C$ is the disjoint union of less than $n$ sets $C_i \in \bccx$. 
Now assume that $C = C_1 \sc \ldots \sc C_n, \ C_i \in \bccx$.
As in the proof of Lemma \ref{Prla1LC}, consider partial order on
$\{ C_1, \ldots, C_n \}$ where $C_i \le C_j$ iff $ \tilde C_i \se \tilde C_j$. 
Some parts of the argument here are as in the proof 
of  Lemma \ref{Prla1LC}. 
Let $C_1, \ldots, C_p, \ p \le n$ be maximal elements  in $\{ C_1, \ldots, C_n \}$ 
with respect to this partial order. Then $\tilde C_1, \ldots, \tilde C_p$ are disjoint. 
This implies that the family 
\[ \{W_s: \ s \in S\} = \bsc_{k=1}^p \{ \mbox{ bounded components of } 
X \sm C_k\} \]
is a disjoint family of sets.  Each $ W_s \in \bosx$ 
by Lemma \ref{SolidCompoLC} and 
$\bsc_{s \in S} W_s \in \bo(X)$, because $\bsc_{s \in S} W_s \se U$.
Let $I = \{1, \ldots, n\} \sm \{1, \ldots, p \}$.
For each $ i \in I$ $C_i$ is non-maximal element, and there exists $k \in \{ 1, \ldots, p\}$
such that  $ C_i \se \tilde C_k$. In other words,
each non-maximal set $C_i, \ i \in I$ is contained in a bounded component of 
$X \sm C_k$ for some maximal element $C_k$ (for some $ k \in \{1, \ldots, p\}$), 
that is $C_i \se W_s$ for some $s \in S$.
Let $S_1$ be a finite subset of $S$ such that for $s \in S_1$ 
the set $W_s$ contains some $C_i,  \ i \in I$. Let $S' = S \sm S_1$.
For each $s \in S_1$ let $C_s$ be the disjoint union of those sets $C_i,  i \in I$
that are contained in $W_s$. Since $|I| \le n-1$,  each $C_s$ is the union of no more 
than $n-1$ disjoint sets, and by induction hypothesis for each $ s \in S_1$
\begin{align} \label{2sh}
\mu(W_s ) = \mu( W_s \sm C_s) + \mu(C_s).
\end{align}
Note also that 
\begin{align} \label{1sh}
\bsc_{s \in S_1} C_s = \bsc_{i \in I} C_i. 
\end{align}
Then using Definition \ref{solid hull} and (\ref{1sh}) we see that:
\begin{eqnarray*}
\tilde C_1 \sc \ldots \sc \tilde C_p &=& C_1 \sc \ldots \sc C_p  \sc \bsc_{s \in S} W_s \\
&=& C_1 \sc \ldots \sc C_p \sc \bsc_{s \in S_1} W_s   \sc \bsc_{s \in S'} W_s \\
&=&  C_1 \sc \ldots \sc C_p \sc  \bsc_{s \in S_1} C_s  \sc \bsc_{s \in S_1} (W_s \sm  C_s) 
\sc \bsc_{s \in S'} W_s \\
&=& C_1 \sc \ldots \sc C_p \sc  \bsc_{i \in I} C_i  \sc \bsc_{s \in S_1} (W_s \sm  C_s) 
\sc \bsc_{s \in S'} W_s \\
&=& C_1 \sc \ldots \sc C_n \sc \bsc_{s \in S_1} (W_s \sm  C_s) 
\sc \bsc_{s \in S'} W_s 
\end{eqnarray*}
We write
\begin{eqnarray} \label{CtildeUnion}
\tilde C_1 \sc \ldots \sc \tilde C_p  =  C_1 \sc \ldots \sc C_n \sc W = C \sc W 
\end{eqnarray}
where
$$W = \bsc_{s \in S_1} (W_s \sm  C_s) \sc \bsc_{s \in S'} W_s$$ 
is an open bounded set (since $W \se \bsc_{s \in S} W_s \se U$).
Using Definition \ref{solid hull} and Definition \ref{la1LC}, 
(\ref{2sh}), (\ref{1sh}), finite additivity of $\mu$ on $\bcx$  
and additivity  of $\mu$ on $\ox$ in Lemma \ref{PropMuLC} we have:
\begin{eqnarray*}
\mu(\bsc_{k=1}^p \tilde C_k) &=& \sum_{k=1}^p \mu(C_k) + \sum_{s \in S} \mu(W_s) \\
&=&  \sum_{k=1}^p \mu(C_k) + \sum_{s \in S_1} \mu(W_s) + \sum_{s \in S'} \mu(W_s) \\
&=&  \sum_{k=1}^p \mu(C_k) + \sum_{s \in S_1}  \mu(C_s) + 
\sum_{s \in S_1} \mu(W_s \sm  C_s) + \sum_{s \in S'} \mu(W_s) \\
&=& \sum_{k=1}^p \mu(C_k) + \sum_{i \in I}  \mu(C_i) + 
\sum_{s \in S_1} \mu(W_s \sm  C_s) + \sum_{s \in S'} \mu(W_s) \\
&=& \mu(C_1 \sc \ldots \sc C_n) + \mu(W) =\mu(C) + \mu(W).
\end{eqnarray*} 
The sets $ U \sm (C \sc W) = U \sm (\tilde C_1 \sc \ldots \sc \tilde C_p)$ and $W$ are 
disjoint open bounded sets whose union is $U \sm C$. 
Now using the result of Step 1, (\ref{CtildeUnion}), just obtained equality 
$\mu(\bsc_{k=1}^p \tilde C_k) = \mu(W) + \mu(C)$,
and  additivity of $\mu$ on $\ox$ in Lemma \ref{PropMuLC} we have:
\begin{eqnarray*}
\mu(U) &=& \mu(U \sm \bsc_{k=1}^p \tilde C_k)  + \mu(\bsc_{k=1}^p \tilde C_k) \\
&=& \mu(U \sm (C \sc W )) + \mu(W) + \mu(C) \\
&=& \mu(U \sm C) + \mu(C)
\end{eqnarray*}
STEP 3. We shall show that
$\mu(U) = \mu(K) + \mu(U \setminus K) $ whenever $K \subseteq U, \ K \in \bcx, \ U \in  \bocx.$ \\
Using part \ref{regularityLC} of Lemma \ref{PropMuLC} and  Lemma \ref{LeConLC}  
choose sets $ W \in \bocx$ and $ D \in \bccx$ such that 
\begin{eqnarray} \label{WD}
 K \se W \se D \se U \mbox{   and   } \mu(U) - \mu(W) < \eps. 
\end{eqnarray}
Let $B$ be the union 
of bounded components of $ X \sm U$ and let the open set $V$ 
be the union of bounded components of $ X \sm D$. Set 
$$C = B \cap V.$$
By Lemma \ref{LeCleverSet}
$C$ is compact and $ U \sqcup C$ is open.
The solid hull $\tilde D= D  \sqcup V$. 
Then by part \ref{superaddF} of Lemma \ref{PropMuLC}
$\mu(D) + \mu(V) \le \mu(\tilde D)$. 
Note that by Lemma \ref{PrSolidHuLC}
$V \se \tilde D  \se \tilde U  = U \sc B$. 
Then
$$ V \se  U \sc (B \cap V) = U \sc C. $$
It follows that
$$ K \sc C \se D \sc V = \tilde D \se U \sc C .$$
Since $U \sc C$ is open, by Lemma \ref{opensolid}  we may find $ W' \in \bosx$ such that 
\begin{eqnarray} \label{sha} 
K \sc C \se \tilde D \se W' \se U \sc C.
\end{eqnarray}
Then 
\begin{eqnarray} \label{W'} 
W' \sm(K \sqcup C) \se U \sm K.
\end{eqnarray}
According to part \ref{CoAppr} of Lemma \ref{PropMuLC}, pick $K_1 \in \bcox$ such that 
\begin{eqnarray} \label{K1}
K \sqcup C \se K_1 \se W'  \mbox{   and    } \mu(K_1) \le \mu(K \sqcup C) + \eps.
\end{eqnarray}
By Step 2, $\mu(W') = \mu(W' \sm K_1) + \mu(K_1)$.
Now using (\ref{WD}), Definition \ref{la1LC}, 
(\ref{sha}),  (\ref{K1}), (\ref{W'}), additivity on $\ox$ and finite additivity of 
$\mu$ on $\bcx$ in Lemma \ref{PropMuLC}  we have:  
\begin{eqnarray*}
\mu(U) - \eps  &<& \mu(W) \le \mu(D)= \mu(\tilde D) - \mu(V) \\
&\le& \mu(\tilde D) - \mu(C)  \\
&\le&  \mu(W') - \mu(C)  \\
&=& \mu(W' \sm K_1) + \mu(K_1) - \mu(C) \\
&\le& \mu(W'  \sm (K \sqcup C)) + \mu(K \sqcup C) + \eps - \mu(C) \\
&\le& \mu(U \sm K)  + \mu(K) + \mu(C) -\mu(C) + \eps\\
&=& \mu(U \sm K)  + \mu(K)  + \eps
\end{eqnarray*}
It follows that $\mu(U)  \le \mu(U \sm K)  + \mu(K)$. 
The opposite inequality is part \ref{superaddU} of Lemma \ref{PropMuLC}. \\
STEP 4.  We shall show that
$\mu(U) = \mu(K) + \mu(U \setminus K) $ whenever $K \subseteq U, \ K \in \bcx, \ U \in  \bo(X).$ \\
Let $ U = \bsc_{i \in I} U_i$ be the decomposition of $U$ into connected components, and 
let $I'$ be a finite subset of $I$ such that $ K \se \bsc_{i \in I'} U_i$. For each $i \in I'$ the set 
$K_i = K \cap U_i = K \sm \bsc_{j \in I' \sm {i}} U_j \in \bcx$ and 
\begin{eqnarray} \label{Bb}
K = \bsc_{i \in I'} K_i. 
\end{eqnarray}
By Step 3  we know that 
\begin{eqnarray} \label{A} 
\mu(K_i)  + \mu(U_i \sm K_i) = \mu(U_i) \ \ \ \ \   \mbox{for  each} \ \  i \in I'.
\end{eqnarray}
Then  using (\ref{Bb}),  finite additivity of $\mu$ on $\bcx$ and additivity of $\mu$ on 
$\ox$ in Lemma \ref{PropMuLC}, and (\ref{A})  we have:
\begin{eqnarray*}
\mu(K) &+& \mu(U \sm K) = \mu(\bsc_{i \in I'}  K_i)  + \mu (U \sm \bsc_{i \in I'}  K_i) \\
&=& \sum_{i \in I'} \mu(K_i) + \sum_{i \in I'} \mu(U_i \sm K_i) + \sum_{i \in I \sm I'} \mu(U_i) \\
&=& \sum_{i \in I'} \mu(U_i) + \sum_{i \in I \sm I'} \mu(U_i) = \sum_{i \in I} \mu(U_i) = \mu(U)
\end{eqnarray*} 
STEP 5. We shall show that
$\mu(U) = \mu(K) + \mu(U \setminus K) $ whenever $K \subseteq U, \ K  \in \bcx, \ U \in  \ox.$  \\
First assume that $ \mu(U) < \infty$.
Given $\eps >0$ by Definition \ref{muLC} find $C \in \bcx $ such that $ K \se C$ and $\mu(U) - \mu(C) < \eps $.
Using  Lemma \ref{easyLeLC} find 
$ V \in \bo(X)$  such that 
$$ K \se C \se V \se U.$$ 
By Step 4 $\ \ \mu(V) = \mu(V \sm K)  + \mu(K)$.
Then using monotonicity of $\mu$ in Lemma \ref{PropMuLC} we see that
\begin{align} \label{muCVK}
 \mu(C) \le \mu(V) =  \mu(V \sm K)  + \mu(K) \le \mu(U \sm K) + \mu(K).
\end{align}
Then $ \mu(U) - \eps < \mu(C) \le \mu(U \sm K) + \mu(K)$.
Therefore, $ \mu(U)  \le \mu(U \sm K) + \mu(K)$.
The opposite inequality is part  \ref{superaddU} of Lemma \ref{PropMuLC}. 
Therefore, if $ \mu(U) < \infty$ then
$$ \mu(U)  = \mu(U \sm K) + \mu(K).$$
Now assume $ \mu(U) = \infty$.
For $n \in \N$ choose $ C \in \kx$ such that $ K \se C$ and $ \mu(C) > n$.
By Lemma \ref{easyLeLC} find $ V \in \bo(X)$  such that 
$$ K \se C \se V \se U.$$ 
Using again (\ref{muCVK}) we have: 
$$ n < \mu(C) \le \mu(V \sm K) + \mu(K),$$
i.e. $n - \mu(K) \le \mu(V \sm K) \le \mu(U \sm K)$.
Since $ \mu(K) \in \R$ by part \ref{finiteness} of Lemma \ref{PropMuLC}, 
it follows that $ \mu (U \sm K) = \infty$, and $ \mu(U \sm K) + \mu(K) = \mu(U)$.
\end{proof}

\begin{theorem} \label{extThLC}
Let $X$ be locally compact, connected, locally connected.
A solid set function on $X$ extends uniquely to a compact-finite topological measure on $X$.
\end{theorem}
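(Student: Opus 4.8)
The plan is to verify that the set function $\mu$ constructed in Definition \ref{muLC} satisfies the three axioms of Definition \ref{DeTMLC}, is compact-finite, and restricts to $\la$ on $\basx$; all of this is essentially assembled from the preceding lemmas. Outer regularity \ref{TM3} is built into the definition of $\mu$ on closed sets, and inner regularity \ref{TM2} is precisely part \ref{regularityLC} of Lemma \ref{PropMuLC}. The one axiom demanding real argument is finite additivity \ref{TM1}, and here I would appeal to Proposition \ref{PrFinAddLC}: since $\mu$ already satisfies \ref{TM2} and \ref{TM3}, it suffices to check hypotheses \ref{usl1} and \ref{usl2}. The first is additivity of $\mu$ on disjoint open sets, supplied by part \ref{OpenFinAddLC} of Lemma \ref{PropMuLC}; the second is exactly Lemma \ref{BigLemmaLC}. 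Compact-finiteness is part \ref{finiteness}, and part \ref{extla2} guarantees $\mu = \la$ on $\basx$, so $\mu$ is a compact-finite topological measure extending $\la$.

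For uniqueness, let $\nu$ be any compact-finite topological measure with $\nu = \la$ on $\basx$; I would show $\nu = \mu$ by propagating this agreement up the same chain of set-classes used to build $\mu$. The first rung is $\bccx$. For $C \in \bccx$ the solid hull decomposes as $\tilde C = C \sqcup \bsc_{i \in I} U_i$, where each $U_i \in \bosx$ is a bounded component of $X \sm C$ (Lemma \ref{SolidCompoLC} and Definition \ref{solid hull}). Because $\tilde C$ is compact while $\bsc_{i \in I} U_i$ is open, axiom \ref{TM1} applies to this mixed disjoint union, and combined with additivity of $\nu$ on open sets (part \ref{smooth} of Lemma \ref{propTMLC}) it yields $\nu(C) = \nu(\tilde C) - \sum_{i \in I} \nu(U_i) = \la(\tilde C) - \sum_{i \in I} \la(U_i) = \la_1(C)$, which equals $\mu(C)$ by part \ref{mula1} of Lemma \ref{PropMuLC}. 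Finite additivity of $\nu$ on compact sets (again \ref{TM1}) then gives $\nu = \la_2 = \mu$ on $\bcox$.

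Climbing further, for $U \in \ox$ inner regularity \ref{TM2} expresses $\nu(U)$ as the supremum of $\nu(C)$ over compact $C \se U$; by Lemma \ref{LeCCoU} every such $C$ lies in some $K \in \bcox$ with $K \se U$, so monotonicity of $\nu$ lets me replace this by the supremum over $\bcox$, giving $\nu(U) = \sup\{\la_2(K): K \in \bcox, \ K \se U\} = \mu(U)$ through Definition \ref{muLC}. Finally, for closed $F$ outer regularity \ref{TM3} gives $\nu(F) = \inf\{\nu(U): F \se U, \ U \in \ox\} = \inf\{\mu(U): F \se U\} = \mu(F)$. Thus $\nu$ and $\mu$ agree on all of $\ox \cup \cx$, which is the uniqueness assertion.

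The genuine heart of the existence half is the additivity identity of Lemma \ref{BigLemmaLC}, which is what makes condition \ref{usl2} of Proposition \ref{PrFinAddLC} available; granting that lemma, the remainder of existence is bookkeeping. On the uniqueness side the delicate point is the $\bccx$ step, where one must exploit that axiom \ref{TM1} is formulated for disjoint unions drawn from $\kx \cup \ox$, so that it can be applied to the hybrid decomposition $\tilde C = C \sqcup \bsc_{i \in I} U_i$ of a compact set into a compact piece together with open solid pieces, rather than only to unions lying wholly within the compact or wholly within the open sets.
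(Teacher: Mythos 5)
Your proposal is correct and follows essentially the same route as the paper: existence via Proposition \ref{PrFinAddLC} together with part \ref{OpenFinAddLC} of Lemma \ref{PropMuLC} and Lemma \ref{BigLemmaLC}, regularity and compact-finiteness from Definition \ref{muLC} and parts \ref{regularityLC}, \ref{finiteness}, \ref{extla2}, and uniqueness by propagating the agreement $\nu = \la$ from $\basx$ through $\bccx$ (via the solid-hull decomposition and \ref{TM1}), then to $\bcox$, then to $\ox$ and $\cx$ by regularity. The only cosmetic difference is that you re-derive inline, from Lemma \ref{LeCCoU} and monotonicity, what the paper cites as part \ref{CoRegulLC} of Lemma \ref{propTMLC}.
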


\begin{proof}
Definitions \ref{la1LC}, \ref{la2LC} and \ref{muLC} extend solid set function 
$\la$ to a set function $\mu$. We would like to show that $\mu$ is a topological measure.
Definition \ref{muLC} and part \ref{regularityLC} of  Lemma \ref{PropMuLC} 
show that $\mu$ satisfies \ref{TM2}  and \ref{TM3} of definition \ref{DeTMLC}.
Proposition \ref{PrFinAddLC}, part  \ref{OpenFinAddLC} of Lemma \ref{PropMuLC}, and Lemma \ref{BigLemmaLC} 
show that $\mu$ is a topological measure.  By part \ref{finiteness} of Lemma  \ref{PropMuLC} $\mu$ is compact-finite. 

To show that the extension from a solid set function to a topological measure 
is unique suppose $\nu$ is a topological measure on $X$ such that $ \mu = \nu = \la$ on $ \basx$.
If $A \in \bccx$ then by Definition \ref{solid hull} $A = \tilde A \sm (\bsc_{s \in S} B_s)$, where $ \tilde A, B_s \in \basx$, 
so from Definition \ref{la1LC} it follows that $\mu = \nu$ on  $ \bccx$, and, hence, on $\bcox$. 
From part \ref{CoRegulLC} of Lemma \ref{propTMLC} it then follows that $\mu = \nu$ on $\ox$, so $ \mu = \nu$.
\end{proof} 

\begin{remark} \label{extsumme}
We will summarize the extension procedure for obtaining a topological measure $\mu$ from a
solid set function $\la$ on a locally compact, connected, locally connected space.
First, for a compact connected set $C$ we have:
$$ \mu(C) = \la(\tilde C) - \sum_{i=1}^n \la(B_i), $$
where $\tilde C$ is the solid hull of $C$ and $B_i$ (open solid sets) are bounded components of $X \sm C$.  
For $C \in  \bcox$, i.e. for a compact set which is the 
union of finitely many disjoint compact connected sets  $C_1, \ldots, C_n$, 
we have:
$$ \mu (C) = \sum_{i=1}^n  \mu(C_i). $$
For an open set $U$ we have: 
$$ \mu(U)  = \sup\{ \mu(K) : \ K \subseteq U , \ K \in \bcox \}, $$ 
and for a closed set $F$  let 
$$ \mu(F) = \inf \{ \mu(U): \  F \subseteq U, \ U \in \ox \}.$$ 
\end{remark}

\begin{theorem} \label{Tpart2}
If a solid set function $\la$ is extended to a topological measure $\mu$ 
then the following holds: if $\la: \basx \rightarrow \{0,1\}$ then $\mu$ also assumes only 
values $0$ and $1$; if 
$\sup \{ \la(K): \ K \in \bcsx\} = M < \infty$ then $\mu$ is finite and $ \mu(X) = M.$
\end{theorem}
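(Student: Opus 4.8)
The plan is to handle the two assertions separately, disposing of the finiteness claim quickly and then spending the effort on propagating the $\{0,1\}$-valued property through the successive layers of the extension recorded in Remark \ref{extsumme}. For the second assertion, recall that $\mu = \la$ on $\basx$ by part \ref{extla2} of Lemma \ref{PropMuLC}, so in particular $\mu = \la$ on $\bcsx$. Since $\mu$ is a topological measure on the locally compact, connected, locally connected space $X$ (Theorem \ref{extThLC}), part \ref{l8} of Lemma \ref{propTMLC} applies and gives
\[ \mu(X) = \sup\{\mu(K) : K \in \bcsx\} = \sup\{\la(K) : K \in \bcsx\} = M . \]
Finiteness is then immediate from monotonicity (part \ref{monotoneLC} of Lemma \ref{PropMuLC}): every $A \in \ox \cup \cx$ satisfies $\mu(A) \le \mu(X) = M < \infty$.

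For the first assertion I would climb the tower $\basx \to \bccx \to \bcox \to \ox \to \cx$. On $\basx$ there is nothing to do, as $\mu = \la$ takes values in $\{0,1\}$ there. For $C \in \bccx$, Definition \ref{la1LC} gives $\mu(C) = \la(\tilde C) - \sum_{i \in I} \la(B_i)$, where the $B_i$ are the bounded components of $X \sm C$ (each in $\basx$ by Lemma \ref{SolidCompoLC}). Since $\bsc_{i \in I} B_i \se \tilde C$, Lemma \ref{PrPropSsfLC} yields $\sum_{i \in I} \la(B_i) \le \la(\tilde C) \le 1$; in particular the sum is a finite nonnegative integer, and $\mu(C)$ is a nonnegative integer bounded by $\la(\tilde C) \le 1$. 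Hence $\mu(C) \in \{0,1\}$.

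The crucial step is the passage to $\bcox$, where, since $\mu(\bsc_{i=1}^n C_i) = \sum_i \mu(C_i)$ by Definition \ref{la2LC}, the total could a priori exceed $1$. To rule this out I would use that every compact set sits inside a bounded compact connected set: applying Lemma \ref{LeConLC} with $U = X$ (open and connected) to the compact set $\bsc_{i=1}^n C_i$ produces some $C \in \bccx$ containing all the $C_i$. Because $\mu = \la_1$ on $\bccx$ (part \ref{mula1} of Lemma \ref{PropMuLC}) and $\bsc_i C_i \se C$ with all sets in $\bccx$, part (ii) of Lemma \ref{Prla1LC} gives
\[ \sum_{i=1}^n \mu(C_i) = \sum_{i=1}^n \la_1(C_i) \le \la_1(C) = \mu(C) \le 1 . \]
Thus $\mu$ on $\bcox$ is again a nonnegative integer at most $1$, i.e. lies in $\{0,1\}$. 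Finally, for open $U$ Definition \ref{muLC} writes $\mu(U)$ as a supremum of values $\mu(K)$ with $K \in \bcox$, each already in $\{0,1\}$, so $\mu(U) \in \{0,1\}$; and for closed $F$, $\mu(F)$ is an infimum of such open values, again in $\{0,1\}$. This exhausts $\ox \cup \cx$.

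The main obstacle is precisely the $\bcox$ step: the definition of $\mu$ on finite disjoint unions is additive, so without an external bound there is no reason for the sum over components to stay below $1$. The ``common container'' device (Lemma \ref{LeConLC} applied with $U = X$) together with the superadditivity of $\la_1$ recorded in Lemma \ref{Prla1LC} is exactly what converts finite additivity across several components into the required uniform bound; once that is in place, the open and closed cases are routine bookkeeping with suprema and infima.
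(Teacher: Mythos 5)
Your proof is correct and takes essentially the same route as the paper's, whose proof simply cites Remark \ref{extsumme}, part \ref{extla2} of Lemma \ref{PropMuLC}, and part \ref{l8} of Lemma \ref{propTMLC}: one propagates the values of $\la$ layer by layer through the extension summarized in Remark \ref{extsumme}, and part \ref{l8} yields $\mu(X)=M$. The only place you diverge is in a detail the paper leaves implicit, namely bounding $\sum_{i}\mu(C_i)\le 1$ on $\bcox$, which you achieve via a common connected container (Lemma \ref{LeConLC} with $U=X$) and the superadditivity in Lemma \ref{Prla1LC}, whereas the paper's citations suggest using monotonicity together with $\mu(X)=\sup\{\la(K):K\in\bcsx\}\le 1$; both arguments are sound.
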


\begin{proof}
Follows from Remark \ref{extsumme}, part \ref{extla2} of Lemma \ref{PropMuLC}, and part \ref{l8}  of Lemma \ref{propTMLC}. 
\end{proof}

\begin{theorem} \label{ExtUniq}
The restriction $\la$ of a compact-finite topological measure $\mu$ to $\basx$ is a solid set function, and
$\mu$ is uniquely determined by $\la$. 
\end{theorem}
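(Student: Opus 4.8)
The plan is to prove the two assertions separately. Write $\la = \mu|_{\basx}$. Since every set in $\basx$ is bounded and $\mu$ is compact-finite, part \ref{l2} of Lemma \ref{propTMLC} gives $\la(A) < \infty$ for every $A \in \basx$, so $\la$ is a non-negative real-valued function on $\basx$, and it remains only to verify the four conditions of Definition \ref{DeSSFLC}.

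Conditions \ref{superadd}, \ref{regul}, \ref{regulo} should fall out directly from the topological-measure axioms together with the regularity lemmas already proved. For \ref{superadd}: given disjoint $C_1, \ldots, C_n \in \bcsx$ with $\bsc_{i=1}^n C_i \subseteq C \in \bcsx$, the union $\bsc_{i=1}^n C_i$ is compact, so iterating \ref{TM1} of Definition \ref{DeTMLC} gives $\mu(\bsc_{i=1}^n C_i) = \sum_{i=1}^n \mu(C_i)$, and monotonicity (part \ref{l1}) yields $\sum_{i=1}^n \la(C_i) \le \la(C)$. For the inner regularity \ref{regul}: if $U \in \bosx$, then \ref{TM2} gives $\mu(U) = \sup\{\mu(K): K \in \bcx,\ K \subseteq U\}$, and for each such $K$, Lemma \ref{LeCsInside} supplies $C \in \bcsx$ with $K \subseteq C \subseteq U$, whence $\mu(K) \le \mu(C) \le \mu(U)$; thus the supremum over compact sets equals that over compact solid sets. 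Dually, for \ref{regulo}: if $K \in \bcsx$, then \ref{TM3} gives $\mu(K) = \inf\{\mu(U): U \in \ox,\ K \subseteq U\}$, and for each such $U$, Lemma \ref{opensolid} provides $W \in \bosx$ with $K \subseteq W \subseteq \cl W \subseteq U$, so $\mu(K) \le \mu(W) \le \mu(U)$, and the infimum over open sets equals that over bounded open solid sets.

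Condition \ref{solidparti} is where a little care is needed, because a solid partition may mix open and compact constituents. Suppose $A = \bsc_{i=1}^n A_i$ with $A, A_i \in \basx$. I would group the pieces by type: let $K$ be the union of those $A_i$ that are compact and $V$ the union of those that are open, so that $K \in \kx$, $V \in \ox$, and $A = K \sc V$. Whether $A$ is compact or open, each of $K$, $V$, and $K \sc V = A$ lies in $\kx \cup \ox$, so \ref{TM1} applies and gives $\mu(A) = \mu(K) + \mu(V)$. Finite additivity on compact sets (part \ref{CloFinAddLC}) and additivity on open sets (part \ref{smooth}) then split $\mu(K)$ and $\mu(V)$ into $\sum \la(A_i)$ over the compact and open indices respectively, yielding $\la(A) = \sum_{i=1}^n \la(A_i)$. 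This completes the verification that $\la$ is a solid set function.

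For the uniqueness assertion I would invoke the uniqueness half of Theorem \ref{extThLC}. Since $\la$ is a solid set function, the construction of Definitions \ref{la1LC}, \ref{la2LC}, \ref{muLC} produces a compact-finite topological measure extending $\la$, and the argument in the proof of Theorem \ref{extThLC} shows that any topological measure agreeing with $\la$ on $\basx$ must coincide with this extension: its values on $\bccx$ are pinned down by the solid-hull formula of Definition \ref{la1LC} (via \ref{TM1} applied to $\tilde C = C \sc \bsc_s B_s$ and additivity on open sets), hence on $\bcox$ by finite additivity on compacts, hence on $\ox$ by part \ref{CoRegulLC} of Lemma \ref{propTMLC}, and finally on $\cx$ by \ref{TM3}. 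Applying this to $\mu$ itself, which is a compact-finite topological measure restricting to $\la$, shows that $\mu$ equals the canonical extension, so $\mu$ is the unique compact-finite topological measure with restriction $\la$. I expect the only genuine obstacle to be \ref{solidparti}: one must notice that the partition can contain both open and closed solid constituents and handle this by consolidating the two types and appealing to the mixed-additivity axiom \ref{TM1}, rather than adding the pieces one at a time; everything else is a direct translation of the axioms and a citation of Theorem \ref{extThLC}.
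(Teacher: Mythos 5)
Your proposal is correct and follows essentially the same route as the paper: real-valuedness from compact-finiteness, conditions \ref{superadd} and \ref{solidparti} from \ref{TM1} plus monotonicity, \ref{regul} from \ref{TM2} with Lemma \ref{LeCsInside}, \ref{regulo} from \ref{TM3} with Lemma \ref{opensolid}, and uniqueness by appeal to Theorem \ref{extThLC}. Your explicit grouping of the partition into a compact part $K$ and an open part $V$ before applying \ref{TM1} is a useful elaboration of the paper's terse claim; just note that for finite additivity of $\mu$ on compact sets you should iterate \ref{TM1} directly rather than cite part \ref{CloFinAddLC} of Lemma \ref{PropMuLC}, since that lemma concerns the extension constructed from a solid set function, not an arbitrary topological measure.
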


\begin{proof}
Let $\la$ be the restriction of $\mu$ to $ \basx$. 
Monotonicity of a topological measure (see  Lemma \ref{propTMLC}) and \ref{TM1} of Definition  \ref{DeTMLC}
show that $\la$ satisfies conditions \ref{superadd} and \ref{solidparti}
of Definition  \ref{DeSSFLC}. 
For $ U \in \bosx$ by \ref{TM2} let $ K \in \kx$ be such that $\mu(U) - \mu(K) < \eps$ and by Lemma \ref{LeCsInside}
we may assume that $K \in \bcsx$. 
Part \ref{regul} of Definition  \ref{DeSSFLC} follows.  
Part  \ref{regulo} of Definition  \ref{DeSSFLC} follows from \ref{TM3} and Lemma \ref{opensolid}. 
Since $\mu$ is compact-finite, $\la$ is real-valued. Therefore, $\la$ is a solid set function.
\end{proof}

\begin{remark} \label{additionalPropMu}
Lemma  \ref{PrPropSsfLC}, Lemma \ref{Prla1LC},  and Lemma \ref{PropMuLC} give us 
some additional properties of topological measures. For 
example, by part \ref{CloFinAddLC} of Lemma \ref{PropMuLC}, if a closed set $F$ and  a compact $K$ are disjoint,
then $\mu(F \sc K) = \mu(F)  + \mu(K)$.
\end{remark} 

\section{Examples} \label{examplesTmLC}

When $X$ is compact, a set is called solid if it and its complement are both connected.
For a compact space $X$ we define a certain topological characteristic, genus. 
See \cite{Aarnes:ConstructionPaper} for more information about genus $g$ of the space. 
We are particularly interested in spaces with genus 0. One way to
describe the ``$g=0$'' condition is the following: if the union of two open
solid sets in $X$ is the whole space, their intersection must be connected. 
(See \cite{Grubb:IrrPart}.) Intuitively, $X$ does not have holes or loops.
In the case where $X$ is locally path connected, $g=0$ if the fundamental group $\pi_1(X)$ is finite (in particular, if $X$ is 
simply connected). Knudsen \cite{Knudsen} was able to show that if 
$H^1(X) = 0 $ then $g(X) = 0$, and in the case of CW-complexes the converse also holds.

The following two remarks for a compact space follow from results in \cite{Aarnes:ConstructionPaper}:

\begin{remark} \label{genconn}
$g(X) =0$  if and only if  $X \sm \bigsqcup\limits_{i=1}^n  C_i$ is connected
for any finite disjoint family $\{C_i\}_{i=1}^n$ of closed solid sets.  
\end{remark}

\begin{remark} \label{ReIrrPart}
If there is only one open (closed) solid set  in a solid partition of $X$ 
(i.e. a partition of $X$ into a union of disjoint sets each of which is open solid or closed solid),  
then there is  only one closed  (open) solid set in this partition.
\end{remark}

\begin{remark}
When $X$ is compact, a solid-set function on $X$ extends in a unique way to a topological measure on $X$.
For precise definitions and extension procedure see \cite{Aarnes:ConstructionPaper}.
\end{remark}

The majority of existing examples of topological measures on compact spaces are given for spaces with genus 0. 
Here is one:

\begin{example}  [Aarnes circle measure] \label{Aatm}
Let $X$ be the unit square and $B$ be the boundary of $X.$
Fix a point $p$ in $X \setminus B$.
Define $\mu $ on solid sets as follows:
$\mu (A) = 1$ if i) $B \subset A$ or
ii) $ p \in A $ and $A \cap B \ne  \O$.
Otherwise, we let $ \mu(A) = 0 $.
Then 
$ \mu $ is a solid set function and, hence,
extends to a topological measure on $X$.
Note that $\mu$ is not a point mass. 
To demonstrate that $\mu $ is 
not a measure we shall show that $\mu$ is not
subadditive. 
Let $A_1$ be a closed solid set consisting of
two adjacent sides of $B$, 
$A_2$ be a closed solid set that is the other two 
adjacent sides of $B$, and $A_3 = X \setminus B$ be
an open solid subset of $X$. Then 
$X =  A_1 \cup A_2 \cup A_3, \  \mu(X) = 1 $, but 
$ \  \mu (A_1) + \mu(A_2) + \mu(A_3) = 0$.
\end{example}

The reason  that the majority of existing examples of topological measures on compact spaces are given for the 
spaces with genus 0 is 
the following. To obtain a topological measure it is enough to define a solid-set function.
When a space has genus 0, in the definition of a solid-set function
the hardest condition to verify, the irreducible partition condition, becomes easy to verify. 
When $X$ is locally compact,  the hardest condition in Definiton  \ref{DeSSFLC} 
to verify is the condition \ref{solidparti} that deals with solid partitions. 
But, as we shall see in this section, it turns out that this condition holds trivially for spaces 
whose one-point compactification has genus $0$.

In this section we denote by $\hat X$ the  one-point compactification of $X$.

\begin{lemma} \label{hatXsoli}
Let $X$ be locally compact and $\hat X$ be its one-point compactification.
If $A \in \basx$ then $A$ is solid in $\hat X$.
\end{lemma}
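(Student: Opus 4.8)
The plan is to verify the two defining properties of a solid set in the compact space $\hat X$: that $A$ is connected in $\hat X$, and that its complement $\hat X \sm A$ is connected in $\hat X$. Since $X$ is an open subset of $\hat X$ and $A \se X$, the subspace topology that $A$ inherits from $\hat X$ agrees with the one it inherits from $X$; hence $A$, being connected in $X$, is connected in $\hat X$. This disposes of the first property immediately, and it reduces the whole lemma to the connectedness of the complement.

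For the complement, I would first record the structure of $X \sm A$. Because $A \in \basx$ is a bounded solid set, Remark \ref{ReFinNoComp} (together with Lemma \ref{LeNoUnbdComp}) gives a decomposition $X \sm A = \bsc_{i=1}^n E_i$ into finitely many connected components $E_i$, each of which is unbounded in $X$. Since $\hat X \sm A = \{\infty\} \sc \bsc_{i=1}^n E_i$, it suffices to show that adjoining $\infty$ glues these components into a single connected set.

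The key step is to show that $\infty$ lies in the closure, taken in $\hat X$, of each $E_i$. A basic neighborhood of $\infty$ has the form $\{\infty\} \cup (X \sm C)$ with $C$ compact. If such a neighborhood missed $E_i$, we would have $E_i \se C$, and then $\cl{E_i} \se C$ would be a closed subset of a compact set, hence compact, contradicting the unboundedness of $E_i$. Thus every neighborhood of $\infty$ meets $E_i$, i.e. $\infty$ is a limit point of $E_i$ in $\hat X$. Consequently each $E_i \cup \{\infty\}$ is connected, since a connected set together with one of its limit points is connected; and because all these sets share the common point $\infty$, their union $\hat X \sm A = \bigcup_{i=1}^n (E_i \cup \{\infty\})$ is connected. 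Having shown both $A$ and $\hat X \sm A$ connected, $A$ is solid in $\hat X$.

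I expect the only real subtlety to be this neighborhood-basis argument for $\infty$: one must use precisely that the $E_i$ are unbounded (equivalently, have non-compact closure) to conclude that $\infty$ is a limit point, and that there are only finitely many components—both of which are already guaranteed by Remark \ref{ReFinNoComp}. The degenerate case $X \sm A = \O$ can occur only when $X$ itself is compact and $A = X$, and it is handled trivially, since then $\hat X \sm A = \{\infty\}$ is a single point and hence connected.
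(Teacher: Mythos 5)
Your proof is correct and takes essentially the same route as the paper: connectedness of $A$ passes to $\hat X$, and the complement $\hat X \setminus A$ is connected because each unbounded component of $X \setminus A$ together with $\infty$ is connected and all these sets share the point $\infty$; you additionally fill in the detail the paper dismisses as ``easy to see,'' namely that unboundedness forces $\infty$ to be a limit point of each component. One small caveat: the finiteness of the number of components, which you justify via Remark \ref{ReFinNoComp} and Lemma \ref{LeNoUnbdComp}, is not available here since those results assume local connectedness while this lemma assumes only local compactness---but finiteness is also unnecessary, as an arbitrary union of connected sets with the common point $\infty$ is connected.
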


\begin{proof}
Since $A $ is connected in $X$, it is also is connected in $\hat X$.
Let $X \sm A = \bsc_{i=1}^n B_i$ be the decomposition into connected components.
Each $B_i$ is an unbounded subset of $X$. 
We can write $\hat X \sm A = \bc_{i=1}^n E_i$ where each $E_i = B_i \cup \{ \infty\}$.
It is easy to see that each $E_i$ is connected in $\hat X$.  Thus,  $\hat X \sm A$ is connected,
and so  $A$ is solid in $\hat X$.
\end{proof}

\begin{lemma} \label{nosopart}
Let $X$ be a locally compact space whose one-point compactification $\hat X$ has genus 0. 
If $A \in \basx$ then any solid partition of $A$ is the set $A$ itself.
\end{lemma}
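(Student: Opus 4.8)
The plan is to push the partition up to the one-point compactification $\hat X$, where the genus~0 hypothesis becomes usable through Remark \ref{genconn}, and then to count the open and the closed pieces of the resulting partition of $\hat X$.

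First I would fix a solid partition $A = \bsc_{i=1}^n A_i$ with each $A_i \in \basx$ and record the type of each piece inside $\hat X$. By Lemma \ref{hatXsoli} every $A_i$ is solid in $\hat X$; moreover $A_i$ is \emph{open} in $\hat X$ when $A_i$ is bounded open in $X$ (because $X$ is open in $\hat X$), and \emph{closed} in $\hat X$ when $A_i$ is compact (a compact subset of the Hausdorff space $\hat X$). Thus each $A_i$ is an open solid or a closed solid subset of $\hat X$. Next I would close up the partition by adjoining $A_0 = \hat X \sm A$. Since $A \in \basx$ is itself solid in $\hat X$ by Lemma \ref{hatXsoli}, the set $A_0$ is connected, and $\hat X \sm A_0 = A$ is connected as well, so $A_0$ is solid in $\hat X$. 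Because $A$ is bounded we have $\infty \in A_0$, so $A_0 \ne \O$; and $A_0$ is open in $\hat X$ when $A$ is compact and closed when $A$ is bounded open. Consequently
$$ \hat X = A_0 \sc A_1 \sc \cdots \sc A_n $$
is a solid partition of the compact connected space $\hat X$ into nonempty open solid and closed solid sets.

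The core of the argument is to bound the number of pieces by two. Write $O$ for the union of the open pieces and $F$ for the union of the closed pieces. Since $F$ is a finite disjoint union of closed solid sets, Remark \ref{genconn} shows that $O = \hat X \sm F$ is connected; but $O$ is a disjoint union of sets open in $\hat X$, and a connected such union forces at most one nonempty open piece. Using this, $F = \hat X \sm O$ is also connected — it is all of $\hat X$ if there is no open piece, and it is $\hat X \sm O_1$ for the single open solid piece $O_1$ otherwise, the latter being connected because $O_1$ is solid — and as a disjoint union of finitely many sets closed in $\hat X$ it likewise has at most one nonempty piece. Hence the partition of $\hat X$ has at most one open and at most one closed piece, so $n+1 \le 2$. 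Since $n \ge 1$, we conclude $n = 1$ and $A = A_1$, i.e. the only solid partition of $A$ is the trivial one.

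The step I expect to be the main obstacle is the careful handling of the auxiliary piece $A_0$: one must check simultaneously that it is genuinely solid in $\hat X$ (connectedness of both $A_0$ and of its complement $A$) and that its open/closed type in $\hat X$ is dual to the type of $A$ in $X$, so that $\hat X = A_0 \sc A_1 \sc \cdots \sc A_n$ really is a legitimate solid partition to which Remark \ref{genconn} applies. The order of the two counting steps also matters: the bound on closed pieces relies on having already established that there is at most one open piece.
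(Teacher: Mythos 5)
Your proof is correct, and it shares the paper's skeleton: lift everything to $\hat X$ via Lemma \ref{hatXsoli}, complete the partition with $A_0 = \hat X \sm A$, and use Remark \ref{genconn} to conclude that the union of the open pieces is connected, hence contains at most one nonempty member. Where you genuinely differ is in how the closed pieces are eliminated. The paper argues in two cases: for $A \in \bosx$ it first gets one open piece as you do, but then invokes Remark \ref{ReIrrPart} (the irreducible-partition fact from \cite{Aarnes:ConstructionPaper}) to conclude that the resulting solid partition of $\hat X$, having only one open solid set, has only one closed solid set, which must be $\hat X \sm A$, so all compact pieces are empty; for $A \in \bcsx$ it notes that $\hat X \sm A$ is open and nonempty, so Remark \ref{genconn} forces all open pieces to vanish, and connectivity of $A$ finishes. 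Your argument is uniform in the two cases and never uses Remark \ref{ReIrrPart}: once there is at most one nonempty open piece $O_1$, the set $F = \hat X \sm O_1$ is connected simply because $O_1$ is solid in $\hat X$ (and $F = \hat X$ is connected when there is no open piece, genus $0$ giving connectedness of $\hat X$), and a connected set cannot be split into two or more nonempty pieces each closed in $\hat X$, since each such piece is relatively clopen in $F$. This makes your proof slightly more self-contained --- the only genus-0 input is Remark \ref{genconn}, the rest being elementary connectivity --- and it avoids the case analysis; the paper's route stays closer to the irreducible-partition machinery of \cite{Aarnes:ConstructionPaper} that the surrounding section is already quoting. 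One convention to make explicit: your count $n+1 \le 2$ tacitly assumes each $A_i$ is nonempty; if empty pieces are permitted in a partition, the same argument shows at most one $A_i$ is nonempty, which is again exactly the assertion of the lemma.
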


\begin{proof} 
Suppose first that $V \in \bosx$ and its solid partition is given by  
\[ V = \bsc_{i=1}^n C_i \sc \bsc_{j=1}^m U_j \]
where each $C_i \in \bcsx$ and each $U_j \in \bosx$. From Lemma \ref{hatXsoli}
it follows that $\hat X \sm V$ and each $C_i$ are closed solid sets in $\hat X$.
Since $\hat X$ has genus $0$, by Remark \ref{genconn}  
\[ \hat X \sm ((\hat X \sm V) \sc \bsc_{i=1}^n C_i)  = \bsc_{j=1}^m U_j \]
must be connected in $\hat X$. It follows that $m=1$ and we may write 
\[ V = \bsc_{i=1}^n C_i \sc  U_1. \]
Then $\{ U_1, \hat X \sm V, C_1, \ldots, C_n\} $ is a solid partition of $\hat X$, and it has 
only one open set. By Remark \ref{ReIrrPart} this solid partition also has only one 
closed set in it,  and it must be $\hat X \sm V$. So each $C_i= \O$, and the solid partition of $V$ is
$V = U_1$, i.e. the set itself.

Now suppose that $C \in \bcsx$ and its solid partition is given by  
\[ C = \bsc_{i=1}^n C_i \sc \bsc_{j=1}^m U_j \]
where each $C_i \in \bcsx$ and each $U_j \in \bosx$. 
Then $\{ \hat X \sm C, U_1, \ldots, U_m, C_1, \ldots, C_n\}$ is a solid partition of $\hat X$.
Again by Remark \ref{genconn}
\[ \hat X \sm \bsc_{i=1}^n C_i =  (\hat X \sm C) \sc U_1 \ldots \sc U_m \]
must be connected in $\hat X$. It follows that $U_j = \O$ for $j=1, \ldots, m$.
Then by connectivity of $C$ we see that the solid partition of $C$ must be the set itself.
\end{proof}

\begin{remark} \label{easyRn} 
From Lemma \ref{nosopart}
it follows that for any locally compact space whose one-point compactification has genus 0 
the last condition of Definition \ref{DeSSFLC} holds trivially. This is true, for example, 
for $X = \r^n $, half-plane in $\r^n$ with $n \ge 2$, or for a punctured ball in $\r^n$ with the relative topology.
\end{remark}

\begin{example}
Lemma \ref{nosopart} may not be true for spaces whose  one-point compactification 
has genus greater than $0$.  For example, let $X$ be an infinite strip  $\r \umn [0,1]$ without the ball 
of radius $1/4$ centered at $(-1/2, 1/2)$,
so  $\hat X$ has genus greater than $0$.   
It is easy to give an example of  a solid partition of a bounded solid set 
(say, rectangle $[0,n] \umn [0,1]$ or $(0,n) \umn [0,1]$) which consists of $n$ solid sets 
(rectangles of the type $(i, i+1) \umn [0,1]$ or  $[i, i+1] \umn [0,1]$) for any given odd $n \in \N, \, n >1$.
\end{example}

We are ready to give examples of topological measures on locally compact spaces. 

\begin{example} \label{ExDan2pt}
Let $X$ be a locally compact space whose one-point compactification has genus 0. 
Let $\la$ be a real-valued topological measure on $X$ 
(or, more generally, a real-valued deficient topological measure on $X$; 
for definition and properties of deficient topological measures on locally compact spaces see \cite{Butler:DTMLC}). 
Let $P$ be a set of two distinct points. 
For each $A  \in \basx$ let $ \nu(A) = 0$ if $\sharp A = 0$,  $ \nu(A) = \la(A) $ if $\sharp A = 1$, and 
$ \nu(A) = 2 \la(X)$ if $\sharp A = 2$,
where $ \sharp A$ is the number of points in $ A \cap P$.
We claim that $\nu$ is a solid set function. 
By Remark \ref{easyRn} we only need to check the first three conditions of 
Definition \ref{DeSSFLC}. The first one is easy to see. 
Using Lemma \ref{LeCsInside} and Lemma \ref{opensolid} it is easy to verify conditions \ref{regul} and \ref{regulo} 
of Definition \ref{DeSSFLC}. 
The solid set function $\nu$ extends to a unique finite topological measure on $X$. 
Suppose, for example, that  $ \la$ is the Lebesgue measure on $X = \r^2$, the set $P$ consists of two points
$p_1 = (0,0)$ and $p_2 = (2,0)$. Let $K_i$ be the closed ball of radius $1$ centered at $p_i$ for $i=1,2$. Then 
$K_1, K_2$ and $ C= K_1 \cup K_2$ are compact solid sets, $\nu(K_1) = \nu(K_2) = \pi, \,  \nu(C) = 4 \pi$. Since 
$\nu$ is not subadditive, $\nu$ is a topological measure that is not a measure.
\end{example}

The next two examples are adapted from Example 2.2 in \cite{Aarnes:LC} and are related to Example \ref{Aatm}.

\begin{example} \label{puncdisk}
Let $X$ be the unit disk on the plane with removed origin. $X$ 
is a locally compact  Hausdorff space with respect to the relative topology. 
Any subset of $X$ whose closure in $\r^2$ contains the origin is unbounded in $X$.
For $A \in \basx$ (since $A$ is also solid subset of the unit disk by Lemma \ref{hatXsoli})  we define 
$\mu' (A) = \mu(A)$ where $\mu$ is the solid set function on the unit disk 
from Example \ref{Aatm}.  From Remark \ref{easyRn}, Lemma \ref{LeCsInside}, Lemma \ref{opensolid} 
and the fact that $\mu$ is a solid set function
on $\hat X$  we see that $\mu'$ is a solid-set function on $X$. By Theorem \ref{extThLC}
$\mu'$ extends uniquely to a topological measure on $X$, which we also call $\mu'$.  
Note that $\mu'$ is simple. We claim that $\mu'$ 
is not a measure.  Let $U_1 = \{ z \in X:  \ Im \ z > 0\}, \ U_2 =   \{ z \in X:  \ Im \ z < 0\}$ and 
$F =  \{ z \in X:  \ Im \ z = 0\}$. Then $U_1, U_2$ 
are open (unbounded) in $X$  and $F$ is a closed (unbounded) set in $X$ consisting of 
two disjoint segments.  Note that $X= F \cup U_1 \cup U_2$. 
Using Remark \ref{extsumme} we calculate $\mu'(F) = \mu'(U_1) = \mu'(U_2) =0$.
The boundary of the disk, $C$, is a compact connected set, $X \sm C$ is unbounded in $X$, so $C \in \ksx$. 
Since $\mu'(C) = 1$, we have $\mu'(X) =1$. Thus, $\mu'$ is not subadditive, so it is not a measure. 

This example also shows that on a locally compact space
finite additivity of topological measures holds on $\kx \cup \ox$ by 
Definition \ref{DeTMLC}, but fails on $\cx \cup \ox$. This is in contrast to topological measures
on compact spaces, where finite additivity holds on $\cx \cup \ox$. 
\end{example}

\begin{example} \label{linetm}
Let $X = \r^2, \ l$ be a straight line and $p$ a point of $X$ not on the line $l$. 
For $A \in \basx$  define $\mu(A) = 1$ if $A \cap l \neq \O$ and $p \in A$; otherwise,
let $\mu(A) =0$.  Using Lemma \ref{LeCsInside} and Lemma \ref{opensolid} 
it is easy to verify the first three conditions of Definition \ref{DeSSFLC}.
From Remark \ref{easyRn} it follows that $\mu$ is a solid set function on $X$. 
By Theorem \ref{extThLC} $\mu$ extends uniquely to a topological measure on $X$, 
which we also call $\mu$.  Note that $\mu$ is simple. We claim that $\mu$ is not a measure. 
Let $F$ be the  closed half-plane determined by $l$ which does not contain $p$.  
Then using Remark \ref{extsumme} we calculate $\mu(F) = \mu(X \sm F) = 0$, 
and $\mu(X) = 1$. Failure of subadditivity shows that $\mu$ is not a measure.

The sets $F$ and $X \sm F$ are both unbounded.  Now take a bounded open disk 
$V$ around  $p$ that does not intersect $l$. Then 
\[ X = V \sc (X \sm V), \]\
where $V \in \bo(X), \ \mu(V) = \mu(X \sm V) = 0$, while $\mu(X) =1$.

This example also shows that on a locally compact space
finite additivity of topological measures holds on $\kx \cup \ox$ by 
Definition \ref{DeTMLC}, but fails on $\cx \cup \ox$. 
It fails even in the situation $X = V \sc F$, where $ V $ is a bounded open set, and $F$ is a closed set.
\end{example}

The last two examples suggest that having a topological measure on $\hat X$
helps us to get a topological measure on $X$. In fact, we have the following result.

\begin{theorem} \label{tmXtoXha}
Let $X$ be a locally compact, connected, locally connected space whose one-point compactification $\hat X$ has genus 0. 
Suppose $\nu$ is a solid set function on $\hat X$.  For $A \in \basx$  define $\mu(A) = \nu(A)$. 
Then $\mu$ is a solid set function on $X$  and, thus, extends uniquely to a topological 
measure on $X$.
\end{theorem}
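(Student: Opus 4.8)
The plan is to verify directly that $\mu = \nu|_{\basx}$ satisfies the four conditions \ref{superadd}--\ref{solidparti} of Definition \ref{DeSSFLC}, and then to invoke Theorem \ref{extThLC}. As $\nu$ takes values in $[0,\infty)$, its restriction $\mu$ is automatically real-valued, so only the four axioms require attention. The organizing device is a dictionary between the solid sets of $\hat X$ lying inside $X$ and the bounded solid sets of $X$. By Lemma \ref{hatXsoli}, every $A \in \basx$ is solid in $\hat X$; since $X$ is open in $\hat X$, a compact solid set of $X$ becomes a closed solid set of $\hat X$, and a bounded open solid set of $X$ becomes an open solid set of $\hat X$. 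Conversely, I would check that a closed solid set $C$ of $\hat X$ with $C \se X$ is compact solid in $X$: it is compact, being a closed subset of $\hat X$ that misses $\infty$; and no component $W$ of $X \sm C$ can be bounded, since such a $W$ would be a nonempty clopen subset of the connected set $\hat X \sm C$ (open there because $X \sm C$ is open in $\hat X \sm C$, and closed there because its $\hat X$-closure equals its compact $X$-closure, which omits $\infty$), forcing $W = \hat X \sm C \ni \infty$, a contradiction.

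With this dictionary, condition \ref{superadd} is immediate: if $\bsc_{i=1}^n C_i \se C$ with $C, C_i \in \bcsx$, all of these are closed solid in $\hat X$, and superadditivity of $\nu$ on $\hat X$ gives $\sum_i \mu(C_i) = \sum_i \nu(C_i) \le \nu(C) = \mu(C)$. The case $n=1$ records monotonicity of $\mu$ among compact solid sets; additionally, the regularity of $\nu$ on $\hat X$ shows $\mu(K) \le \mu(U)$ whenever $K \in \bcsx$, $U \in \bosx$, and $K \se U$ (then $K$ is one of the closed solid competitors in the supremum defining $\nu(U)$), a fact I use below.

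For condition \ref{regul}, fix $U \in \bosx$, which is open solid in $\hat X$. Each $K \in \bcsx$ with $K \se U$ is closed solid in $\hat X$ and contained in $U$, whence $\mu(K) \le \mu(U)$; this gives the inequality ``$\ge$'' in the supremum. For the reverse, use the regularity of $\nu$ on $\hat X$ to choose a closed solid set $C$ of $\hat X$ with $C \se U$ and $\nu(C)$ within $\eps$ of $\nu(U)$; since $C$ is compact in $X$ and $C \se U \in \bosx$, Lemma \ref{LeCsInside} produces $C' \in \bcsx$ with $C \se C' \se U$, and monotonicity of $\nu$ gives $\mu(C') \ge \nu(C)$, so the supremum over $\bcsx$ attains $\mu(U)$. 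Condition \ref{regulo} is the step I expect to demand the most care, because the compactification intrudes genuinely: given $K \in \bcsx$ (closed solid in $\hat X$), the regularity of $\nu$ on $\hat X$ supplies open solid sets $U'$ of $\hat X$ with $K \se U'$ and $\nu(U')$ near $\nu(K)$, but such $U'$ may contain $\infty$ or be unbounded in $X$, hence need not belong to $\bosx$. Here I would apply Lemma \ref{opensolid} to the inclusion $K \se U' \cap X$ (an open subset of $X$ containing the compact solid set $K$) to extract $W \in \bosx$ with $K \se W \se U' \cap X \se U'$; then $\mu(W) = \nu(W) \le \nu(U')$ by monotonicity on open solid sets of $\hat X$, so the infimum over $\bosx$ descends to $\mu(K)$, the opposite inequality being the monotonicity noted above.

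Finally, condition \ref{solidparti} is essentially free: by Lemma \ref{nosopart} (equivalently Remark \ref{easyRn}), the genus-$0$ hypothesis forces every solid partition of a set in $\basx$ to be trivial, so the additivity demanded by \ref{solidparti} holds vacuously. Having checked \ref{superadd}--\ref{solidparti}, I conclude that $\mu$ is a solid set function on $X$, and Theorem \ref{extThLC} extends it uniquely to a compact-finite topological measure. The recurring obstacle is the bookkeeping of the passage between $X$ and $\hat X$ --- guaranteeing that the approximating solid families furnished by the regularity of $\nu$ can be replaced by bounded solid sets of $X$ --- which is precisely what Lemmas \ref{LeCsInside} and \ref{opensolid} accomplish.
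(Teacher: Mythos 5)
Your proof is correct and takes essentially the same approach as the paper: the paper's own (very compressed) proof likewise invokes Lemma \ref{hatXsoli}, Lemma \ref{LeCsInside}, Lemma \ref{opensolid}, the fact that a bounded solid set omits $\infty$, and Remark \ref{easyRn} for condition \ref{solidparti}, declaring the first three conditions of Definition \ref{DeSSFLC} ``easy to verify.'' Your write-up simply supplies those omitted verifications explicitly --- the dictionary between bounded solid sets of $X$ and solid sets of $\hat X$, and the use of the two lemmas to convert the $\hat X$-approximants furnished by the regularity of $\nu$ into bounded solid subsets of $X$.
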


\begin{proof}
Let $A \in \basx$. By Lemma \ref{hatXsoli},  $A$ is a solid set in $\hat X$. 
Using Lemma \ref{LeCsInside}, Lemma \ref{opensolid}, the fact that $\nu$ is a solid set function on $\hat X$, and that 
a bounded solid set does not contain $\infty$ it is easy to verify the first three conditions of Definition \ref{DeSSFLC}.
By Remark \ref{easyRn} $\mu$ is a solid set function on $X$. 
\end{proof}

Theorem \ref{tmXtoXha} allows us to obtain a large variety of topological measures on a locally compact space from
examples of topological measures on compact spaces. 

\begin{example} \label{nvssf}
Let $X$ be a locally compact space whose one-point compactification has genus 0. 
Let $n$ be a natural number. Let $P$ be the set of distinct $2n+1$ points.
For each $A  \in \basx$ let $ \nu(A) = i/n$ if $\sharp A = 2i$ or $2i+1$, where $ \sharp A$ is the number of points in $ A \cap P$.
When $X$ is compact, a set function defined in this way is a solid-set function 
(see Example 2.1 in \cite{Aarnes:Pure}, Examples 4.14 and 4.15 in \cite{QfunctionsEtm}). 
By Theorem \ref{tmXtoXha}  $\nu$ is a solid-set function on $X$; it extends to a unique topological measure on $X$ 
that assumes values $0, 1/n, \ldots, 1$. 
\end{example}

We conclude with an example of another  infinite topological measure.

\begin{example} \label{mojexLC}
Let $X=\r^n$ for any $n \ge 2$, and $\la$ be the Lebesque measure on $X$.
For $U \in \bosx$ define $\mu(U) =0$ if $0 \le \la(U) \le 1$ and $\mu(U) = \la(U)$ if
$\la(U) >1$. For $C \in \bcsx$ define $\mu(C) = 0$ if $0 \le \la(C) < 1$ and 
$\mu(C) =\la(C)$ if $\la(C) \ge 1$.  It is not hard to check the first three conditions 
of Definition \ref{DeSSFLC}. From Remark \ref{easyRn} it follows that $\mu$ is 
a solid set function on $X$. 
By Theorem \ref{extThLC} $\mu$ extends uniquely to a topological measure on $X$, 
which we also call $\mu$.  Note that $\mu(X) = \infty$. $\mu$ is not subadditive, for  
we may cover a compact ball with Lebesque measure greater than 1 by finitely many balls 
of Lebesque measure less than 1. Hence, $\mu$ is not a measure.
\end{example}

{\bf{Acknowledgments}}:
This  work was conducted at the Department of Mathematics at the University of Illinois at Urbana-Champaign and 
the Department of Mathematics at the University of California Santa Barbara. The author would like to thank both departments
for their hospitality and supportive environments.

${   }$ \\
\noindent
Department of Mathematics \\
University of California Santa Barbara \\
552 University Rd.,  \\
Isla Vista, CA 93117, USA \\
e-mail: svbutler@ucsb.edu  \\ 

\end{document}